\newtheorem{definition}{Definition}
\newtheorem{lemma}{Lemma}
\newtheorem{theorem}{Theorem}
\newtheorem{corollary}{Corollary}
\newtheorem{proposition}{Proposition}
\newtheorem{preposition}{Preposition}
\newtheorem{remark}{Remark}
\newcommand{\preref}[1]{Preposition~\ref{#1}}
\newcommand{\eref}[1]{(\ref{#1})}
\newcommand{\sref}[1]{Section~\ref{#1}}
\newcommand{\appref}[1]{Appendix~\ref{#1}}
\newcommand{\fref}[1]{Figure~\ref{#1}}
\newcommand{\dref}[1]{Definition~\ref{#1}}
\newcommand{\cref}[1]{Constraint~\ref{#1}}
\newcommand{\thref}[1]{Theorem~\ref{#1}}
\newcommand{\corref}[1]{Corollary~\ref{#1}}
\newcommand{\lref}[1]{Lemma~\ref{#1}}
\newcommand{\rref}[1]{Remark~\ref{#1}}
\newcommand{\tref}[1]{Table~\ref{#1}}
\newcommand{\algref}[1]{Algorithm~\ref{#1}}
\newcommand{\ignore}[1]{}
\begin{document}

\title{\vspace{-.5cm}Manifold Optimization Over the Set of Doubly Stochastic Matrices: A Second-Order Geometry}

\author{
   \IEEEauthorblockN{Ahmed Douik, \textit{Student Member, IEEE} and Babak Hassibi, \textit{Member, IEEE}\vspace{-.8cm}}

\thanks {
Ahmed Douik and Babak Hassibi are with the Department of Electrical Engineering, California Institute of Technology, Pasadena, CA 91125 USA (e-mail: \{ahmed.douik,hassibi\}@caltech.edu).
}
}

\maketitle

\begin{abstract}
Convex optimization is a well-established research area with applications in almost all fields. Over the decades, multiple approaches have been proposed to solve convex programs. The development of interior-point methods allowed solving a more general set of convex programs known as semi-definite programs and second-order cone programs. However, it has been established that these methods are excessively slow for high dimensions, i.e., they suffer from the curse of dimensionality. On the other hand, optimization algorithms on manifold have shown great ability in finding solutions to nonconvex problems in reasonable time. This paper is interested in solving a subset of convex optimization using a different approach. The main idea behind Riemannian optimization is to view the constrained optimization problem as an unconstrained one over a restricted search space. The paper introduces three manifolds to solve convex programs under particular box constraints. The manifolds, called the doubly stochastic, symmetric and the definite multinomial manifolds, generalize the simplex also known as the multinomial manifold. The proposed manifolds and algorithms are well-adapted to solving convex programs in which the variable of interest is a multidimensional probability distribution function. Theoretical analysis and simulation results testify the efficiency of the proposed method over state of the art methods. In particular, they reveal that the proposed framework outperforms conventional generic and specialized solvers, especially in high dimensions.
\end{abstract}

\begin{IEEEkeywords}
Riemannian manifolds, symmetric doubly stochastic matrices, positive matrices, convex optimization.
\end{IEEEkeywords}

\section{Introduction} \label{sec:int}

Numerical optimization is the foundation of various engineering and computational sciences. Consider a mapping $f$ from a subset $\mathds{D}$ of $\mathds{R}^n$ to $\mathds{R}$. The goal of the optimization algorithms is to find an extreme point $x^* \in \mathds{D}$ such that $f(x^*) \leq f(y)$ for all point $y \in \mathcal{N}_{x^*}$ in the neighborhood of $x^*$. Unconstrained Euclidean\footnote{The traditional optimization schemes are identified with the word Euclidean in contrast with the Riemannian algorithm in the rest of the paper.} optimization refers to the setup in which the domain of the objective function is the whole space, i.e., $\mathds{D}=\mathds{R}^n$. On the other hand, constrained Euclidean optimization denotes optimization problem in which the search set is constrained, i.e., $\mathds{D}\subsetneq \mathds{R}^n$. 

Convex optimization is a special case of constrained optimization problems in which both the objective function and the search set are convex. Historically initiated with the study of least-squares and linear programming problems, convex optimization plays a crucial role in optimization algorithm thanks to the desirable convergence property it exhibits. The development of interior-point methods allowed solving a more general set of convex programs known as semi-definite programs and second-order cone programs. A summary of convex optimization methods and performance analysis can be found in the seminal book \cite{993483}.

Another important property of convex optimization is that the interior of the search space can be identified with a manifold that is embedded in a higher-dimensional Euclidean space. Using advanced tools to solve the constrained optimization, e.g., \cite{scheithauerjorge}, requires solving on the high dimension space which can be excessively slow. Riemannian optimization takes advantage of the fact that the manifold is of lower dimension and exploits its underlying geometric structure. The optimization problem is reformulated from a constrained Euclidean optimization into an unconstrained optimization over a restricted search space, a.k.a., a Riemannian manifold. 

Thanks to the aforementioned low-dimension feature, optimization over Riemannian manifolds is expected to perform more efficiently \cite{11082885}. Therefore, a large body of literature dedicated to adapting traditional Euclidean optimization methods and their convergence properties to Riemannian manifolds. 

This paper introduces a framework for solving optimization problems in which the optimization variable is a doubly stochastic matrix. Such framework is particularly interesting for clustering applications. In such problems ,e.g., \cite{7541267,Yang:2012:CLD:3042573.3042666,Wang:2016,zass2006doubly}, one wishes to recover the structure of a graph given a similarity matrix. The recovery is performed by minimizing a predefined cost function under the constraint that the optimization variable is a doubly stochastic matrix. This work provides a unified framework to carry such optimization.

\subsection{State of the Art}

Optimization algorithms on Riemannian manifolds appeared in the optimization literature as early as the 1970's with the work of Luenberger \cite{5485452620} wherein the standard Newton's optimization method has been adapted to problems on manifolds. A decade later, Gabay \cite{Gabay1982} introduces the steepest descent and the quasi-Newton algorithm on embedded submanifolds of $\mathds{R}^n$. The work investigates the global and local convergence properties of both the steepest descent and the Newton's method. The analysis of the steepest descent and the Newton algorithm is extended in \cite{165579,udriste1994convex} to Riemannian manifolds. By using exact line search, the authors concluded the convergence of their proposed algorithms. The assumption is relaxed in \cite{Yang2007} wherein the author provides convergence rate and guarantees for the steepest descent and Newton's method for Armijo step-size control.

The above-mentioned works substitute the concept of the line search in Euclidian algorithms by searching along a geodesic which generalizes the idea of a straight line. While the method is natural and intuitive, it might not be practical. Indeed, finding the expression of the geodesic requires computing the exponential map which may be as complicated as solving the original optimization problem \cite{AbsMahSep2008}. To overcome the problem, the authors in \cite{1399106} suggest approximating the exponential map up to a given order, called a retraction, and show quadratic convergence for Newton's method under such setup. The work initiated more sophisticated optimization algorithm such as the trust region methods \cite{BakAbsGal2008,baker2008riemannian,absil2007trust,yootrust,11082885}. Analysis of the convergence of first and second order methods on Riemannian manifolds, e.g., gradient and conjugate gradient descent, Newton's method, and trust region methods, using general retractions are summarized in \cite{AbsMahSep2008}. 

Thanks to the theoretical convergence guarantees mentioned above, the optimization algorithms on Riemannian manifolds are gradually gaining momentum in the optimization field \cite{11082885}. Several successful algorithms have been proposed to solve non-convex problems, e.g., the low-rank matrix completion \cite{20111254,445872013,99874582015}, online learning \cite{2188399}, clustering \cite{14210041033,104692046925} and tensor decomposition \cite{7182334}. It is worth mentioning that these works modify the optimization algorithm by using a general connection instead of the genuine parallel vector transport to move from a tangent space to the other while computing the (approximate) Hessian. Such approach conserves the global convergence of the quasi-Newton scheme but no longer ensures their superlinear convergence behavior \cite{6515551}.

Despite the advantages cited above, the use of optimization algorithms on manifolds is relatively limited. This is mainly due to the lack of a systematic mechanism to turn a constrained optimization problem into an optimization over a manifold provided that the search space forms a manifold, e.g., convex optimization. Such reformulation, usually requiring some level of understanding of differential geometry and Riemannian manifolds, is prohibitively complex for regular use. This paper addresses the problem by introducing new manifolds that allow solving a non-negligible class of optimization problem in which the variable of interest can be identified with a multidimensional probability distribution function.

\subsection{Contributions}

In \cite{7182334}, in a context of tensor decomposition, the authors propose a framework to optimize functions in which the variable are stochastic matrices. This paper proposes extending the results to a more general class of manifolds by proposing a framework for solving a subset of convex programs including those in which the optimization variable represents a doubly stochastic and possibly symmetric and/or definite multidimensional probability distribution function. To this end, the paper introduces three manifolds which generalize the multinomial manifold. While the multinomial manifold allows representing only stochastic matrices, the proposed ones characterize doubly stochastic, symmetric and definite arrays, respectively. Therefore, the proposed framework allows solving a subset of convex programs. To the best of the author's knowledge, the proposed manifolds have not been introduced or studied in the literature.

The first part of the manuscript introduces all relevant concepts of the Riemannian geometry and provides insights on the optimization algorithms on such manifolds. In an effort to make the content of this document accessible to a larger audience, it does not assume any prerequisite on differential geometry. As a result, the definitions, concepts, and results in this paper are tailored to the manifold of interest and may not be applicable for abstract manifolds. 

The paper investigates the first and second order Riemannian geometry of the proposed manifolds endowed with the Fisher information metric which guarantees that the manifolds have a differentiable structure. For each manifold, the tangent space, Riemannian gradient, Hessian, and retraction are derived. With the aforementioned expressions, the manuscript formulates first and a second order optimization algorithms and characterizes their complexity. Simulation results are provided to further illustrate the efficiency of the proposed method against state of the art algorithms.

The rest of the manuscript is organized as follows: \sref{sec:opt} introduces the optimization algorithms on manifolds and lists the problems of interest in this paper. In \sref{sec:the}, the doubly stochastic manifold is introduced and its first and second order geometry derived. \sref{sec:the2} iterate a similar study to a particular case of doubly stochastic matrices known as the symmetric manifold. The study is extended to the definite symmetric manifold in \sref{sec:ext}. \sref{sec:opt2} suggests first and second order algorithms and analyze their complexity. Finally, before concluding in \sref{sec:con}, the simulation results are plotted and discussed in \sref{sec:sim}.

\section{Optimization on Riemannian Manifolds} \label{sec:opt}

This section introduces the numerical optimization methods on smooth matrix manifolds. The first part introduces the Riemannian manifold notations and operations. The second part extends the first and second order Euclidean optimization algorithm to the Riemannian manifolds and introduces the necessary machinery. Finally, the problems of interest in this paper are provided and the different manifolds identified. 

\subsection{Manifold Notation and Operations}

The study of optimization algorithms on smooth manifolds engaged a significant attention in the previous years. However, such studies require some level of knowledge of differential geometry. In this paper, only smooth embedded matrix manifolds are considered. Hence, the definitions and theorems may not apply to abstract manifolds. In addition, the authors opted for a coordinate free analysis omitting the chart and the differentiable structure of the manifold. For an introduction to differential geometry, abstract manifold, and Riemannian manifolds, we refer the readers to the following references \cite{543483584,54514584,9986155}, respectively.

An embedded matrix manifold $\mathcal{M}$ is a smooth subset of a vector space $\mathcal{E}$ included in the set of matrices $\mathds{R}^{n \times m}$. The set $\mathcal{E}$ is called the ambient or the embedding space. By smooth subset, we mean that the $\mathcal{M}$ can be mapped by a bijective function, i.e., a chart, to an open subset of $\mathds{R}^{d}$ where $d$ is called the dimension of the manifold. The dimension $d$ can be thought of as the \emph{degree of freedom} of the manifold. In particular, a vector space $\mathcal{E}$ is a manifold.

In the same line of though of approximating a function locally by its derivative, a manifold $\mathcal{M}$ of dimension $d$ can be approximated locally at a point ${\mathbf{X}}$ by a $d$-dimensional vector space $\mathcal{T}_{\mathbf{X}}\mathcal{M}$ generated by taking derivatives of all smooth curves going through ${\mathbf{X}}$. Formally, let $\gamma(t): \mathcal{I} \subset \mathds{R} \longrightarrow \mathcal{M}$ be a curve on $\mathcal{M}$ with $\gamma(0)={\mathbf{X}}$. Define the derivative of $\gamma(t)$ at zero as follows:
\begin{align}
\gamma^\prime(0) = \lim_{t \rightarrow 0} \cfrac{\gamma(t)-\gamma(0)}{t}.
\end{align}
The space generated by all $\gamma^\prime(0)$ represents a vector space $\mathcal{T}_{\mathbf{X}}\mathcal{M}$ called the tangent space of $\mathcal{M}$ at ${\mathbf{X}}$. \fref{fig:1} shows an example of a two-dimension tangent space generated by a couple of curves. The tangent space plays a primordial role in the optimization algorithms over manifold in the same way as the derivative of a function plays an important role in Euclidean optimization. The union of all tangent spaces $\mathcal{T}\mathcal{M}$ is referred to as the tangent bundle of $\mathcal{M}$, i.e.,:
\begin{align}
\mathcal{T}\mathcal{M} = \bigcup_{{\mathbf{X}} \in \mathcal{M}} \mathcal{T}_{\mathbf{X}}\mathcal{M}.
\end{align}

\begin{figure}[t]
\centering
\includegraphics[width=1\linewidth]{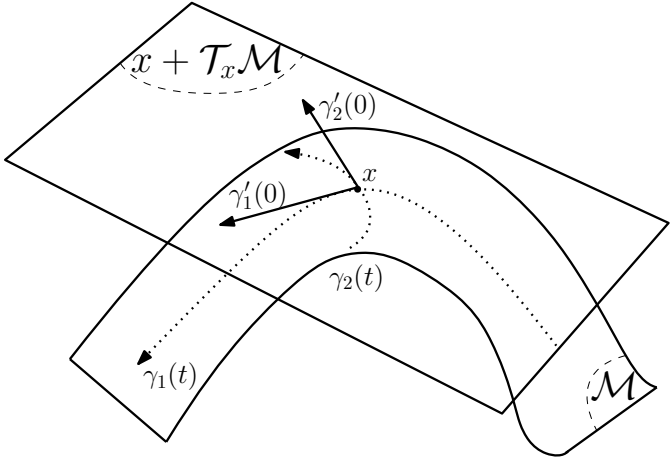}\\
\caption{Tangent space of a 2-dimensional manifold embedded in $\mathds{R}^3$. The tangent space $\mathcal{T}_{\mathbf{X}}\mathcal{M}$ is computed by taking derivatives of the curves going through ${\mathbf{X}}$ at the origin.} \label{fig:1}
\end{figure}

As shown previously, the notion of tangent space generalizes the notion of directional derivative. However, to optimize functions, one needs the notion of directions and lengths which can be achieved by endowing each tangent space $\mathcal{T}_{\mathbf{X}}\mathcal{M}$ by a bilinear, symmetric positive form $\langle.,.\rangle_{\mathbf{X}}$, i.e., an inner product. Let $g:\mathcal{T}\mathcal{M} \times \mathcal{T}\mathcal{M} \longrightarrow \mathds{R}$ be a smoothly varying bilinear form such that its restriction on each tangent space is the previously defined inner product. In other words:
\begin{align}
g(\xi_{\mathbf{X}},\eta_{\mathbf{X}}) =  \langle\xi_{\mathbf{X}},\eta_{\mathbf{X}}\rangle_{\mathbf{X}}, \ \forall \ \xi_{\mathbf{X}},\eta_{\mathbf{X}} \in \mathcal{T}_{\mathbf{X}}\mathcal{M}
\end{align}

Such metric, known as the Riemannian metric, turns the manifold into a Riemannian manifold. Any manifold (in this paper) admits at least a Riemannian metric. Lengths of tangent vectors are naturally induced from the inner product. The norm on the tangent space $\mathcal{T}_{\mathbf{X}}\mathcal{M}$ is denoted by $||.||_{\mathbf{X}}$ and defined by:
\begin{align}
||\xi_{\mathbf{X}}||_{\mathbf{X}} = \sqrt{\langle\xi_{\mathbf{X}},\xi_{\mathbf{X}}\rangle_{\mathbf{X}}} , \ \forall \ \xi_{\mathbf{X}} \in \mathcal{T}_{\mathbf{X}}\mathcal{M}
\end{align}

Both the ambient space and the tangent space being vector spaces, one can define the orthogonal projection $\Pi_{\mathbf{X}}: \mathcal{E} \longrightarrow \mathcal{T}_{\mathbf{X}}\mathcal{M}$ verifying $\Pi_{\mathbf{X}} \circ \Pi_{\mathbf{X}} = \Pi_{\mathbf{X}}$. The projection is said to be orthogonal with respect to the restriction of the Riemannian metric to the tangent space, i.e., $\Pi_{\mathbf{X}}$ is orthogonal in the $\langle.,.\rangle_{\mathbf{X}}$ sens.

\subsection{First and Second Order Algorithms}

The general idea behind unconstrained Euclidean numerical optimization methods is to start with an initial point ${\mathbf{X}}^0$ and to iteratively update it according to certain predefined rules in order to obtain a sequence $\{{\mathbf{X}}^t\}$ which converges to a local minimizes of the objective function. A typical update strategy is the following:
\begin{align}
{\mathbf{X}}^{t+1} = {\mathbf{X}}^t + \alpha^t p^t,
\end{align}
where $\alpha^t$ is the step size and $p^t$ the search direction. Let $\text{Grad }f({\mathbf{X}})$ be the Euclidean gradient\footnote{The expression of the Euclidean gradient (denoted by Grad) is explicitly given to show the analogy with the Riemannian gradient (denoted by grad). The nabla symbol $\nabla$ is not used in the context of gradient as it is reserved for the Riemannian connection. Similar notations are used for the Hessian.} of the objective function defined as the unique vector satisfying:
\begin{align}
\langle \text{Grad }f({\mathbf{X}}), \xi \rangle = \text{D} f({\mathbf{X}}) [\xi],\ \forall \ \xi \in \mathcal{E},
\end{align}
where $\langle.,.\rangle$ is the inner product on the vector space $\mathcal{E}$ and $\text{D} f({\mathbf{X}}) [\xi]$ is the directional derivative of $f$ given by:
\begin{align}
\text{D} f({\mathbf{X}}) [\xi] = \lim_{t \rightarrow 0} \cfrac{f({\mathbf{X}}+t\xi)-f({\mathbf{X}})}{t}
\end{align}

In order to obtain a descent direction, i.e., $f({\mathbf{X}}^{t+1}) < f({\mathbf{X}}^{t})$ for a small enough step size $\alpha^t$, the search direction $p^t$ is chosen in the half space spanned by $-\text{Grad }f({\mathbf{X}})$. In other words, the following expression holds:
\begin{align}
\langle \text{Grad }f({\mathbf{X}}^t), p^t \rangle < 0.
\label{eq:1}
\end{align}
In particular, the choices of the search direction satisfying
\begin{align}
p^t = -\cfrac{\text{Grad }f({\mathbf{X}}^t)}{||\text{Grad }f({\mathbf{X}}^t)||} \label{eq:4} \\
\text{Hess }f({\mathbf{X}}^t)[p^t] = \text{Grad }f({\mathbf{X}}) \label{eq:5}
\end{align}
yield the celebrated steepest descent \eref{eq:4} and the Newton's method \eref{eq:5}, wherein $\text{Hess }f({\mathbf{X}})[\xi]$ is the Euclidean Hessian\footnote{The Euclidean Hessian is seen as an operator to show the connection with the Riemanian Hessian. One can show that the proposed definition matches the ``usual" second order derivative matrix for $\xi=\mathbf{I}$.} of $f$ at ${\mathbf{X}}$ defined as an operator from $\mathcal{E}$ to $\mathcal{E}$ satisfying:
\begin{enumerate}
\item $\langle \text{Hess }f({\mathbf{X}})[\xi],\xi\rangle = \text{D}^2f({\mathbf{X}})[\xi,\xi]= \text{D}(\text{D}f({\mathbf{X}})[\xi])[\xi]$,
\item $\langle \text{Hess }f({\mathbf{X}})[\xi],\eta \rangle = \langle \xi, \text{Hess }f({\mathbf{X}})[\eta]\rangle ,\ \forall \ \xi,\eta \in \mathcal{E}$.
\end{enumerate}

After choosing the search direction, the step size $\alpha^t$ is chosen so as to satisfy the Wolfe conditions for some constant $c_1 \in (0,1)$ and $c_2 \in (c_1,1)$, i.e.,
\begin{enumerate}
\item The Armijo condition: 
\begin{align}
f({\mathbf{X}}^t+\alpha^t p^t) - f({\mathbf{X}}^{t}) \leq c_1 \alpha^t \langle\text{Grad }f({\mathbf{X}}^t),p^t \rangle
\label{eq:2}
\end{align}
\item The curvature condition: 
\begin{align}
\langle\text{Grad }f({\mathbf{X}}^t+\alpha^t p^t),p^t \rangle \geq c_2.
\label{eq:3}
\end{align}
\end{enumerate}

\begin{algorithm}[t!]
\begin{algorithmic}[1]
\REQUIRE Manifold $\mathcal{M}$, function $f$, and retraction $R$.
\STATE Initialize ${\mathbf{X}} \in \mathcal{M}$.
\WHILE {$||\text{grad }f({\mathbf{X}})||_{{\mathbf{X}}} \geq \epsilon$}
\STATE Choose search direction $\xi_{\mathbf{X}} \in \mathcal{T}_{\mathbf{X}}\mathcal{M}$ such that:
\begin{align}
\langle \text{grad }f({\mathbf{X}}), \xi_{\mathbf{X}} \rangle_{\mathbf{X}} <0.
\end{align}
\STATE Compute Armijo step size $\alpha$.
\STATE Retract ${\mathbf{X}} = R_{\mathbf{X}}(\alpha \xi_{\mathbf{X}})$.
\ENDWHILE
\STATE Output ${\mathbf{X}}$.
\end{algorithmic}
\caption{Line-Search Method on Riemannian Manifold}
\label{alg1}
\end{algorithm}

The Riemannian version of the steepest descent, called the line-search algorithm, follows a similar logic as the Euclidean one. The search direction is obtained with respect to the Riemannian gradient which is defined in a similar manner as the Euclidean one with the exception that it uses the Riemannian geometry, i.e.,:
\begin{definition}
The Riemannian gradient of $f$ at ${\mathbf{X}}$ denoted by $\text{grad }f({\mathbf{X}})$ of a manifold $\mathcal{M}$, is defined as the unique vector in $\mathcal{T}_{\mathbf{X}}\mathcal{M}$ that satisfies:
\begin{align}
\langle \text{grad }f({\mathbf{X}}), \xi_{\mathbf{X}} \rangle_{\mathbf{X}} = \text{D} f({\mathbf{X}}) [\xi_{\mathbf{X}}],\ \forall \ \xi_{\mathbf{X}} \in \mathcal{T}_{\mathbf{X}}\mathcal{M}.
\end{align}
\end{definition}

After choosing the search direction as mandated by \eref{eq:1}, the step size is selected according to Wolfe's conditions \emph{similar} to the one in \eref{eq:2} and \eref{eq:3}. A more general definition of a descent direction, known as gradient related sequence, and the Riemannian Armijo step expression can be found in \cite{AbsMahSep2008}. 

While the update step ${\mathbf{X}}^{t+1} = {\mathbf{X}}^t + \alpha^t p^t$ is trivial in the Euclidean optimization thanks to its vector space structure, it might result on a point ${\mathbf{X}}^{t+1}$ outside of the manifold. Moving on a given direction of a tangent space while staying on the manifold is realized by the concept of retraction. The ideal retraction is the exponential map $\text{Exp}_{\mathbf{X}}$ as it maps point a tangent vector $\xi_{\mathbf{X}} \in \mathcal{T}_{\mathbf{X}}\mathcal{M}$ to a point along the geodesic curve (straight line on the manifold) that goes through ${\mathbf{X}}$ in the direction of $\xi_{\mathbf{X}}$. However, computing the geodesic curves is challenging and may be more difficult that the original optimization problem. Luckily, one can use a first-order retraction (called simply retraction in this paper) without compromising the convergence property of the algorithms. A first-order retraction is defined as follows:
\begin{definition}
A retraction on a manifold $\mathcal{M}$ is a smooth mapping $R$ from the tangent bundle $\mathcal{T}\mathcal{M}$ onto $\mathcal{M}$. For all ${\mathbf{X}} \in \mathcal{M}$, the restriction of $R$ to $\mathcal{T}_{\mathbf{X}}\mathcal{M}$, called $R_{\mathbf{X}}$ satisfy the following properties:
\begin{itemize}
\item Centering: $R_{\mathbf{X}}(0)={\mathbf{X}}$.
\item Local rigidity: The curve $\gamma_{\xi_{\mathbf{X}}}(\tau) = R_{\mathbf{X}}(\tau \xi_{\mathbf{X}})$ satisfy $\cfrac{d\gamma_{\xi_{\mathbf{X}}}(\tau)}{d\tau}\Big|_{\tau=0} = \xi_{\mathbf{X}}, \ \forall \ \xi_{\mathbf{X}} \in \mathcal{T}_{\mathbf{X}}\mathcal{M}$.
\end{itemize}
\end{definition} 

\begin{figure}[t]
\centering
\includegraphics[width=0.8\linewidth]{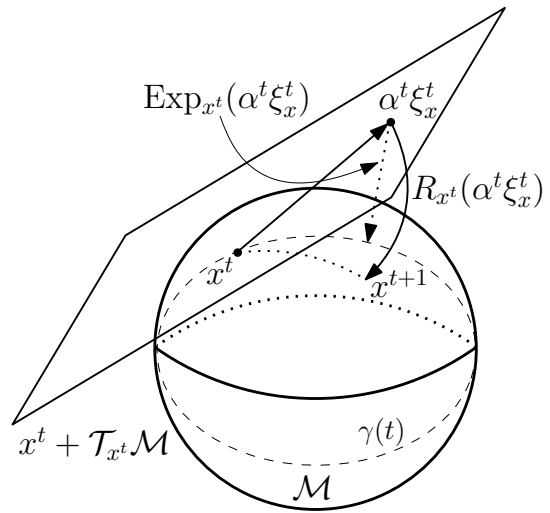}\\
\caption{The update step for the two-dimensional sphere embedded in $\mathds{R}^3$. The update direction $\xi_{\mathbf{X}}^t$ and step length $\alpha^t$ are computed in the tangent space $\mathcal{T}_{{\mathbf{X}}^t}\mathcal{M}$. The point ${\mathbf{X}}^t+\alpha^t\xi_{\mathbf{X}}^t$ lies outside the manifold and needs to be retracted to obtain the update ${\mathbf{X}}^{t+1}$. The update is not located on the geodesic $\gamma(t)$ due to the use of a retraction instead of the exponential map.} \label{fig:2}
\end{figure}

For some predefined Armijo step size, the procedure above is guaranteed to converge for all retractions \cite{AbsMahSep2008}. The generalization of the steepest descent to the Riemannian manifold is obtained by finding the search direction that satisfies similar equation as in the Euclidean scenario \eref{eq:4} using the Riemannian gradient. The update is then retracted to the manifold. The steps of the line-search method can be summarized in \algref{alg1} and an illustration of an iteration of the algorithm is given in \fref{fig:2}.

Generalizing the Newton's method to the Riemannian setting requires computing the Riemannian Hessian operator which requires taking a directional derivative of a vector field. As the vector field belong to different tangent spaces, one needs the notion of connection $\nabla$ that generalizes the notion of directional derivative of a vector field. The notion of connection is intimately related to the notion of vector transport which allows moving from a tangent space to the other as shown in \fref{fig:3}. The definition of a connection is given below:
\begin{definition}
An affine connection $\nabla$ is a mapping from $\mathcal{T}\mathcal{M} \times \mathcal{T}\mathcal{M}$ to $\mathcal{T}\mathcal{M}$ that associate to each $(\eta,\xi)$ the tangent vector $\nabla_\eta \xi$ satisfying for all smooth $f,g: \mathcal{M} \longrightarrow \mathds{R}$, $a,b \in \mathds{R}$:
\begin{itemize}
\item $\nabla_{f(\eta)+g(\chi)}\xi =  f(\nabla_\eta \xi)+ g(\nabla_\chi \xi)$
\item $\nabla_{\eta}(a\xi+b\varphi) = a\nabla_{\eta}\xi+b\nabla_{\eta}\varphi$
\item $\nabla_{\eta}(f(\xi)) = \xi(f) \eta + f(\nabla_\eta \xi)$,
\end{itemize}
wherein the vector field $\xi$ acts on the function $f$ by derivation, i.e., $\xi(f)=\text{D}(f)[\xi]$ also noted as $\xi f$ in the literature.
\end{definition}

\begin{figure}[t]
\centering
\includegraphics[width=0.7\linewidth]{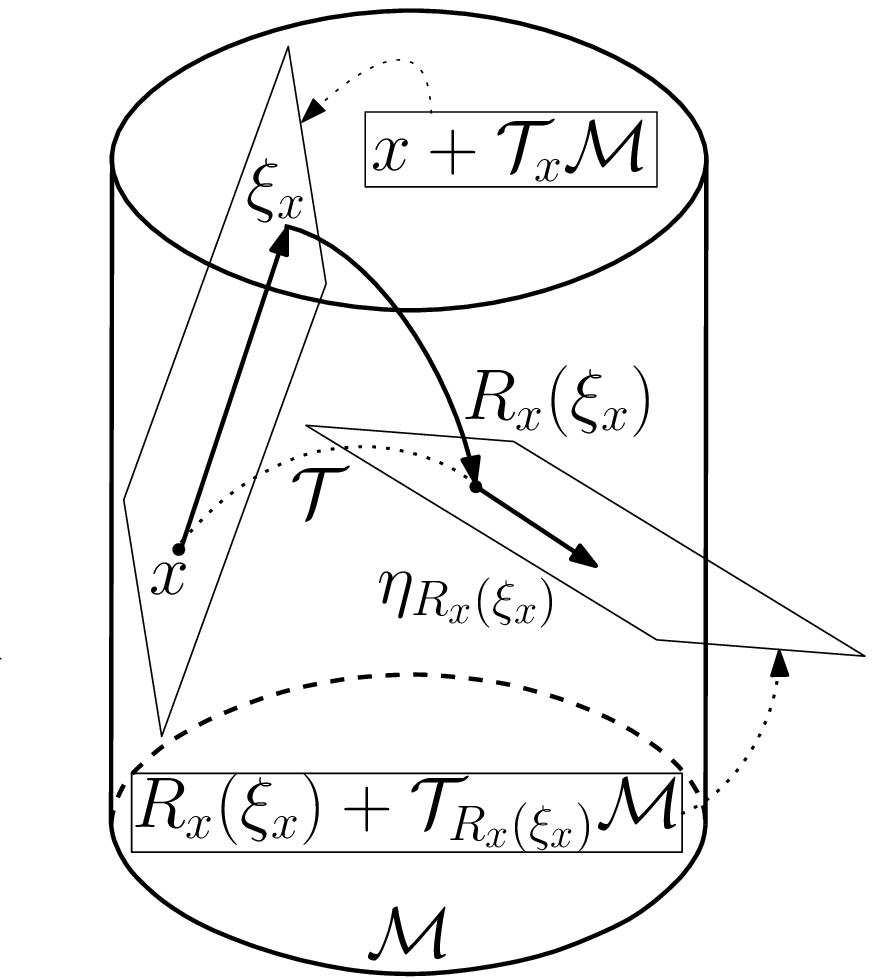}\\
\caption{An illustration of a vector transport $\mathcal{T}$ on a two-dimensional manifold embedded in $\mathds{R}^3$ that connects the tangent space of ${\mathbf{X}}$ with tangent vector $\xi_{\mathbf{X}}$ with the one of its retraction $R_{\mathbf{X}}(\xi_{\mathbf{X}})$. A connection $\nabla$ can be obtained from the speed at the origin of the inverse of the vector transport $\mathcal{T}^{-1}$.} \label{fig:3}
\end{figure}

On a Riemannian manifold, the Levi-Civita is the canonical choice as it preserve the Riemannian metric. The connection is computed as:
\begin{definition}
The Levi-Civita connection is the unique affine connection on $\mathcal{M}$ with the Reimannian metric $\langle.,.\rangle$ that satisfy for all $\eta ,\xi ,\chi \in \mathcal{T} \mathcal{M}$:
\begin{enumerate}
\item $\nabla_\eta \xi - \nabla_\xi \eta = [\eta,\xi] $
\item $\chi \langle \eta,\xi \rangle = \langle \nabla_\chi \eta,\xi \rangle + \langle\eta ,  \nabla_\chi \xi \rangle$,
\end{enumerate}
where $[\xi,\eta]$ is the Lie bracket, i.e., a function from the set of smooth function to itself defined by $[\xi,\eta]g = \xi(\eta (g)) - \eta(\xi (g))$. \label{def:1}
\end{definition}

For the manifolds of interest in this paper, the Lie bracket can be written as the implicit directional differentiation $[\xi,\eta] = \text{D}(\eta)[\xi] - \text{D}(\xi)[\eta]$. The expression of the Levi-Civita can be computed using the Koszul formula:
\begin{align}
2 \langle \nabla_\chi \eta,\xi \rangle &= \chi \langle \eta,\xi \rangle + \eta \langle \xi,\chi \rangle - \xi \langle \chi,\eta \rangle \nonumber \\
& \qquad - \langle \chi,[\eta,\xi] \rangle + \langle \eta,[\xi,\chi] \rangle + \langle \xi,[\chi,\eta] \rangle \label{eq:15}
\end{align}
Note that connections and particularity the Levi-Civita, are defined for all vector fields on $\mathcal{M}$. However for the purpose of this paper, only the tangent bundle is of interest. With the above notion of connection, the Riemannian Hessian can be written as:
\begin{definition}
The Riemannian Hessian of $f$ at ${\mathbf{X}}$, denoted by $\text{hess }f({\mathbf{X}})$, of a manifold $\mathcal{M}$ is a mapping from $\mathcal{T}_{\mathbf{X}}\mathcal{M}$ into itself defined by:
\begin{align}
\text{hess }f({\mathbf{X}})[\xi_{\mathbf{X}}] = \nabla_{\xi_{\mathbf{X}}} \text{grad }f({\mathbf{X}}), \ \forall \ \xi_{\mathbf{X}} \in \mathcal{T}_{\mathbf{X}}\mathcal{M},
\end{align}
where $\text{grad }f({\mathbf{X}})$ is the Riemannian gradient and $\nabla$ is the Riemannian connection on $\mathcal{M}$.
\end{definition}

It can readily be verified that the Riemannian Hessian verify similar property as the Euclidean one, i.e. for all $\xi_{\mathbf{X}},\eta_{\mathbf{X}} \in \mathcal{T}_{\mathbf{X}}\mathcal{M}$, we have 
\begin{align}
\langle \text{hess }f({\mathbf{X}})[\xi_{\mathbf{X}}],\eta_{\mathbf{X}} \rangle_{\mathbf{X}} = \langle \xi_{\mathbf{X}}, \text{hess }f({\mathbf{X}})[\eta_{\mathbf{X}}]\rangle_{\mathbf{X}} , \nonumber
\end{align}

\begin{algorithm}[t!]
\begin{algorithmic}[1]
\REQUIRE Manifold $\mathcal{M}$, function $f$, retraction $R$, and affine connection $\nabla$.
\STATE Initialize ${\mathbf{X}} \in \mathcal{M}$.
\WHILE {$||\text{grad }f({\mathbf{X}})||_{{\mathbf{X}}} \geq \epsilon$}
\STATE Find descent direction $\xi_{\mathbf{X}} \in \mathcal{T}_{\mathbf{X}}\mathcal{M}$ such that:
\begin{align}
\text{hess }f({\mathbf{X}})[\xi_{\mathbf{X}}] = -\text{grad }f({\mathbf{X}}),
\end{align}
wherein $\text{hess }f({\mathbf{X}})[\xi_{\mathbf{X}}] = \nabla_{\xi_{\mathbf{X}}} \text{grad }f({\mathbf{X}})$
\STATE Retract ${\mathbf{X}} = R_{\mathbf{X}}(\xi_{\mathbf{X}})$.
\ENDWHILE
\STATE Output ${\mathbf{X}}$.
\end{algorithmic}
\caption{Newton's method on Riemannian Manifold}
\label{alg2}
\end{algorithm}

\begin{remark}
The name of Riemannian gradient and Hessian is due to the fact that the function $f$ can be approximated in a neighborhood of ${\mathbf{X}}$ by the following:
\begin{align}
f({\mathbf{X}}+\delta {\mathbf{X}}) &= f({\mathbf{X}}) + \langle \text{grad }f({\mathbf{X}}),\text{Exp}_{\mathbf{X}}^{-1}(\delta {\mathbf{X}})\rangle_{\mathbf{X}}    \\ 
& \quad + \cfrac{1}{2} \langle \text{hess }f({\mathbf{X}})[\text{Exp}_{\mathbf{X}}^{-1}(\delta {\mathbf{X}})] ,\text{Exp}_{\mathbf{X}}^{-1}(\delta {\mathbf{X}})\rangle_{\mathbf{X}}. \nonumber
\end{align}
\end{remark}

Using the above definitions, the generalization of Newton's method to Riemannian optimization is done by replacing both the Euclidean gradient and Hessian by their Riemannian counterpart in \eref{eq:5}. Hence, the search direction is the tangent vector $\xi_{\mathbf{X}}$ that satisfies $\text{hess }f({\mathbf{X}})[\xi_{\mathbf{X}}] = -\text{grad }f({\mathbf{X}})$. The update is found by retraction the tangent vector to the manifold. The steps of the algorithm are illustrated in \algref{alg2}.

\subsection{Problems of Interest}

As shown in the previous section, computing the Riemannian gradient and Hessian for a given function over some manifold $\mathcal{M}$ allows the design of efficient algorithms that exploit the geometrical structure of the problem. The paper's main contribution is to propose a framework for solving a subset of convex programs including those in which the optimization variable represents a doubly stochastic and possibly symmetric and/or definite multidimensional probability distribution function.

In particular, the paper derives the relationship between the Euclidean gradient and Hessian and their Riemannian counterpart for the manifolds of doubly stochastic matrices, symmetric stochastic matrices, and symmetric positive stochastic matrices. In other words, for a convex function $f: \mathds{R}^{n \times m}\longrightarrow \mathds{R}$, the paper proposes solving the following problem:
\begin{subequations}
\label{eq:6}
\begin{align}
\min & \  f(\mathbf{X}) \\
\label{eq:7}  {\rm s.t.\ }  &X_{ij} > 0, \forall \ 1 \leq i \leq n, 1 \leq j \leq m,\\
\label{eq:8} & \sum_{j=1}^m X_{ij} = 1, \forall \ 1 \leq i \leq n,\\
\label{eq:9} & \sum_{i=1}^n X_{ij} = 1, \forall \ 1 \leq j \leq m,\\
\label{eq:10} & \mathbf{X} = \mathbf{X}^T, \\
\label{eq:11} & \mathbf{X} \succ \mathbf{0},
\end{align}
\end{subequations}
wherein constraints \eref{eq:7}-\eref{eq:8} produce a stochastic matrix, \eref{eq:7}-\eref{eq:9} a doubly stochastic one, \eref{eq:7}-\eref{eq:10} a symmetric stochastic one, and \eref{eq:7}-\eref{eq:11} a definite symmetric matrix. While the first scenario is studied in \cite{7182334}, the next sections study each problem, respectively. Let $\mathbf{1}$ be the all ones vector and define the multinomial, doubly stochastic multinomial, symmetric multinomial, and definite multinomial, respectively, as follows:
\begin{align*}
\mathds{P}_{n}^m &= \left\{ \mathbf{X} \in \mathds{R}^{n \times m} \big| X_{ij} > 0,\ \mathbf{X}\mathbf{1}=\mathbf{1} \right\} \\
\mathds{D}\mathds{P}_{n} &= \left\{ \mathbf{X} \in \mathds{R}^{n \times n} \big|  X_{ij} > 0,\ \mathbf{X}\mathbf{1}=\mathbf{1},\ \mathbf{X}^T\mathbf{1}=\mathbf{1} \right\} \\
\mathds{S}\mathds{P}_{n} &= \left\{ \mathbf{X} \in \mathds{R}^{n \times n} \big|  X_{ij} > 0,\ \mathbf{X}\mathbf{1}=\mathbf{1},\ \mathbf{X} = \mathbf{X}^T  \right\} \\
\mathds{S}\mathds{P}_{n}^+ &= \left\{ \mathbf{X} \in \mathds{R}^{n \times n} \big| X_{ij} > 0,\ \mathbf{X}\mathbf{1}=\mathbf{1},\ \mathbf{X} = \mathbf{X}^T,\ \mathbf{X} \succ \mathbf{0} \right\} \\
\end{align*}
For all the above manifolds, the paper uses the Fisher information as the Riemannian metric $g$ those restriction on $\mathcal{T}_\mathbf{X}\mathcal{M}$ is defined by:
\begin{align}
g(\xi_\mathbf{X},\eta_\mathbf{X}) &= \langle\xi_\mathbf{X},\eta_\mathbf{X}\rangle_\mathbf{X} = \text{Tr}((\xi_\mathbf{X} \oslash \mathbf{X})(\eta_\mathbf{X})^T )  \\
&= \sum_{i=1}^n \sum_{j=1}^m \cfrac{(\xi_\mathbf{X})_{ij} (\eta_\mathbf{X})_{ij}}{\mathbf{X}_{ij}}, \ \forall \ \xi_{\mathbf{X}},\eta_{\mathbf{X}} \in \mathcal{T}_\mathbf{X}\mathcal{M}.\nonumber
\end{align}
Endowing the multinomial with the Fisher information as Riemannian metric gives the manifold a differential structure that is invariant over the choice of coordinate system. More information about the Fisher information metric and its use in information goemetry can be found in \cite{5181555}. Using the manifold definition above, the optimization problems can be reformulated over the manifolds as:
\begin{align*}
\min_{\mathbf{X} \in \mathds{P}_{n}^m } &   f(\mathbf{X}) , \ \min_{\mathbf{X} \in \mathds{D}\mathds{P}_{n}} &   f(\mathbf{X}) , \ \min_{\mathbf{X} \in \mathds{S}\mathds{P}_{n}} &   f(\mathbf{X}) , \ 
\min_{\mathbf{X} \in \mathds{S}\mathds{P}_{n}^+} &   f(\mathbf{X}).
\end{align*}

In the rest of the paper, the notation $\mathbf{A} \oslash \mathbf{B}$ refers to the Hadamard, i.e., element-wise, division of $\mathbf{A}$ by $\mathbf{B}$. Similarly, the symbol $\odot$ denotes the Hadamard product.

\section{The Doubly Stochastic Multinomial Manifold} \label{sec:the}

This section studies the structure of the doubly stochastic manifold $\mathds{D}\mathds{P}_{n}$ and provides the expressions of the necessary ingredients to design Riemannian optimization algorithms over the manifold. 

\subsection{Manifold Geometry}

The set of doubly stochastic matrices is the set of \emph{square} matrices with positive entries such that each column and row sums to $1$. It can easily be shown that only a square matrix can verify such property. As a consequence of the Birkhoff-von Neumann theorem, $\mathds{D}\mathds{P}_{n}$ is an embedded manifold of $\mathds{R}^{n \times n}$. A short proof of the Birkhoff-von Neumann theorem using elementary geometry concepts can be found in \cite{85415218}. The dimension of $\mathds{D}\mathds{P}_{n}$ is $(n-1)^2$ which can be seen from the fact that the manifold is generated from $2n-1$ linearly independent equations specifying that the rows and columns all sums to one. The dimension of the manifold would be clearer after deriving the tangent space which is a linear space with the same dimension as the manifold.

Let $\mathbf{X} \in \mathds{D}\mathds{P}_{n}$ be a point on the manifold, the tangent space $\mathcal{T}_{\mathbf{X}}\mathds{D}\mathds{P}_{n}$ is given by the following preposition.
\begin{preposition}
The tangent space $\mathcal{T}_{\mathbf{X}}\mathds{D}\mathds{P}_{n}$ is defined by:
\begin{align}
\mathcal{T}_{\mathbf{X}}\mathds{D}\mathds{P}_{n} = \left\{\mathbf{Z} \in \mathds{R}^{n \times n} \big| \mathbf{Z}\mathbf{1}=\mathbf{0},\ \mathbf{Z}^T\mathbf{1}=\mathbf{0} \right\},
\end{align}
wherein $\mathbf{0}$ is the all zeros vector.
\label{pre1}
\end{preposition}

\begin{proof}
The technique of computing the tangent space of the manifolds of interest in this paper can be found in \appref{app1}. The complete proof of the expression of the tangent space of doubly stochastic matrices is located in the first subsection of \appref{app1}.
\end{proof}

From the expression of the tangent space, it is clear that the equations $\mathbf{Z}\mathbf{1}=\mathbf{0}$ and $\mathbf{Z}^T\mathbf{1}=\mathbf{0}$ yield only $2n-1$ linearly independent constraints as the last column constraint can be written as the sum of all rows and using the fact that the previous $(n-1)$ columns sum to zero. Let $\Pi_\mathbf{X}: \mathds{R}^{n \times n} \longrightarrow \mathcal{T}_{\mathbf{X}}\mathds{D}\mathds{P}_{n}$ be the orthogonal, in the $\langle.,.\rangle_\mathbf{X}$ sens, projection of the ambient space onto the tangent space. The expression of such operator is given in the upcoming theorem:
\begin{theorem}
The orthogonal projection $\Pi_\mathbf{X}$ has the following expression:
\begin{align}
\Pi_\mathbf{X}(\mathbf{Z}) = \mathbf{Z} - (\alpha \mathbf{1}^T + \mathbf{1} \beta^T ) \odot \mathbf{X},
\end{align}
wherein the vectors $\alpha$ and $\beta$ are obtained through the following equations:
\begin{align}
\alpha &= (\mathbf{I} - \mathbf{X}\mathbf{X}^T)^{\dagger}(\mathbf{Z} - \mathbf{X}\mathbf{Z}^T)\mathbf{1}  \\
\beta &= \mathbf{Z}^T\mathbf{1} - \mathbf{X}^{T}\alpha,
\end{align}
with $\mathbf{Y}^{\dagger}$ being the left-pseudo inverse that satisfy $\mathbf{Y}^{\dagger}\mathbf{Y}=\mathbf{I}$.
\label{th1}
\end{theorem}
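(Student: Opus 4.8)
The plan is to use that $\mathcal{T}_\mathbf{X}\mathds{D}\mathds{P}_n$ is the solution set of a system of linear equations, to compute the $\langle\cdot,\cdot\rangle_\mathbf{X}$-orthogonal complement (the normal space) explicitly, and then to fix the remaining free parameters by demanding that $\mathbf{Z}$ minus its normal component be tangent.

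First I would identify the normal space $\mathcal{N}_\mathbf{X}$. Since $\langle\mathbf{N},\mathbf{Z}\rangle_\mathbf{X}=\text{Tr}((\mathbf{N}\oslash\mathbf{X})\mathbf{Z}^T)$ is the ordinary trace inner product of $\mathbf{N}\oslash\mathbf{X}$ with $\mathbf{Z}$, a matrix $\mathbf{N}$ is $\mathbf{X}$-orthogonal to $\mathcal{T}_\mathbf{X}\mathds{D}\mathds{P}_n$ if and only if $\mathbf{N}\oslash\mathbf{X}$ is Euclidean-orthogonal to it. By \preref{pre1} the tangent space is the common kernel of the linear forms $\mathbf{Z}\mapsto(\mathbf{Z}\mathbf{1})_i$ and $\mathbf{Z}\mapsto(\mathbf{Z}^T\mathbf{1})_j$, $1\le i,j\le n$, whose representers in the trace inner product are $\mathbf{e}_i\mathbf{1}^T$ and $\mathbf{1}\mathbf{e}_j^T$ with $\mathbf{e}_i$ the standard basis vectors; hence the Euclidean normal space is $\{\alpha\mathbf{1}^T+\mathbf{1}\beta^T:\alpha,\beta\in\mathds{R}^n\}$, and therefore $\mathcal{N}_\mathbf{X}=\{(\alpha\mathbf{1}^T+\mathbf{1}\beta^T)\odot\mathbf{X}:\alpha,\beta\in\mathds{R}^n\}$. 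Consequently $\Pi_\mathbf{X}(\mathbf{Z})$ must be of the form $\mathbf{Z}-(\alpha\mathbf{1}^T+\mathbf{1}\beta^T)\odot\mathbf{X}$, and it remains to pin down $\alpha$ and $\beta$.

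Next I would impose tangency, i.e. that the row and column sums of $\Pi_\mathbf{X}(\mathbf{Z})$ vanish. Using $\mathbf{X}\mathbf{1}=\mathbf{1}$ and $\mathbf{X}^T\mathbf{1}=\mathbf{1}$ one checks $((\alpha\mathbf{1}^T)\odot\mathbf{X})\mathbf{1}=\alpha$, $((\mathbf{1}\beta^T)\odot\mathbf{X})\mathbf{1}=\mathbf{X}\beta$, $((\alpha\mathbf{1}^T)\odot\mathbf{X})^T\mathbf{1}=\mathbf{X}^T\alpha$ and $((\mathbf{1}\beta^T)\odot\mathbf{X})^T\mathbf{1}=\beta$, so the two tangency conditions become $\alpha+\mathbf{X}\beta=\mathbf{Z}\mathbf{1}$ and $\mathbf{X}^T\alpha+\beta=\mathbf{Z}^T\mathbf{1}$. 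Eliminating $\beta=\mathbf{Z}^T\mathbf{1}-\mathbf{X}^T\alpha$ yields $(\mathbf{I}-\mathbf{X}\mathbf{X}^T)\alpha=(\mathbf{Z}-\mathbf{X}\mathbf{Z}^T)\mathbf{1}$, which — modulo justifying the inversion — is precisely the asserted expression for $\alpha$, with $\beta$ then read off from the second equation.

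The delicate step is the treatment of $\mathbf{I}-\mathbf{X}\mathbf{X}^T$, which is singular because $(\mathbf{I}-\mathbf{X}\mathbf{X}^T)\mathbf{1}=\mathbf{0}$. I would argue that, since $\mathbf{X}$ has strictly positive entries, $\mathbf{X}\mathbf{X}^T$ is a doubly stochastic matrix with strictly positive entries, so by Perron--Frobenius its eigenvalue $1$ is simple; being symmetric, $\mathbf{I}-\mathbf{X}\mathbf{X}^T$ then has kernel exactly $\mathrm{span}\{\mathbf{1}\}$ and range exactly $\mathbf{1}^\perp$. For consistency I would verify that the right-hand side lies in $\mathbf{1}^\perp$: indeed $\mathbf{1}^T(\mathbf{Z}-\mathbf{X}\mathbf{Z}^T)\mathbf{1}=\mathbf{1}^T\mathbf{Z}\mathbf{1}-(\mathbf{X}^T\mathbf{1})^T\mathbf{Z}^T\mathbf{1}=\mathbf{1}^T\mathbf{Z}\mathbf{1}-\mathbf{1}^T\mathbf{Z}\mathbf{1}=0$, so $\alpha=(\mathbf{I}-\mathbf{X}\mathbf{X}^T)^{\dagger}(\mathbf{Z}-\mathbf{X}\mathbf{Z}^T)\mathbf{1}$ is a genuine solution. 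Finally I would note that the answer is independent of which solution is chosen — replacing $\alpha$ by $\alpha+c\mathbf{1}$ forces $\beta$ to become $\beta-c\mathbf{1}$ and leaves $(\alpha\mathbf{1}^T+\mathbf{1}\beta^T)\odot\mathbf{X}$ unchanged — and that $\Pi_\mathbf{X}$ is idempotent: for $\mathbf{Z}\in\mathcal{T}_\mathbf{X}\mathds{D}\mathds{P}_n$ both right-hand sides vanish, forcing $\alpha=\beta=\mathbf{0}$ and $\Pi_\mathbf{X}(\mathbf{Z})=\mathbf{Z}$, which combined with $\Pi_\mathbf{X}(\mathbf{Z})\in\mathcal{T}_\mathbf{X}\mathds{D}\mathds{P}_n$ for all $\mathbf{Z}$ gives $\Pi_\mathbf{X}\circ\Pi_\mathbf{X}=\Pi_\mathbf{X}$, confirming it is the orthogonal projection. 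The main obstacle is the rank-and-consistency analysis of $\mathbf{I}-\mathbf{X}\mathbf{X}^T$; everything else is routine bookkeeping with Hadamard products.
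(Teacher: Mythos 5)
Your proof is correct and follows essentially the same route as the paper's (Appendix B): identify the $\langle\cdot,\cdot\rangle_{\mathbf{X}}$-orthogonal complement of the tangent space as $\left\{(\alpha\mathbf{1}^T+\mathbf{1}\beta^T)\odot\mathbf{X}\right\}$, impose that the row and column sums of $\mathbf{Z}-(\alpha\mathbf{1}^T+\mathbf{1}\beta^T)\odot\mathbf{X}$ vanish to obtain the system $\alpha+\mathbf{X}\beta=\mathbf{Z}\mathbf{1}$, $\mathbf{X}^T\alpha+\beta=\mathbf{Z}^T\mathbf{1}$, and eliminate $\beta$. The only differences are minor: the paper gets the complement by direct verification plus a dimension count and argues solvability on the stacked $2n\times 2n$ system via orthogonality of the right-hand side to the null vector $(\mathbf{1}^T,-\mathbf{1}^T)^T$, whereas you use representers of the defining linear forms and a Perron--Frobenius/range argument on $\mathbf{I}-\mathbf{X}\mathbf{X}^T$, which if anything treats the singularity and the well-definedness of the projection slightly more carefully.
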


\begin{proof}
Techniques for computing the orthogonal projection on the tangent space for the manifolds of interest in this paper can be found in \appref{app2}. The projection on the tangent space of the doubly stochastic matrices can be found in the first subsection of \appref{app2}.
\end{proof}

The projection $\Pi_\mathbf{X}$ is of great interest as it would allow in the next subsection to relate the Riemannian gradient and Hessian to their Euclidean equivalent. 

\begin{remark}
The above theorem gives separate expressions of $\alpha$ and $\beta$ for ease of notations in the upcoming computation of the Hessian. However, such expressions require squaring matrix $\mathbf{X}$, i.e., $\mathbf{X}\mathbf{X}^T$ which might not be numerically stable. For implementation purposes, the vectors $\alpha$ and $\beta$ are obtained as one of the solutions (typically the left-pseudo inverse) to the linear system:
\begin{align}
\begin{pmatrix}
\mathbf{Z}\mathbf{1} \\
\mathbf{Z}^T\mathbf{1}
\end{pmatrix} = 
\begin{pmatrix}
\mathbf{I} & \mathbf{X} \\
\mathbf{X}^T & \mathbf{I}
\end{pmatrix}
\begin{pmatrix}
\alpha \\
\beta
\end{pmatrix} \label{eq:257}
\end{align} 
\label{rm1}
\end{remark}

\subsection{Riemannian Gradient and Retraction Computation}

This subsection first derives the relationship between the Riemannian gradient and its Euclidean counterpart for the manifold of interest. The equation relating these two quantities is first derived in \cite{7182334} for the multinomial manifold, but no proof is provided therein. For completeness purposes, we provide the lemma with its proof in this manuscript.

\begin{lemma}
The Riemannian gradient $\text{grad }f(\mathbf{X})$ can be obtained from the Euclidean gradient $\text{Grad }f(\mathbf{X})$ using the identity:
\begin{align}
\text{grad }f(\mathbf{X}) = \Pi_\mathbf{X}(\text{Grad }f(\mathbf{X}) \odot \mathbf{X})
\end{align}
\label{lem1}
\end{lemma}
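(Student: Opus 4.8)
The plan is to start from the defining variational characterization of the Riemannian gradient and massage it into the claimed formula. By definition, $\operatorname{grad} f(\mathbf{X})$ is the unique element of $\mathcal{T}_{\mathbf{X}}\mathds{D}\mathds{P}_n$ satisfying $\langle \operatorname{grad} f(\mathbf{X}), \xi_{\mathbf{X}}\rangle_{\mathbf{X}} = \mathrm{D} f(\mathbf{X})[\xi_{\mathbf{X}}]$ for all $\xi_{\mathbf{X}} \in \mathcal{T}_{\mathbf{X}}\mathds{D}\mathds{P}_n$. On the other hand, the Euclidean gradient satisfies $\langle \operatorname{Grad} f(\mathbf{X}), \xi_{\mathbf{X}}\rangle = \mathrm{D} f(\mathbf{X})[\xi_{\mathbf{X}}]$ with the ordinary trace inner product on $\mathds{R}^{n\times n}$. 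So the first step is to equate these two expressions of the directional derivative: for every tangent vector $\xi_{\mathbf{X}}$,
\begin{align}
\langle \operatorname{grad} f(\mathbf{X}), \xi_{\mathbf{X}}\rangle_{\mathbf{X}} = \langle \operatorname{Grad} f(\mathbf{X}), \xi_{\mathbf{X}}\rangle. \nonumber
\end{align}

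The second step is to rewrite the right-hand Euclidean inner product in terms of the Fisher metric $\langle .,.\rangle_{\mathbf{X}}$. Since $\langle \mathbf{A},\mathbf{B}\rangle_{\mathbf{X}} = \sum_{ij} A_{ij}B_{ij}/X_{ij}$, one has the elementary identity $\langle \mathbf{A}, \mathbf{B}\rangle = \langle \mathbf{A}\odot\mathbf{X}, \mathbf{B}\rangle_{\mathbf{X}}$, because multiplying $\mathbf{A}$ entrywise by $\mathbf{X}$ exactly cancels the $1/X_{ij}$ weights. Applying this with $\mathbf{A} = \operatorname{Grad} f(\mathbf{X})$ and $\mathbf{B} = \xi_{\mathbf{X}}$ turns the requirement into $\langle \operatorname{grad} f(\mathbf{X}), \xi_{\mathbf{X}}\rangle_{\mathbf{X}} = \langle \operatorname{Grad} f(\mathbf{X})\odot\mathbf{X}, \xi_{\mathbf{X}}\rangle_{\mathbf{X}}$ for all $\xi_{\mathbf{X}} \in \mathcal{T}_{\mathbf{X}}\mathds{D}\mathds{P}_n$. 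This says that $\operatorname{grad} f(\mathbf{X})$ and $\operatorname{Grad} f(\mathbf{X})\odot\mathbf{X}$ have the same inner product against every tangent vector, i.e. they differ by an element $\langle.,.\rangle_{\mathbf{X}}$-orthogonal to $\mathcal{T}_{\mathbf{X}}\mathds{D}\mathds{P}_n$. The third step is then to invoke the defining property of $\Pi_{\mathbf{X}}$ from \thref{th1}: $\Pi_{\mathbf{X}}$ is the $\langle.,.\rangle_{\mathbf{X}}$-orthogonal projector onto $\mathcal{T}_{\mathbf{X}}\mathds{D}\mathds{P}_n$, and the unique tangent vector having the prescribed inner products is precisely $\Pi_{\mathbf{X}}(\operatorname{Grad} f(\mathbf{X})\odot\mathbf{X})$. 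Uniqueness of the Riemannian gradient then forces $\operatorname{grad} f(\mathbf{X}) = \Pi_{\mathbf{X}}(\operatorname{Grad} f(\mathbf{X})\odot\mathbf{X})$, which is the claim.

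I do not expect a serious obstacle here; the argument is essentially a change-of-metric bookkeeping exercise. The one point requiring a little care is the self-adjointness of $\Pi_{\mathbf{X}}$ with respect to the Fisher metric — one should note that, for any ambient $\mathbf{Z}$ and any tangent $\xi_{\mathbf{X}}$, $\langle \Pi_{\mathbf{X}}(\mathbf{Z}), \xi_{\mathbf{X}}\rangle_{\mathbf{X}} = \langle \mathbf{Z}, \xi_{\mathbf{X}}\rangle_{\mathbf{X}}$, which is exactly what "orthogonal projection in the $\langle.,.\rangle_{\mathbf{X}}$ sense" means and is how \thref{th1} was constructed. With that in hand the three displayed steps chain together immediately. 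A minor secondary check is that $\operatorname{Grad} f(\mathbf{X})\odot\mathbf{X}$ is a legitimate element of the ambient space $\mathds{R}^{n\times n}$ on which $\Pi_{\mathbf{X}}$ acts, which is obvious.
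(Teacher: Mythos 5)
Your proposal is correct and follows essentially the same route as the paper's proof: restrict the Euclidean variational identity to the tangent space, convert the trace inner product to the Fisher metric via $\langle \mathbf{A},\xi_{\mathbf{X}}\rangle = \langle \mathbf{A}\odot\mathbf{X},\xi_{\mathbf{X}}\rangle_{\mathbf{X}}$, discard the component orthogonal to $\mathcal{T}_{\mathbf{X}}\mathds{D}\mathds{P}_n$ via the projection $\Pi_{\mathbf{X}}$, and conclude by uniqueness of the Riemannian gradient. The only cosmetic remark is that the property you label ``self-adjointness'' of $\Pi_{\mathbf{X}}$ is really just the defining orthogonality of the projection (the paper phrases it as splitting $\operatorname{Grad} f(\mathbf{X})\odot\mathbf{X}$ into tangent and normal parts), but the content is identical.
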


\begin{proof}
As shown in \sref{sec:opt}, the Riemannian gradient is by definition the \emph{unique} element of $\mathcal{T}_\mathbf{X} \mathds{D}\mathds{P}_{n}$ that is related to the directional derivative through the Riemannian metric as follows: 
\begin{align}
\langle \text{grad }f(\mathbf{X}), \xi_\mathbf{X} \rangle_\mathbf{X} = \text{D} f(\mathbf{X}) [\xi_\mathbf{X}],\ \forall \ \xi_\mathbf{X} \in \mathcal{T}_\mathbf{X} \mathds{D}\mathds{P}_{n}.
\label{eq:12}
\end{align}
Since the Riemannian gradient is unique, then finding an element of the tangent space that verifies the equality for all tangent vectors is sufficient to conclude that it is the Riemannian gradient. Now note that the Euclidean gradient can be written as a function of the directional derivative using the \emph{usual} scalar product as:
\begin{align}
\langle \text{Grad }f(\mathbf{X}), \xi \rangle = \text{D} f(\mathbf{X}) [\xi],\ \forall \ \xi \in \mathds{R}^{n \times n},
\end{align}
In particular, by restriction the above equation to $\mathcal{T}_\mathbf{X} \mathds{D}\mathds{P}_{n} \subset \mathds{R}^{n \times n}$ and converting the usual inner product to the Riemannian one, we can write:
\begin{align}
\langle \text{Grad }f(\mathbf{X}) , \xi_\mathbf{X} \rangle&= \langle \text{Grad }f(\mathbf{X}) \odot \mathbf{X}, \xi_\mathbf{X} \rangle_\mathbf{X} \nonumber \\
&= \text{D} f(\mathbf{X}) [\xi_\mathbf{X}],\ \forall \ \xi_\mathbf{X} \in \mathcal{T}_\mathbf{X} \mathds{D}\mathds{P}_{n}. 
\end{align}

Finally, projecting the scaled Euclidean gradient onto the tangent space and its orthogonal complement, i.e., $\text{Grad }f(\mathbf{X}) \odot \mathbf{X} = \Pi_\mathbf{X}(\text{Grad }f(\mathbf{X}) \odot \mathbf{X}) + \Pi_\mathbf{X}^\perp(\text{Grad }f(\mathbf{X}) \odot \mathbf{X})$ yields
\begin{align}
\langle \text{Grad }f(\mathbf{X}) \odot \mathbf{X}, \xi_\mathbf{X} \rangle_\mathbf{X} = \langle \Pi_\mathbf{X}(\text{Grad }f(\mathbf{X}) \odot \mathbf{X}), \xi_\mathbf{X} \rangle_\mathbf{X}, \nonumber
\end{align}
wherein, by definition of the projection on the orthogonal complement of the tangent space, the following holds
\begin{align}
\langle \Pi^\perp_\mathbf{X}(\text{Grad }f(\mathbf{X}) \odot \mathbf{X}), \xi_\mathbf{X} \rangle_\mathbf{X} = 0
\end{align}
The element $\Pi_\mathbf{X}(\text{Grad }f(\mathbf{X}) \odot \mathbf{X})$ being a tangent vector that satisfy \eref{eq:12}, we conclude that:
\begin{align}
\text{grad }f(\mathbf{X}) = \Pi_\mathbf{X}(\text{Grad }f(\mathbf{X}) \odot \mathbf{X})
\end{align}
\end{proof}

Note that the result of \lref{lem1} depends solely on the expression of the Riemannian metric and thus is applicable to the three manifolds of interest in this paper. Combining the expression of the orthogonal projection with the one of \lref{lem1}, we conclude that the Riemannian gradient has the following expression:
\begin{align}
\text{grad }f(\mathbf{X}) &= \gamma - (\alpha \mathbf{1}^T + \mathbf{1}\mathbf{1}^T \gamma - \mathbf{1} \alpha^T\mathbf{X}    )  \odot \mathbf{X} \nonumber \\
\alpha &= (\mathbf{I} - \mathbf{X}\mathbf{X}^T)^{\dagger}(\gamma-\mathbf{X}\gamma^T)\mathbf{1}  \nonumber \\
\gamma &=\text{Grad }f(\mathbf{X}) \odot \mathbf{X}. \label{eq:258}
\end{align}
For numerical stability, the term $\alpha$ is computed in a similar fashion as the procedure described in \rref{rm1} wherein $\mathbf{Z}$ is replaced by $\gamma$.

As shown in \sref{sec:opt}, one needs only to define a retraction from the tangent bundle to the manifold instead of the complex exponential map to take advantage of the optimization algorithms on the Riemannian manifolds. Among all possible retractions, one needs to derive one that have low-complexity in order to obtain efficient optimization algorithms. Therefore, the canonical choice is to exploit the linear structure of the embedding space in order to derive a retraction that does not require a projection on the manifold. Such canonical retraction is given in the following theorem:
\begin{theorem}
The mapping $R: \mathcal{T}\mathds{D}\mathds{P}_{n} \longrightarrow \mathds{D}\mathds{P}_{n} $ whose restriction $R_{\mathbf{X}}$ to $ \mathcal{T}_{\mathbf{X}}\mathds{D}\mathds{P}_{n} $ is given by:
\begin{align}
R_{\mathbf{X}}(\xi_{\mathbf{X}}) = \mathbf{X} + \xi_{\mathbf{X}},
\end{align}
represents a well-defined retraction on the doubly stochastic multinomial manifold provided that $\xi_{\mathbf{X}}$ is in the neighborhood of $\mathbf{0}_{\mathbf{X}}$, i.e., $\mathbf{X}_{ij} > - \left(\xi_{\mathbf{X}}\right)_{ij},\ 1 \leq i,j \leq n$.
\label{th4}
\end{theorem}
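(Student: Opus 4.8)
The plan is to check directly the three conditions that characterize a first-order retraction, as stated in \sref{sec:opt}: smoothness of $R$ on (a neighborhood of the zero section of) the tangent bundle, the centering property $R_{\mathbf{X}}(\mathbf{0})=\mathbf{X}$, and the local rigidity property $\frac{d}{d\tau}R_{\mathbf{X}}(\tau\xi_{\mathbf{X}})\big|_{\tau=0}=\xi_{\mathbf{X}}$. The only genuinely substantive point is the first one, namely that $\mathbf{X}+\xi_{\mathbf{X}}$ actually lies in $\mathds{D}\mathds{P}_{n}$; the other two are immediate from the affine form of the map.

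First I would show that $R_{\mathbf{X}}(\xi_{\mathbf{X}}) = \mathbf{X}+\xi_{\mathbf{X}} \in \mathds{D}\mathds{P}_{n}$ whenever $\xi_{\mathbf{X}} \in \mathcal{T}_{\mathbf{X}}\mathds{D}\mathds{P}_{n}$ and the positivity hypothesis holds. By \preref{pre1}, a tangent vector satisfies $\xi_{\mathbf{X}}\mathbf{1}=\mathbf{0}$ and $\xi_{\mathbf{X}}^T\mathbf{1}=\mathbf{0}$, so
\begin{align}
(\mathbf{X}+\xi_{\mathbf{X}})\mathbf{1} = \mathbf{X}\mathbf{1} + \xi_{\mathbf{X}}\mathbf{1} = \mathbf{1}, \qquad (\mathbf{X}+\xi_{\mathbf{X}})^T\mathbf{1} = \mathbf{X}^T\mathbf{1} + \xi_{\mathbf{X}}^T\mathbf{1} = \mathbf{1}, \nonumber
\end{align}
which recovers the row- and column-sum constraints defining $\mathds{D}\mathds{P}_{n}$. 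Strict positivity of the entries is exactly the stated hypothesis: $(\mathbf{X}+\xi_{\mathbf{X}})_{ij} = \mathbf{X}_{ij}+(\xi_{\mathbf{X}})_{ij} > 0$ for all $i,j$ iff $\mathbf{X}_{ij} > -(\xi_{\mathbf{X}})_{ij}$. Hence $\mathbf{X}+\xi_{\mathbf{X}}$ is a square matrix with positive entries and unit row and column sums, i.e. an element of $\mathds{D}\mathds{P}_{n}$.

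Next I would observe that the set $\mathcal{U} = \{(\mathbf{X},\xi_{\mathbf{X}}) \in \mathcal{T}\mathds{D}\mathds{P}_{n} \mid \mathbf{X}_{ij}+(\xi_{\mathbf{X}})_{ij} > 0,\ \forall\, i,j\}$ is an open subset of the tangent bundle containing the zero section (for each $\mathbf{X}$ it contains the open ball of radius $\min_{ij}\mathbf{X}_{ij}$ around $\mathbf{0}_{\mathbf{X}}$ in, say, the max-norm), and that $R$ restricted to $\mathcal{U}$ is smooth because it is the restriction of the affine map $(\mathbf{X},\mathbf{Z}) \mapsto \mathbf{X}+\mathbf{Z}$. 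This is the usual setting in which retractions are required to be defined — on a neighborhood of the zero section rather than on all of $\mathcal{T}\mathds{D}\mathds{P}_{n}$ — and it is what the phrase ``provided that $\xi_{\mathbf{X}}$ is in the neighborhood of $\mathbf{0}_{\mathbf{X}}$'' in the statement refers to. Centering then follows from $R_{\mathbf{X}}(\mathbf{0}) = \mathbf{X}+\mathbf{0} = \mathbf{X}$, and local rigidity from $\gamma_{\xi_{\mathbf{X}}}(\tau) = R_{\mathbf{X}}(\tau\xi_{\mathbf{X}}) = \mathbf{X}+\tau\xi_{\mathbf{X}}$, whose derivative at $\tau=0$ is $\xi_{\mathbf{X}}$; local surjectivity onto $\mathds{D}\mathds{P}_{n}$ is clear since $R_{\mathbf{X}}(\mathbf{Y}-\mathbf{X})=\mathbf{Y}$ for $\mathbf{Y}$ close to $\mathbf{X}$.

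There is no real hard step: the argument is essentially a one-line verification once \preref{pre1} is in hand, and the content of the theorem is the observation that the linear structure of the embedding space $\mathds{R}^{n\times n}$ is compatible with the row/column constraints, so that no nonlinear correction (projection back onto the manifold) is needed. The one point most worth spelling out — and the closest thing to an obstacle — is the domain issue: one must not claim $R$ is defined on all of $\mathcal{T}\mathds{D}\mathds{P}_{n}$, since a large $\xi_{\mathbf{X}}$ makes some entry of $\mathbf{X}+\xi_{\mathbf{X}}$ nonpositive and throws the image off the (open) manifold. Restricting to the open neighborhood $\mathcal{U}$ of the zero section resolves this and is harmless for \algref{alg1} and \algref{alg2}, where the step size is chosen small enough to keep the iterate strictly positive.
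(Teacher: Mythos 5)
Your proof is correct, but it follows a genuinely different route from the paper's. You verify the retraction axioms directly: the row/column-sum computation $(\mathbf{X}+\xi_{\mathbf{X}})\mathbf{1}=\mathbf{1}$, $(\mathbf{X}+\xi_{\mathbf{X}})^T\mathbf{1}=\mathbf{1}$ from \preref{pre1}, positivity from the stated hypothesis, smoothness of the affine map on an open neighborhood $\mathcal{U}$ of the zero section, and then centering and local rigidity, which are immediate. The paper instead invokes a general construction for embedded manifolds (\thref{th3} in \appref{app3}): it uses Sinkhorn's theorem to build a diffeomorphism $\phi(\mathbf{A},(d_1,d_2))=\mathrm{diag}(1,d_1)\,\mathbf{A}\,\mathrm{diag}(d_2)$ from $\mathds{D}\mathds{P}_n\times\overline{\mathds{R}}^{2n-1}$ onto the open set $\overline{\mathds{R}}^{n\times n}$, checks the dimension count $(n-1)^2+2n-1=n^2$ and the neutral element, and then defines $R_{\mathbf{X}}(\xi_{\mathbf{X}})=\pi_1(\phi^{-1}(\mathbf{X}+\xi_{\mathbf{X}}))$; the same row/column-sum computation you perform then shows $\mathbf{X}+\xi_{\mathbf{X}}$ is already doubly stochastic, so $\phi^{-1}$ acts as the identity and the retraction reduces to $\mathbf{X}+\xi_{\mathbf{X}}$. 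Your argument is shorter and self-contained, and it makes the domain issue (the open neighborhood of the zero section where positivity holds) explicit, which the paper only states implicitly through the condition $(\mathbf{X}+\xi_{\mathbf{X}})\in\overline{\mathds{R}}^{n\times n}$. What the paper's heavier machinery buys is reuse: the same Sinkhorn/DAD decomposition argument gives \corref{cor1} for the symmetric manifold and underpins the smoothness of the projection $\mathcal{P}$ used in the second retraction of \lref{lem5}, whereas your direct verification would have to be supplemented by a separate smoothness argument for those results. As a proof of \thref{th4} alone, your version is complete and arguably cleaner.
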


\begin{proof}
The proof of this theorem relies on the fact that the manifold of interest is an embedded manifold of an Euclidean space. For such manifold, one needs to find a matrix decomposition with desirable dimension and smoothness properties. The relevant theorem and techniques for computing the canonical retraction on embedded manifold are given in \appref{app3}. The proof of \thref{th4} is accomplished by extending the Sinkhorn's theorem \cite{8541523974} and can be found in the first section of \appref{app3}. 
\end{proof}

The performance of the above retraction are satisfactory as long as the optimal solution $\mathbf{X}$ does not have vanishing entries, i.e., some $\mathbf{X}_{ij}$ that approach $0$. In such situations, the update procedure results in tiny steps which compromises the convergence speed of the optimization algorithms. Although the projection on the set of doubly stochastic matrices is difficult \cite{0651518}, this paper proposes a highly efficient retraction that take advantage of the structure of both the manifold and its tangent space. Define the set of entry-wise positive matrices $\overline{\mathds{R}}^{n \times n} = \{ \mathbf{X} \in \mathds{R}^{n \times n} \ | \ \mathbf{X}_{ij} >0\}$ and let $\mathcal{P}: \overline{\mathds{R}}^{n \times n} \longrightarrow \mathds{D}\mathds{P}_{n}$ be the projection onto the set of doubly stochastic matrices obtained using the Sinkhorn-Knopp algorithm \cite{8541523974}. The proposed retraction, using the element-wise exponential of a matrix $\text{ exp}(.)$, is given in the following lemma
\begin{lemma}
The mapping $R: \mathcal{T}\mathds{D}\mathds{P}_{n} \longrightarrow \mathds{D}\mathds{P}_{n} $ whose restriction $R_{\mathbf{X}}$ to $ \mathcal{T}_{\mathbf{X}}\mathds{D}\mathds{P}_{n} $ is given by:
\begin{align}
R_{\mathbf{X}}(\xi_{\mathbf{X}}) = \mathcal{P} \left( \mathbf{X} \odot\text{exp}(\xi_{\mathbf{X}} \oslash \mathbf{X}) \right),
\end{align}
is a retraction on the doubly stochastic multinomial manifold for all $\xi_{\mathbf{X}} \in \mathcal{T}\mathds{D}\mathds{P}_{n}$.
\label{lem5}
\end{lemma}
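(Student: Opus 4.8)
The plan is to verify that the map $R_{\mathbf{X}}(\xi_{\mathbf{X}}) = \mathcal{P}\left( \mathbf{X} \odot \text{exp}(\xi_{\mathbf{X}} \oslash \mathbf{X}) \right)$ satisfies the two defining axioms of a first-order retraction: centering, $R_{\mathbf{X}}(\mathbf{0})=\mathbf{X}$, and local rigidity, $\frac{d}{d\tau} R_{\mathbf{X}}(\tau \xi_{\mathbf{X}})\big|_{\tau=0} = \xi_{\mathbf{X}}$. Smoothness is also required, but it follows routinely: the element-wise exponential and Hadamard product are smooth, $\mathbf{X}$ has strictly positive entries, so $\mathbf{X}\odot\text{exp}(\xi_{\mathbf{X}}\oslash\mathbf{X})$ lands in $\overline{\mathds{R}}^{n\times n}$ where the Sinkhorn–Knopp projection $\mathcal{P}$ is well-defined and smooth (the Sinkhorn scaling factors depend analytically on a strictly positive matrix). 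The key structural fact I would invoke is that $\mathcal{P}$ acts by left and right multiplication by positive diagonal matrices: $\mathcal{P}(\mathbf{A}) = \mathbf{D}_1(\mathbf{A})\,\mathbf{A}\,\mathbf{D}_2(\mathbf{A})$, and crucially $\mathcal{P}$ is the identity on $\mathds{D}\mathds{P}_{n}$ itself (so $\mathbf{D}_1=\mathbf{D}_2=\mathbf{I}$ there).

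Centering is immediate: at $\xi_{\mathbf{X}}=\mathbf{0}$, $\text{exp}(\mathbf{0}\oslash\mathbf{X})$ is the all-ones matrix, so $\mathbf{X}\odot\text{exp}(\mathbf{0}) = \mathbf{X}$, and since $\mathbf{X}\in\mathds{D}\mathds{P}_{n}$ we get $\mathcal{P}(\mathbf{X})=\mathbf{X}$. For local rigidity, write $\gamma(\tau) = R_{\mathbf{X}}(\tau\xi_{\mathbf{X}}) = \mathcal{P}\!\left(\mathbf{X}\odot\text{exp}(\tau\,\xi_{\mathbf{X}}\oslash\mathbf{X})\right)$ and differentiate at $\tau=0$ using the chain rule. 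The inner curve $\beta(\tau) = \mathbf{X}\odot\text{exp}(\tau\,\xi_{\mathbf{X}}\oslash\mathbf{X})$ has $\beta(0)=\mathbf{X}$ and $\beta'(0) = \mathbf{X}\odot(\xi_{\mathbf{X}}\oslash\mathbf{X}) = \xi_{\mathbf{X}}$, using that $\frac{d}{d\tau}\text{exp}(\tau c)\big|_{\tau=0}=c$ entrywise. So I must show $\text{D}\mathcal{P}(\mathbf{X})[\xi_{\mathbf{X}}] = \xi_{\mathbf{X}}$ for $\xi_{\mathbf{X}}\in\mathcal{T}_{\mathbf{X}}\mathds{D}\mathds{P}_{n}$, i.e. that the differential of the Sinkhorn projection at a point already on the manifold acts as the identity on the tangent space. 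This is the crux: differentiating $\mathcal{P}(\mathbf{A}) = \mathbf{D}_1(\mathbf{A})\mathbf{A}\mathbf{D}_2(\mathbf{A})$ at $\mathbf{A}=\mathbf{X}$ gives $\text{D}\mathcal{P}(\mathbf{X})[\xi_{\mathbf{X}}] = \dot{\mathbf{D}}_1\mathbf{X} + \xi_{\mathbf{X}} + \mathbf{X}\dot{\mathbf{D}}_2$ where $\dot{\mathbf{D}}_i = \text{D}\mathbf{D}_i(\mathbf{X})[\xi_{\mathbf{X}}]$ are diagonal. Writing $\dot{\mathbf{D}}_1 = \text{diag}(\mathbf{a})$, $\dot{\mathbf{D}}_2 = \text{diag}(\mathbf{b})$, the correction term is $\mathbf{a}\mathbf{1}^T\odot\mathbf{X} + \mathbf{1}\mathbf{b}^T\odot\mathbf{X}$, which lies in the normal space — exactly the orthogonal complement structure appearing in \thref{th1}. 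Since $\text{D}\mathcal{P}(\mathbf{X})[\xi_{\mathbf{X}}]$ must remain tangent to $\mathds{D}\mathds{P}_{n}$ (as $\mathcal{P}$ maps into the manifold, so its differential maps into the tangent space) while also equalling $\xi_{\mathbf{X}}$ plus a normal vector, and $\xi_{\mathbf{X}}$ is itself tangent, the normal component must vanish; hence $\text{D}\mathcal{P}(\mathbf{X})[\xi_{\mathbf{X}}]=\xi_{\mathbf{X}}$.

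The main obstacle is making the differentiability of $\mathcal{P}$ at a doubly stochastic point rigorous and justifying that $\dot{\mathbf{D}}_1,\dot{\mathbf{D}}_2$ are well-defined diagonal perturbations — one must appeal to the implicit function theorem applied to the Sinkhorn fixed-point equations (the row/column sum conditions $\mathbf{D}_1\mathbf{A}\mathbf{D}_2\mathbf{1}=\mathbf{1}$ and $\mathbf{D}_2\mathbf{A}^T\mathbf{D}_1\mathbf{1}=\mathbf{1}$), checking the relevant Jacobian is invertible up to the well-known one-dimensional scaling ambiguity $(\mathbf{D}_1,\mathbf{D}_2)\mapsto(c\mathbf{D}_1, c^{-1}\mathbf{D}_2)$, which does not affect the product $\mathbf{D}_1\mathbf{A}\mathbf{D}_2$. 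An alternative, cleaner route that sidesteps the explicit differentiation: observe that any smooth map $\mathcal{P}$ from a neighborhood of $\mathds{D}\mathds{P}_{n}$ in $\overline{\mathds{R}}^{n\times n}$ onto $\mathds{D}\mathds{P}_{n}$ that fixes $\mathds{D}\mathds{P}_{n}$ pointwise is automatically a smooth retraction of the embedding onto the manifold, so its differential at $\mathbf{X}$ restricted to $\mathcal{T}_{\mathbf{X}}\mathds{D}\mathds{P}_{n}$ is the identity; composing with the inner curve $\beta$ whose velocity is $\xi_{\mathbf{X}}$ then gives rigidity directly. I would present the argument in this second form, with the Sinkhorn-specific smoothness and the fixed-point property cited from \cite{8541523974}, relegating the Jacobian computation to the appendix.
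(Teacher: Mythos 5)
Your proposal is correct, and its skeleton (centering is immediate; rigidity via a first-order analysis of the Sinkhorn scaling $\mathcal{P}(\mathbf{A})=\mathbf{D}_1\mathbf{A}\mathbf{D}_2$ with $\mathbf{D}_1=\mathbf{D}_2=\mathbf{I}$ at a doubly stochastic point, composed with the inner curve whose velocity is $\xi_{\mathbf{X}}$) matches the paper's proof; the difference is in how the correction term $\dot{\mathbf{D}}_1\mathbf{X}+\mathbf{X}\dot{\mathbf{D}}_2$ is eliminated. The paper writes the row- and column-sum constraints explicitly and shows that $(\partial\mathbf{d}_1,\partial\mathbf{d}_2)$ lies in the null space of the block matrix $\bigl(\begin{smallmatrix}\mathbf{I} & \mathbf{X}\\ \mathbf{X}^T & \mathbf{I}\end{smallmatrix}\bigr)$, which is spanned by $(\mathbf{1},-\mathbf{1})$, so the correction vanishes identically. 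Your first variant instead observes that the correction has exactly the form $(\mathbf{a}\mathbf{1}^T+\mathbf{1}\mathbf{b}^T)\odot\mathbf{X}$ of the orthogonal complement characterized in \appref{app2} (the lemma underlying \thref{th1}), while $\text{D}\mathcal{P}(\mathbf{X})[\xi_{\mathbf{X}}]$ must be tangent, so the normal part is zero; your preferred second route is even shorter: a smooth map into $\mathds{D}\mathds{P}_{n}$ that fixes the manifold pointwise has identity differential on each tangent space, since differentiating $\mathcal{P}(c(t))=c(t)$ along any manifold curve through $\mathbf{X}$ gives the claim without touching the diagonal factors at all. What each buys: the paper's computation is self-contained and exposes the structure of the scaling perturbation (reused elsewhere, e.g., in \rref{rm1} and the projection system), whereas your argument is more economical and reuses existing machinery. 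One point in your favor: you correctly flag that the smoothness of $\mathbf{D}_1(\mathbf{A}),\mathbf{D}_2(\mathbf{A})$ near a doubly stochastic point needs an implicit-function-theorem argument with the scaling ambiguity $(c\mathbf{D}_1,c^{-1}\mathbf{D}_2)$ normalized away; the paper invokes this smoothness only implicitly through \appref{app3}, so your treatment is, if anything, more careful on the one analytic step both proofs depend on.
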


\begin{proof}
To show that the operator represents a well-defined retraction, one needs to demonstrate that the centering and local rigidity conditions are satisfied. The fact that the mapping is obtained from the smoothness of the projection onto the set of doubly stochastic matrices which is provided in \appref{app3}. The centering property is straightforward, i.e.,:
\begin{align}
R_{\mathbf{X}}(\mathbf{0}) &= \mathcal{P} \left( \mathbf{X} \odot\text{exp}(\mathbf{0})\right) = \mathcal{P} \left( \mathbf{X}\right) = \mathbf{X},
\end{align}
wherein the last inequality is obtained from the fact that $\mathbf{X}$ is a doubly stochastic matrix. 

To prove the local rigidity condition, one needs to study the perturbation of $\mathcal{P} ( \mathbf{X})$ around a ``small" perturbation $\partial \mathbf{X}$ in the tangent space $\mathcal{T}\mathds{D}\mathds{P}_{n}$ wherein small refers to the fact that $\mathbf{X}+\partial \mathbf{X} \in \overline{\mathds{R}}^{n \times n}$. First note from the Sinkhorn-Knopp algorithm that $\mathcal{P} ( \mathbf{X})=\mathbf{D}_1\mathbf{X}\mathbf{D}_2$. However, since $\mathbf{X}$ is already doubly stochastic, then $\mathbf{D}_1=\mathbf{D}_2=\mathbf{I}$. The first order approximation of the $\mathcal{P}$ can be written as:
\begin{align}
&\mathcal{P}(\mathbf{X}+\partial \mathbf{X}) = (\mathbf{D}_1+\partial\mathbf{D}_1)(\mathbf{X}+\partial\mathbf{X})(\mathbf{D}_2+\partial\mathbf{D}_2)  \nonumber \\
&\qquad\approx \mathbf{D}_1\mathbf{X}\mathbf{D}_2 + \mathbf{D}_1\partial\mathbf{X}\mathbf{D}_2 +\partial\mathbf{D}_1\mathbf{X}\mathbf{D}_2 +\mathbf{D}_1\mathbf{X}\partial\mathbf{D}_2 \nonumber \\
&\qquad\approx \mathbf{X} +\partial\mathbf{X} +\partial\mathbf{D}_1\mathbf{X} +\mathbf{X}\partial\mathbf{D}_2
\end{align} 
Since $\mathcal{P}(\mathbf{X}+\partial \mathbf{X})$ and $\mathbf{X}$ are doubly stochastic and $\partial\mathbf{X}$ is in the tangent space, then we obtain:
\begin{align}
&\mathcal{P}(\mathbf{X}+\partial \mathbf{X}) \mathbf{1} = (\mathbf{X} +\partial\mathbf{X} +\partial\mathbf{D}_1\mathbf{X} +\mathbf{X}\partial\mathbf{D}_2)  \mathbf{1} \Rightarrow \nonumber \\
&\partial\mathbf{D}_1\mathbf{X}\mathbf{1} +\mathbf{X}\partial\mathbf{D}_2 \mathbf{1} = \partial\mathbf{D}_1\mathbf{1} +\mathbf{X}\partial\mathbf{D}_2 \mathbf{1} = \mathbf{0}
\end{align}
Similarly, by post multiplying by $\mathbf{1}^T$, we obtain $\mathbf{1}^T\partial\mathbf{D}_1\mathbf{X} +\mathbf{1}^T\partial\mathbf{D}_2= \mathbf{0}^T$. For easy of notation, let $\partial\mathbf{D}_1\mathbf{1} = \partial\mathbf{d}_1$, i.e., $\partial\mathbf{d}_1$ is the vector created from the diagonal entries of $\partial\mathbf{D}_1$ and the same for $\partial\mathbf{D}_2$. Combining both equations above, the perturbation on the diagonal matrices satisfy the condition:
\begin{align}
\begin{pmatrix}
\mathbf{I} & \mathbf{X} \\
\mathbf{X}^T & \mathbf{I}
\end{pmatrix} 
\begin{pmatrix}
\partial\mathbf{d}_1  \\
\partial\mathbf{d}_2
\end{pmatrix} 
= \begin{pmatrix}
\mathbf{0} \\
\mathbf{0}
\end{pmatrix} 
\end{align}
In other words, $\begin{pmatrix}
\partial\mathbf{d}_1  \\
\partial\mathbf{d}_2
\end{pmatrix}$ is the null space of the above matrix which is generated by $\begin{pmatrix}
\mathbf{1} \\
-\mathbf{1}
\end{pmatrix}$ from the previous analysis. As a result, $\partial\mathbf{d}_1=-\partial\mathbf{d}_2=c \mathbf{1}$ which gives $\partial\mathbf{D}_1\mathbf{X} +\mathbf{X}\partial\mathbf{D}_2 = \mathbf{0}$. Therefore, $\mathcal{P}(\mathbf{X}+\partial \mathbf{X}) \approx \mathbf{X} +\partial\mathbf{X}$. Now, consider the curve $\gamma_{\xi_{\mathbf{X}}}(\tau) = R_{\mathbf{X}}(\tau\xi_{\mathbf{X}})$. The derivative of the curve at the origin can be written as:
\begin{align}
\cfrac{d\gamma_{\xi_{\mathbf{X}}}(\tau)}{d\tau}\Big|_{\tau=0} &= \lim_{\tau \rightarrow 0} \cfrac{\gamma_{\xi_{\mathbf{X}}}(\tau)-\gamma_{\xi_{\mathbf{X}}}(0)}{\tau} \nonumber \\
&= \lim_{\tau \rightarrow 0} \cfrac{\mathcal{P} ( \mathbf{X} \odot\text{exp}(\tau\xi_{\mathbf{X}} \oslash \mathbf{X}))-\mathbf{X}}{\tau} \label{eq:lem25}
\end{align}
A first order approximation of the exponential allows to express the first term in the denominator as:
\begin{align}
\mathcal{P} ( \mathbf{X} \odot\text{exp}(\tau\xi_{\mathbf{X}} \oslash \mathbf{X})) = \mathcal{P} ( \mathbf{X} + \tau\xi_{\mathbf{X}}) = \mathbf{X} + \tau\xi_{\mathbf{X}}
\end{align}
wherein the last equality is obtained from the previous analysis. Plugging the expression in the limit expression shows the local rigidity condition, i.e.,:
\begin{align}
\cfrac{d\gamma_{\xi_{\mathbf{X}}}(\tau)}{d\tau}\Big|_{\tau=0} &= \xi_{\mathbf{X}}
\end{align}
Therefore, $R_{\mathbf{X}}(\xi_{\mathbf{X}})$ is a retraction on the doubly stochastic multinomial manifold.
\end{proof}

Note that the above retraction resembles the one proposed in \cite{7182334} (without proof) for the trivial case of the multinomial manifold. The projection on the set of doubly stochastic matrices is more involved as shown in the lemma above. Further, note that the retraction does not require the tangent vector $\xi_{\mathbf{X}}$ to be in the neighborhood of $\mathbf{X}$ as the one derived in \thref{th4}. However, it is more expensive to compute as it requires projecting the update onto the manifold.

\begin{remark}
The local rigidity condition of the retraction in \eref{eq:lem25} is particularly interesting as it shows that the canonical retraction $\mathbf{X} + \xi_{\mathbf{X}}$ is the first order approximation of the retraction $\mathcal{P} \left( \mathbf{X} \odot\text{exp}(\xi_{\mathbf{X}} \oslash \mathbf{X}) \right)$ around the origin.
\end{remark}

\subsection{Connection and Riemannian Hessian Computation}

As shown in \sref{sec:opt}, the computation of the Riemannian Hessian requires the derivation of the Levi-Civita connection $\nabla_{\eta_{\mathbf{X}}} \xi_{\mathbf{X}}$. Using the result of \cite{AbsMahSep2008}, the Levi-Civita connection of a submanifold $\mathcal{M}$ of the Euclidean space $\mathds{R}^{n \times n}$ can be obtained by projecting the Levi-Civita $\overline{\nabla}_{\eta_{\mathbf{X}}} \xi_{\mathbf{X}}$ of the embedding space onto the manifold, i.e., $\nabla_{\eta_{\mathbf{X}}} \xi_{\mathbf{X}}= \Pi_\mathbf{X}(\overline{\nabla}_{\eta_{\mathbf{X}}} \xi_{\mathbf{X}})$. From the Koszul formula \eref{eq:15}, the connection $\overline{\nabla}_{\eta_{\mathbf{X}}} \xi_{\mathbf{X}}$ on $\mathds{R}^{n \times n}$ solely depends on the Riemannian metric. In other words, the connection $\overline{\nabla}_{\eta_{\mathbf{X}}} \xi_{\mathbf{X}}$ on the embedding space is the same for all the considered manifolds in this paper. For manifolds endowed with the Fisher information as metric, the Levi-Civita connection on $\mathds{R}^{n \times n}$ is given in \cite{7182334} as follows:
\begin{proposition}
The Levi-Civita connection on the Euclidean space $\mathds{R}^{n \times n}$ endowed with the Fisher information is given by:
\begin{align}
\overline{\nabla}_{\eta_{\mathbf{X}}} \xi_{\mathbf{X}} = \text{D}(\xi_{\mathbf{X}})[\eta_{\mathbf{X}}] - \cfrac{1}{2} (\eta_{\mathbf{X}} \odot \xi_{\mathbf{X}}) \oslash \mathbf{X}
\end{align}
\end{proposition}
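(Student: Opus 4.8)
The plan is to invoke the uniqueness part of \dref{def:1}: it suffices to verify that the claimed expression defines an affine connection and that it is both torsion-free and compatible with the Fisher metric. Since the Fisher information metric is only defined where all entries of $\mathbf{X}$ are positive, the relevant ambient ``space'' here is the open positive orthant of $\mathds{R}^{n\times n}$, which carries the identity map as a global chart; hence every tangent vector is an ordinary $n\times n$ matrix and all quantities may be manipulated entry-wise. First I would confirm the three affine-connection axioms for $\overline{\nabla}_{\eta_\mathbf{X}}\xi_\mathbf{X} = \text{D}(\xi_\mathbf{X})[\eta_\mathbf{X}] - \tfrac12(\eta_\mathbf{X}\odot\xi_\mathbf{X})\oslash\mathbf{X}$: $C^\infty(\mathcal{M})$-linearity in the lower argument and $\mathds{R}$-linearity in the upper argument are immediate, because $\text{D}(\xi_\mathbf{X})[\eta_\mathbf{X}]$ is tensorial in $\eta_\mathbf{X}$ and linear in $\xi_\mathbf{X}$ while the Hadamard term is bilinear; and the Leibniz rule $\overline{\nabla}_{\eta_\mathbf{X}}(f\xi_\mathbf{X}) = (\eta_\mathbf{X} f)\xi_\mathbf{X} + f\,\overline{\nabla}_{\eta_\mathbf{X}}\xi_\mathbf{X}$ follows from the product rule applied to $\text{D}(f\xi_\mathbf{X})[\eta_\mathbf{X}]$, the correction term simply scaling by $f$.

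Next I would check the torsion-free condition. Because $\odot$ is commutative, the term $-\tfrac12(\eta_\mathbf{X}\odot\xi_\mathbf{X})\oslash\mathbf{X}$ is symmetric under exchanging $\eta_\mathbf{X}$ and $\xi_\mathbf{X}$, so it cancels in $\overline{\nabla}_{\eta_\mathbf{X}}\xi_\mathbf{X} - \overline{\nabla}_{\xi_\mathbf{X}}\eta_\mathbf{X}$, leaving $\text{D}(\xi_\mathbf{X})[\eta_\mathbf{X}] - \text{D}(\eta_\mathbf{X})[\xi_\mathbf{X}]$, which is exactly $[\eta_\mathbf{X},\xi_\mathbf{X}]$ in the implicit-directional-differentiation form recorded just after \dref{def:1}. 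For metric compatibility I would expand $\chi_\mathbf{X}\langle\eta_\mathbf{X},\xi_\mathbf{X}\rangle_\mathbf{X}$ by differentiating $\sum_{i,j}(\eta_\mathbf{X})_{ij}(\xi_\mathbf{X})_{ij}/\mathbf{X}_{ij}$ along $\chi_\mathbf{X}$; the product rule produces a term differentiating $\eta_\mathbf{X}$, a term differentiating $\xi_\mathbf{X}$, and a term from $\text{D}(1/\mathbf{X}_{ij})[\chi_\mathbf{X}] = -(\chi_\mathbf{X})_{ij}/\mathbf{X}_{ij}^2$. On the other side, $\langle\overline{\nabla}_{\chi_\mathbf{X}}\eta_\mathbf{X},\xi_\mathbf{X}\rangle_\mathbf{X} + \langle\eta_\mathbf{X},\overline{\nabla}_{\chi_\mathbf{X}}\xi_\mathbf{X}\rangle_\mathbf{X}$ reproduces the first two terms directly, and its two Hadamard contributions each equal $-\tfrac12\sum_{i,j}(\chi_\mathbf{X})_{ij}(\eta_\mathbf{X})_{ij}(\xi_\mathbf{X})_{ij}/\mathbf{X}_{ij}^2$, whose sum is precisely the third term. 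Hence both defining identities hold, and the uniqueness in \dref{def:1} forces the displayed formula.

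A quicker, equivalent route would read the connection off the Koszul formula \eref{eq:15}: in the global coordinates the Fisher metric is diagonal with $g_{(ij),(ij)} = 1/\mathbf{X}_{ij}$ depending only on the single coordinate $\mathbf{X}_{ij}$, so every partial derivative of the metric except $\partial_{(ij)} g_{(ij),(ij)}$ vanishes, and the only nonzero Christoffel symbol is $\Gamma^{(ij)}_{(ij),(ij)} = -1/(2\mathbf{X}_{ij})$, which is exactly the coefficient appearing in the Hadamard correction. Either way, the one place that needs care — and what I would flag as the main obstacle — is the term obtained by differentiating the weight $1/\mathbf{X}_{ij}$ inside the metric: pinning down its sign together with the compensating factor $\tfrac12$ is precisely what fixes the coefficient of $(\eta_\mathbf{X}\odot\xi_\mathbf{X})\oslash\mathbf{X}$, and it is the step where one must keep in mind that $\xi_\mathbf{X}$ and $\eta_\mathbf{X}$ are vector fields, so that $\text{D}(\xi_\mathbf{X})[\eta_\mathbf{X}]$ genuinely records their variation over the manifold. (The formula is quoted from \cite{7182334}; the argument above supplies the verification.)
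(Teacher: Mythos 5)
Your verification is correct and follows essentially the same route as the paper: check the torsion-free identity (the symmetric Hadamard term cancels in $\overline{\nabla}_{\eta}\xi-\overline{\nabla}_{\xi}\eta$, leaving the Lie bracket) and check metric compatibility by differentiating $\sum_{i,j}\eta_{ij}\xi_{ij}/\mathbf{X}_{ij}$, with the $-\chi_{ij}/\mathbf{X}_{ij}^2$ term absorbed by the two halves of the $-\tfrac12$ correction, then invoke uniqueness of the Levi-Civita connection. Your added remarks on the affine-connection axioms and the Christoffel-symbol reading $\Gamma^{(ij)}_{(ij),(ij)}=-1/(2\mathbf{X}_{ij})$ are consistent extras, not a different method.
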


\begin{proof}
The Levi-Civita connection is computed in \cite{7182334} using the Koszul formula. For completeness, this short proof shows that the connection do satisfy the conditions proposed in \dref{def:1}. Since the embedding space is a Euclidean space, the Lie bracket can be written as directional derivatives as follows:
\begin{align}
[\eta_{\mathbf{X}},\xi_{\mathbf{X}}] &= \text{D}(\xi_{\mathbf{X}})[\eta_{\mathbf{X}}] -\text{D}(\eta_{\mathbf{X}})[\xi_{\mathbf{X}}] \nonumber \\
&= \text{D}(\xi_{\mathbf{X}})[\eta_{\mathbf{X}}] -\cfrac{1}{2} (\eta_{\mathbf{X}} \odot \xi_{\mathbf{X}}) \oslash \mathbf{X} \nonumber \\
&\qquad \qquad- \left( \text{D}(\eta_{\mathbf{X}})[\xi_{\mathbf{X}}] -\cfrac{1}{2} (\eta_{\mathbf{X}} \odot \xi_{\mathbf{X}}) \oslash \mathbf{X} \right) \nonumber \\
&= \overline{\nabla}_{\eta_{\mathbf{X}}} \xi_{\mathbf{X}} - \overline{\nabla}_{\xi_{\mathbf{X}}} \eta_{\mathbf{X}}
\end{align}
The second property is obtained by direct computation of the right and left hand sides in \dref{def:1}. For simplicity of the notation, all the tangent vectors $\chi_{\mathbf{X}},\eta_{\mathbf{X}},\xi_{\mathbf{X}}$ in the tangent space generated by $\mathbf{X}$ are written without the subscript. The left hand side gives:
\begin{align}
&\chi \langle \eta, \xi \rangle_{\mathbf{X}} = \text{D} (\langle \eta, \xi \rangle_{\mathbf{X}})[\chi] \nonumber \\
&= \sum_{i=1}^n \sum_{j=1}^n \text{D} \left(  \cfrac{\eta_{ij} \xi_{ij}}{\mathbf{X}_{ij}} \right)[\chi_{ij}] \nonumber \\
&= \sum_{i,j=1}^n \left( \cfrac{\eta_{ij}}{\mathbf{X}_{ij}} \text{D}  \left(  \xi_{ij}  \right) + \cfrac{ \xi_{ij}}{\mathbf{X}_{ij}} \text{D} \left(  \eta_{ij} \right) +\eta_{ij} \xi_{ij}\text{D} \left(  \cfrac{1}{\mathbf{X}_{ij}} \right)  \right) [\chi_{ij}] \nonumber \\
&=\langle \text{D}_{\chi}\eta, \xi \rangle_{\mathbf{X}} + \langle \eta, \text{D}_{\chi}\xi \rangle_{\mathbf{X}} - \sum_{i=1}^n \sum_{j=1}^n \cfrac{\eta_{ij} \xi_{ij}\chi_{ij}}{\mathbf{X}^2_{ij}}  \nonumber \\ 
&=\langle \overline{\nabla}_\chi \eta,\xi \rangle_{\mathbf{X}} + \langle\eta ,  \overline{\nabla}_\chi \xi \rangle_{\mathbf{X}}.
\end{align}
\end{proof}

Recall that the Euclidean Hessian $\text{Hess }f(\mathbf{X})[\xi_{\mathbf{X}}] = \text{D}(\text{Grad }f(\mathbf{X}))[\xi_{\mathbf{X}}]$ is defined as the directional derivative of the Euclidean gradient. Using the results above, the Riemannian Hessian can be written as a function of the Euclidean gradient and Hessian as follows:
\begin{theorem}
The Riemannian Hessian $\text{hess }f(\mathbf{X})[\xi_{\mathbf{X}}]$ can be obtained from the Euclidean gradient $\text{Grad }f(\mathbf{X})$ and the Euclidean Hessian $\text{Hess }f(\mathbf{X})[\xi_{\mathbf{X}}]$ using the identity:
\begin{align}
&\text{hess }f(\mathbf{X})[\xi_{\mathbf{X}}] = \Pi_\mathbf{X}\left( \dot{\delta} - \cfrac{1}{2}(\delta \odot \xi_{\mathbf{X}}) \oslash \mathbf{X}\right)\nonumber \\
&\alpha = \epsilon(\gamma - \mathbf{X}\gamma^T)\mathbf{1} \nonumber \\
&\beta = \gamma^T\mathbf{1} - \mathbf{X}^{T}\alpha \nonumber \\
&\gamma=\text{Grad }f(\mathbf{X}) \odot \mathbf{X} \nonumber \\
&\delta = \gamma - (\alpha \mathbf{1}^T + \mathbf{1} \beta^T ) \odot \mathbf{X} \nonumber \\
&\epsilon = (\mathbf{I} - \mathbf{X}\mathbf{X}^T)^{\dagger} \nonumber \\
&\dot{\alpha} = \left[ \dot{\epsilon}(\gamma - \mathbf{X}\gamma^T) + \epsilon(\dot{\gamma}-\xi_{\mathbf{X}}\gamma-\mathbf{X}\dot{\gamma}^T)\right]\mathbf{1} \nonumber \\
&\dot{\beta}=\dot{\gamma}^T\mathbf{1} - \xi_{\mathbf{X}}^T\alpha- \mathbf{X}^{T}\dot{\alpha} \nonumber \\
&\dot{\gamma} = \text{Hess }f(\mathbf{X})[\xi_{\mathbf{X}}]\odot \mathbf{X} + \text{Grad }f(\mathbf{X}) \odot \xi_{\mathbf{X}} \nonumber \\
&\dot{\delta} = \dot{\gamma} -(\dot{\alpha} \mathbf{1}^T + \mathbf{1} \dot{\beta}^T ) \odot \mathbf{X} - (\alpha \mathbf{1}^T + \mathbf{1} \beta^T ) \odot \xi_{\mathbf{X}} \nonumber \\
&\dot{\epsilon} = \epsilon(\mathbf{X}\xi_{\mathbf{X}}^T + \xi_{\mathbf{X}} \mathbf{X}^T)\epsilon
\end{align}
\label{th6}
\end{theorem}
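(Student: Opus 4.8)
The plan is to assemble three facts already in hand: the definition $\text{hess }f(\mathbf{X})[\xi_{\mathbf{X}}]=\nabla_{\xi_{\mathbf{X}}}\text{grad }f(\mathbf{X})$ of the Riemannian Hessian; the identity $\nabla_{\xi_{\mathbf{X}}}\zeta = \Pi_\mathbf{X}(\overline{\nabla}_{\xi_{\mathbf{X}}}\zeta)$ expressing the Levi-Civita connection of the embedded manifold as the orthogonal projection of the ambient one; and the closed form $\overline{\nabla}_{\eta_{\mathbf{X}}}\zeta_{\mathbf{X}} = \text{D}(\zeta_{\mathbf{X}})[\eta_{\mathbf{X}}]-\tfrac{1}{2}(\eta_{\mathbf{X}}\odot\zeta_{\mathbf{X}})\oslash\mathbf{X}$ of the ambient Levi-Civita connection under the Fisher metric (the Proposition above). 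Taking $\zeta=\text{grad }f(\mathbf{X})$ and noting that, by \eref{eq:258}, $\text{grad }f(\mathbf{X})$ is exactly the quantity denoted $\delta$ in the statement (since $\mathbf{1}\beta^T=\mathbf{1}\mathbf{1}^T\gamma-\mathbf{1}\alpha^T\mathbf{X}$ when $\beta=\gamma^T\mathbf{1}-\mathbf{X}^T\alpha$), these combine into
\begin{align}
\text{hess }f(\mathbf{X})[\xi_{\mathbf{X}}] &= \Pi_\mathbf{X}\!\left(\text{D}(\text{grad }f(\mathbf{X}))[\xi_{\mathbf{X}}]-\tfrac{1}{2}(\xi_{\mathbf{X}}\odot\text{grad }f(\mathbf{X}))\oslash\mathbf{X}\right) \nonumber \\
&= \Pi_\mathbf{X}\!\left(\dot{\delta}-\tfrac{1}{2}(\delta\odot\xi_{\mathbf{X}})\oslash\mathbf{X}\right),
\end{align}
so the whole theorem reduces to identifying $\dot{\delta}:=\text{D}(\text{grad }f(\mathbf{X}))[\xi_{\mathbf{X}}]$, the Euclidean directional derivative along $\xi_{\mathbf{X}}$ of the map $\mathbf{X}\mapsto\text{grad }f(\mathbf{X})$ written out in \eref{eq:258}.

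Next I would compute $\dot{\delta}$ by differentiating the chain of expressions in \eref{eq:258} in order, using only the product rule (for the matrix and Hadamard products), the chain rule, and two elementary observations: $\text{D}(\mathbf{X})[\xi_{\mathbf{X}}]=\xi_{\mathbf{X}}$ since the coordinate map is linear, and $\text{D}(\text{Grad }f(\mathbf{X}))[\xi_{\mathbf{X}}]=\text{Hess }f(\mathbf{X})[\xi_{\mathbf{X}}]$ by the definition of the Euclidean Hessian as the derivative of the Euclidean gradient. Differentiating $\gamma=\text{Grad }f(\mathbf{X})\odot\mathbf{X}$ gives $\dot{\gamma}=\text{Hess }f(\mathbf{X})[\xi_{\mathbf{X}}]\odot\mathbf{X}+\text{Grad }f(\mathbf{X})\odot\xi_{\mathbf{X}}$; differentiating $\epsilon=(\mathbf{I}-\mathbf{X}\mathbf{X}^T)^{\dagger}$ through $\text{D}(\mathbf{A}^{\dagger})=-\mathbf{A}^{\dagger}(\text{D}\mathbf{A})\mathbf{A}^{\dagger}$ together with $\text{D}(\mathbf{I}-\mathbf{X}\mathbf{X}^T)[\xi_{\mathbf{X}}]=-(\xi_{\mathbf{X}}\mathbf{X}^T+\mathbf{X}\xi_{\mathbf{X}}^T)$ gives $\dot{\epsilon}=\epsilon(\mathbf{X}\xi_{\mathbf{X}}^T+\xi_{\mathbf{X}}\mathbf{X}^T)\epsilon$; the product rule applied to $\alpha=\epsilon(\gamma-\mathbf{X}\gamma^T)\mathbf{1}$ (with $\text{D}(\mathbf{X}\gamma^T)[\xi_{\mathbf{X}}]=\xi_{\mathbf{X}}\gamma^T+\mathbf{X}\dot{\gamma}^T$) yields $\dot{\alpha}$, to $\beta=\gamma^T\mathbf{1}-\mathbf{X}^T\alpha$ yields $\dot{\beta}=\dot{\gamma}^T\mathbf{1}-\xi_{\mathbf{X}}^T\alpha-\mathbf{X}^T\dot{\alpha}$, and to $\delta=\gamma-(\alpha\mathbf{1}^T+\mathbf{1}\beta^T)\odot\mathbf{X}$ yields $\dot{\delta}=\dot{\gamma}-(\dot{\alpha}\mathbf{1}^T+\mathbf{1}\dot{\beta}^T)\odot\mathbf{X}-(\alpha\mathbf{1}^T+\mathbf{1}\beta^T)\odot\xi_{\mathbf{X}}$. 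These are precisely the auxiliary quantities listed in the statement, and substituting $\dot{\delta}$ into the displayed identity above finishes the proof.

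The one point requiring genuine care --- and which I expect to be the main obstacle, as opposed to the mechanical product-rule bookkeeping --- is justifying the first step and the differentiation of the pseudo-inverse. The ambient connection formula is valid for smooth vector fields, so $\text{grad }f$ must be viewed not as an abstract section of the tangent bundle but as the restriction to $\mathds{D}\mathds{P}_{n}$ of the smooth $\mathds{R}^{n\times n}$-valued map defined by \eref{eq:258} on the open neighborhood where $(\mathbf{I}-\mathbf{X}\mathbf{X}^T)^{\dagger}$ has locally constant rank and is therefore smooth; the standard fact that $\overline{\nabla}_{\xi_{\mathbf{X}}}\zeta$ depends on $\zeta$ only through its value and first derivative at $\mathbf{X}$ makes the choice of smooth extension irrelevant. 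Moreover, $\mathbf{X}$ doubly stochastic forces $\mathbf{1}$ into the kernel of $\mathbf{I}-\mathbf{X}\mathbf{X}^T$ (indeed $\mathbf{X}\mathbf{X}^T\mathbf{1}=\mathbf{X}\mathbf{1}=\mathbf{1}$), so that matrix is genuinely rank-deficient; one must therefore check that the extra Moore--Penrose correction terms in $\text{D}(\mathbf{A}^{\dagger})$ are either absent or killed by the subsequent multiplications, using that the quantities fed into $\epsilon$ lie in the subspace on which it acts as a true inverse --- a fact that follows from $\xi_{\mathbf{X}}\in\mathcal{T}_\mathbf{X}\mathds{D}\mathds{P}_{n}$ satisfying $\xi_{\mathbf{X}}\mathbf{1}=\mathbf{0}$ and $\mathbf{1}^T\xi_{\mathbf{X}}=\mathbf{0}^T$ --- or, equivalently, replace $\epsilon$ by the numerically stable linear system of \rref{rm1} and differentiate it implicitly to recover the same $\dot{\alpha},\dot{\beta}$. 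Everything else is routine.
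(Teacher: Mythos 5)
Your proposal follows essentially the same route as the paper's proof: write $\text{hess }f(\mathbf{X})[\xi_{\mathbf{X}}]=\Pi_\mathbf{X}(\overline{\nabla}_{\xi_{\mathbf{X}}}\text{grad }f(\mathbf{X}))$ using the projected Levi-Civita connection under the Fisher metric, identify $\text{grad }f(\mathbf{X})$ with $\delta$, and then obtain $\dot{\delta}$ by mechanically differentiating the chain $\gamma,\epsilon,\alpha,\beta,\delta$ with the product rule. The only divergence is in computing $\dot{\epsilon}$: you invoke the generic rule $\text{D}(\mathbf{A}^{\dagger})=-\mathbf{A}^{\dagger}(\text{D}\mathbf{A})\mathbf{A}^{\dagger}$ while (correctly) flagging that $\mathbf{I}-\mathbf{X}\mathbf{X}^T$ is rank-deficient so the Moore--Penrose correction terms must be checked, whereas the paper proves a Kailath-type perturbation lemma for the left pseudo-inverse and passes to the limit --- both yielding the same $\dot{\epsilon}=\epsilon(\mathbf{X}\xi_{\mathbf{X}}^T+\xi_{\mathbf{X}}\mathbf{X}^T)\epsilon$, and your caveat is if anything more careful than the paper's treatment of this point.
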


\begin{proof}
The useful results to compute the Riemannian Hessian for the manifold of interest in this paper can be found in \appref{app4}. The expression of the Riemannian Hessian for the doubly stochastic multinomial manifold can be found in the first subsection of the appendix.
\end{proof}

\section{The Symmetric Multinomial Manifold} \label{sec:the2}

Whereas the doubly stochastic multinomial manifold is regarded as an embedded manifold of the vector space of matrices $\mathds{R}^{n \times n}$, the symmetric and positive multinomial manifolds are seen as embedded manifolds of the set of symmetric matrices. In other words, the embedding Euclidean space is the space of symmetric matrices $\mathcal{S}_n$ defined as:
\begin{align}
\mathcal{S}_n &= \left\{ \mathbf{X} \in \mathds{R}^{n \times n} \big| \mathbf{X} = \mathbf{X}^T  \right\} 
\end{align}

Such choice of the ambient space allows to reduce the ambient dimension from $n^2$ to $\frac{n(n+1)}{2}$ and thus enables the simplification of the projection operators. As a result, the expression of Riemannian gradient and Hessian can be computed more efficiently.

\subsection{Manifold Geometry, Gradient, and Retraction}

Let $\mathbf{X} \in \mathds{S}\mathds{P}_{n}$ be a point on the manifold, the tangent space $\mathcal{T}_{\mathbf{X}}\mathds{S}\mathds{P}_{n}$ is given by the following preposition.
\begin{preposition}
The tangent space $\mathcal{T}_{\mathbf{X}}\mathds{S}\mathds{P}_{n}$ is defined by:
\begin{align}
\mathcal{T}_{\mathbf{X}}\mathds{S}\mathds{P}_{n} = \left\{\mathbf{Z} \in \mathcal{S}_n \big| \mathbf{Z}\mathbf{1}=\mathbf{0}\right\}.
\end{align}
\label{pre2}
\end{preposition}

\begin{proof}
The technique of computing the tangent space of manifold of interest in this paper can be found in \appref{app1}. The complete proof of the expression of the tangent space of symmetric stochastic matrices is located in the second subsection of \appref{app1}.
\end{proof}

Let $\Pi_\mathbf{X}: \mathcal{S}_n \longrightarrow \mathcal{T}_{\mathbf{X}}\mathds{S}\mathds{P}_{n}$ be the orthogonal, in the $\langle.,.\rangle_\mathbf{X}$ sens, projection of the ambient space onto the tangent space. Note that the ambient space for the symmetric multinomial $\mathds{S}\mathds{P}_{n}$ is the set of symmetric matrices $\mathcal{S}_n$ and not the set of all matrices $\mathds{R}^{n \times n}$ as in \sref{sec:the}. The following theorem gives the expression of the projection operator:
\begin{theorem}
The orthogonal projection $\Pi_\mathbf{X}$ operator onto the tangent set has the following expression
\begin{align}
\Pi_\mathbf{X}(\mathbf{Z}) = \mathbf{Z} - (\alpha \mathbf{1}^T + \mathbf{1} \alpha^T ) \odot \mathbf{X},
\end{align}
wherein the vector $\alpha$ is computed as:
\begin{align}
\alpha &= (\mathbf{I} + \mathbf{X})^{-1}\mathbf{Z}\mathbf{1}.
\end{align}
\label{th2}
\end{theorem}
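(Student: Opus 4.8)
The plan is to proceed exactly as in the doubly stochastic case (Theorem~\ref{th1}), but exploiting the extra symmetry so that the two Lagrange-type vectors collapse into one. First I would write any ambient vector $\mathbf{Z} \in \mathcal{S}_n$ as $\Pi_\mathbf{X}(\mathbf{Z}) + \Pi_\mathbf{X}^\perp(\mathbf{Z})$. Since the orthogonal complement of the tangent space inside $\mathcal{S}_n$ (with respect to the Fisher metric $\langle.,.\rangle_\mathbf{X}$) is spanned, via the metric, by the constraint gradients of $\mathbf{Z}\mathbf{1}=\mathbf{0}$, the natural ansatz is $\Pi_\mathbf{X}^\perp(\mathbf{Z}) = (\alpha\mathbf{1}^T + \mathbf{1}\alpha^T)\odot\mathbf{X}$ for some vector $\alpha\in\mathds{R}^n$; note this candidate is automatically symmetric, so it lies in the correct ambient space, and scaling by $\mathbf{X}$ is the usual device that converts an $\langle.,.\rangle$-orthogonal complement into an $\langle.,.\rangle_\mathbf{X}$-orthogonal one (the Fisher metric divides by $\mathbf{X}$). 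I would then verify that with this ansatz, $\Pi_\mathbf{X}(\mathbf{Z}) = \mathbf{Z} - (\alpha\mathbf{1}^T + \mathbf{1}\alpha^T)\odot\mathbf{X}$ indeed lands in $\mathcal{T}_{\mathbf{X}}\mathds{S}\mathds{P}_{n}$ precisely when $\alpha$ is chosen as claimed.

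The key computation is imposing $\Pi_\mathbf{X}(\mathbf{Z})\mathbf{1}=\mathbf{0}$ (the constraint $\Pi_\mathbf{X}(\mathbf{Z})^T\mathbf{1}=\mathbf{0}$ is then free by symmetry). Expanding,
\begin{align}
\mathbf{Z}\mathbf{1} - \big((\alpha\mathbf{1}^T + \mathbf{1}\alpha^T)\odot\mathbf{X}\big)\mathbf{1} = \mathbf{0}.
\end{align}
Here I would use the elementary identities $\big((\alpha\mathbf{1}^T)\odot\mathbf{X}\big)\mathbf{1} = \alpha\odot(\mathbf{X}\mathbf{1}) = \alpha$, since $\mathbf{X}\mathbf{1}=\mathbf{1}$, and $\big((\mathbf{1}\alpha^T)\odot\mathbf{X}\big)\mathbf{1} = \mathbf{X}\alpha$ because the $i$-th entry is $\sum_j X_{ij}\alpha_j$. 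Substituting gives $\mathbf{Z}\mathbf{1} - \alpha - \mathbf{X}\alpha = \mathbf{0}$, i.e. $(\mathbf{I}+\mathbf{X})\alpha = \mathbf{Z}\mathbf{1}$, hence $\alpha = (\mathbf{I}+\mathbf{X})^{-1}\mathbf{Z}\mathbf{1}$ as asserted.

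Two points need a brief justification beyond the algebra. First, the invertibility of $\mathbf{I}+\mathbf{X}$: since $\mathbf{X}$ is entrywise positive and doubly stochastic, its eigenvalues lie in $[-1,1]$ with $-1$ not an eigenvalue (by Perron--Frobenius applied to the irreducible positive matrix $\mathbf{X}$, all eigenvalues other than the simple Perron eigenvalue $1$ are strictly inside the unit disk), so $\mathbf{I}+\mathbf{X}$ is nonsingular and $\alpha$ is well defined and unique. Second, idempotency $\Pi_\mathbf{X}\circ\Pi_\mathbf{X} = \Pi_\mathbf{X}$ and orthogonality: I would check that if $\mathbf{Z}$ is already tangent then $\mathbf{Z}\mathbf{1}=\mathbf{0}$ forces $\alpha=0$, so $\Pi_\mathbf{X}(\mathbf{Z})=\mathbf{Z}$, giving idempotency; and that $\langle(\alpha\mathbf{1}^T + \mathbf{1}\alpha^T)\odot\mathbf{X},\ \mathbf{W}\rangle_\mathbf{X} = \langle\alpha\mathbf{1}^T + \mathbf{1}\alpha^T,\ \mathbf{W}\rangle = 2\alpha^T\mathbf{W}\mathbf{1} = 0$ for every $\mathbf{W}\in\mathcal{T}_{\mathbf{X}}\mathds{S}\mathds{P}_{n}$, confirming the decomposition is $\langle.,.\rangle_\mathbf{X}$-orthogonal. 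The main obstacle, such as it is, is nothing deep: it is making sure the Hadamard-product-times-$\mathbf{1}$ manipulations are done correctly and that the symmetry of the ansatz is used to halve the linear system relative to Theorem~\ref{th1} rather than accidentally dropping a constraint. The general machinery is deferred to \appref{app2}; here the whole content is the reduction $(\mathbf{I}+\mathbf{X})\alpha=\mathbf{Z}\mathbf{1}$.
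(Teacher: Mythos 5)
Your proposal is correct and follows essentially the same route as the paper's proof in Appendix~\ref{app2}: the same ansatz $(\alpha\mathbf{1}^T+\mathbf{1}\alpha^T)\odot\mathbf{X}$ for the orthogonal complement (the paper's Lemma~\ref{lem3}), the same decomposition $\mathbf{Z}=\Pi_\mathbf{X}(\mathbf{Z})+\Pi_\mathbf{X}^\perp(\mathbf{Z})$, and the same row-sum computation yielding $(\mathbf{I}+\mathbf{X})\alpha=\mathbf{Z}\mathbf{1}$. Your explicit orthogonality check and the Perron--Frobenius justification that $\mathbf{I}+\mathbf{X}$ is invertible are small additions the paper leaves implicit, but they do not change the argument.
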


\begin{proof}
Techniques for computing the orthogonal projection on the tangent space for the manifolds of interest in this paper can be found in \appref{app2}. The projection on the tangent space of the doubly stochastic matrices can be found in the second subsection of \appref{app2} and its derivation from the projection of the doubly stochastic manifold in the third subsection.
\end{proof}

Using the result of \lref{lem1} and using the expression of the projection onto the tangent space, the Rienmannian gradient can be efficiently computed as:
\begin{align}
\text{grad }f(\mathbf{X}) &= \gamma - (\alpha \mathbf{1}^T + \mathbf{1} \alpha^T) \odot \mathbf{X} \nonumber \\
\alpha &= (\mathbf{I} + \mathbf{X})^{-1}\gamma\mathbf{1} \nonumber \\
\gamma &= (\text{Grad }f(\mathbf{X}) \odot \mathbf{X}),
\end{align}
where $\gamma$ is a simple sum that can be computed efficiently.

Similar to the result for the doubly stochastic multinomial manifold, the canonical retraction on the symmetric multinomial manifold can be efficiently computed as shown in the following corollary.
\begin{corollary}
The mapping $R: \mathcal{T}\mathds{S}\mathds{P}_{n} \longrightarrow \mathds{S}\mathds{P}_{n} $ whose restriction $R_{\mathbf{X}}$ to $ \mathcal{T}_{\mathbf{X}}\mathds{S}\mathds{P}_{n} $ is given by:
\begin{align}
R_{\mathbf{X}}(\xi_{\mathbf{X}}) = \mathbf{X} + \xi_{\mathbf{X}},
\end{align}
represents a well-defined retraction on the symmetric multinomial manifold provided that $\xi_{\mathbf{X}}$ is in the neighborhood of $\mathbf{0}_{\mathbf{X}}$, i.e., $\mathbf{X}_{ij} > - \left(\xi_{\mathbf{X}}\right)_{ij},\ 1 \leq i,j \leq n$.
\label{cor1}
\end{corollary}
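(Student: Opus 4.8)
The plan is to obtain the corollary as a restriction of \thref{th4}, together with the elementary fact that the symmetric multinomial manifold sits inside the doubly stochastic one in a way that is compatible with the canonical retraction. First I would record the two inclusions $\mathds{S}\mathds{P}_{n} \subseteq \mathds{D}\mathds{P}_{n}$ and $\mathcal{T}_{\mathbf{X}}\mathds{S}\mathds{P}_{n} \subseteq \mathcal{T}_{\mathbf{X}}\mathds{D}\mathds{P}_{n}$. The first is immediate from the definitions. The second follows from \preref{pre2} and \preref{pre1}: any $\mathbf{Z} \in \mathcal{S}_n$ with $\mathbf{Z}\mathbf{1} = \mathbf{0}$ automatically satisfies $\mathbf{Z}^T\mathbf{1} = \mathbf{Z}\mathbf{1} = \mathbf{0}$, hence lies in $\mathcal{T}_{\mathbf{X}}\mathds{D}\mathds{P}_{n}$.

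Next I would check that $R_{\mathbf{X}}$ is well defined as a map into $\mathds{S}\mathds{P}_{n}$. Fix $\mathbf{X} \in \mathds{S}\mathds{P}_{n}$ and $\xi_{\mathbf{X}} \in \mathcal{T}_{\mathbf{X}}\mathds{S}\mathds{P}_{n}$ with $\mathbf{X}_{ij} > -(\xi_{\mathbf{X}})_{ij}$ for all $1 \leq i,j \leq n$. By the inclusions above and \thref{th4}, $\mathbf{X} + \xi_{\mathbf{X}} \in \mathds{D}\mathds{P}_{n}$; equivalently, it has strictly positive entries (the neighborhood condition gives $\mathbf{X}_{ij} + (\xi_{\mathbf{X}})_{ij} > 0$) and all its rows and columns sum to one, since $(\mathbf{X}+\xi_{\mathbf{X}})\mathbf{1} = \mathbf{X}\mathbf{1} + \xi_{\mathbf{X}}\mathbf{1} = \mathbf{1}$ and likewise for the transpose. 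It then remains only to observe that $\mathbf{X} + \xi_{\mathbf{X}}$ is symmetric, which is immediate: $(\mathbf{X}+\xi_{\mathbf{X}})^T = \mathbf{X}^T + \xi_{\mathbf{X}}^T = \mathbf{X} + \xi_{\mathbf{X}}$, as both $\mathbf{X}$ and $\xi_{\mathbf{X}}$ belong to $\mathcal{S}_n$. Hence $R_{\mathbf{X}}(\xi_{\mathbf{X}}) \in \mathds{S}\mathds{P}_{n}$, and $R$ is a smooth mapping from (a neighborhood of the zero section of) $\mathcal{T}\mathds{S}\mathds{P}_{n}$ into the manifold, being the restriction of an affine map on $\mathcal{S}_n$.

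Finally I would verify the two defining properties of a retraction. Centering is trivial: $R_{\mathbf{X}}(\mathbf{0}) = \mathbf{X} + \mathbf{0} = \mathbf{X}$. For local rigidity, the curve $\gamma_{\xi_{\mathbf{X}}}(\tau) = R_{\mathbf{X}}(\tau\xi_{\mathbf{X}}) = \mathbf{X} + \tau\xi_{\mathbf{X}}$ is affine in $\tau$, so $\left. \cfrac{d\gamma_{\xi_{\mathbf{X}}}(\tau)}{d\tau} \right|_{\tau=0} = \xi_{\mathbf{X}}$, which is exactly the required condition. I expect essentially no obstacle beyond bookkeeping; the one point deserving care is that the embedding space for $\mathds{S}\mathds{P}_{n}$ is $\mathcal{S}_n$ rather than $\mathds{R}^{n \times n}$, so one must confirm both that the canonical retraction never leaves $\mathcal{S}_n$ (it does not, since a sum of symmetric matrices is symmetric) and that restricting the doubly stochastic statement of \thref{th4} is legitimate (guaranteed by the tangent space inclusion from \preref{pre2}). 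The substantive work — extending Sinkhorn's theorem to establish well-definedness on $\mathds{D}\mathds{P}_{n}$ — has already been carried out in \thref{th4}, so the symmetric case adds only the one-line symmetry check; alternatively, one may give the same short argument directly on $\mathds{S}\mathds{P}_{n}$ without invoking \thref{th4}.
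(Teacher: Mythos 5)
Your proof is correct, but it takes a genuinely different route from the paper. The paper does \emph{not} restrict \thref{th4} to the symmetric case; it explicitly cautions that the doubly stochastic result is "not directly applicable" because the embedding space changes from $\mathds{R}^{n\times n}$ to $\mathcal{S}_n$, and instead reruns the whole diffeomorphism machinery of \appref{app3}: it replaces Sinkhorn's theorem by the DAD theorem \cite{CSIMA1972147}, builds the map $\phi:\mathds{S}\mathds{P}_n\times\overline{\mathds{R}}^{n}\longrightarrow\overline{\mathcal{S}}_n$, $(\mathbf{A},d)\mapsto\text{diag}(d)\,\mathbf{A}\,\text{diag}(d)$, checks the dimension count $\frac{n(n-1)}{2}+n=\frac{n(n+1)}{2}=\text{dim}(\mathcal{S}_n)$, and then invokes \thref{th3} before observing that $\phi^{-1}$ acts as the identity on $\mathbf{X}+\xi_{\mathbf{X}}$. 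That caution, however, concerns the decomposition argument (the Sinkhorn factorization has the wrong codimension inside $\mathcal{S}_n$), not the elementary set-membership fact you actually borrow from \thref{th4}; your use of the inclusions $\mathds{S}\mathds{P}_{n}\subseteq\mathds{D}\mathds{P}_{n}$ and $\mathcal{T}_{\mathbf{X}}\mathds{S}\mathds{P}_{n}\subseteq\mathcal{T}_{\mathbf{X}}\mathds{D}\mathds{P}_{n}$ plus the one-line symmetry check $(\mathbf{X}+\xi_{\mathbf{X}})^T=\mathbf{X}+\xi_{\mathbf{X}}$ is legitimate, and your direct verification of well-definedness, centering, local rigidity, and smoothness (the map being affine) stands on its own without \thref{th4} at all. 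What your argument buys is brevity and the avoidance of the DAD theorem entirely; what the paper's argument buys is the diffeomorphism $\phi$ itself, which is reused immediately afterwards to define the projection $\mathcal{P}^+$ underlying the second (exponential-based) retraction on $\mathds{S}\mathds{P}_{n}$, so the DAD machinery is not wasted effort in the broader development even though it is dispensable for this particular corollary.
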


\begin{proof}
The proof of this corollary follows similar steps like the one for the doubly stochastic multinomial manifold by considering that the manifold is embedded in an Euclidean subspace. Techniques for computing the retraction on embedded manifold is given in \appref{app3}. Note that the result of the doubly stochastic multinomial is not directly applicable as the embedding space is different ($\mathcal{S}_n$ instead of $\mathds{R}^{n \times n}$). The problem is solved by using the DAD theorem \cite{CSIMA1972147} instead of the Sinkhorn's one \cite{8541523974}. The complete proof of the corollary is given in the second subsection of \appref{app3}. 
\end{proof}

The canonical retraction suffers from the same limitation as the one discussed in the previous section. Indeed, the performance of the optimization algorithm heavily depend on whether the optimal solution has vanishing entries or not. This section shows that the retraction proposed in \lref{lem5} is a valid retraction on the set of symmetric double stochastic matrices. However, instead of the Sinkhorn-Knopp algorithm \cite{8541523974}, this part uses the DAD algorithm \cite{CSIMA1972147} to project the retracted vector. Let $\overline{\mathcal{S}}_n = \left\{ \mathbf{X} \in \mathds{R}^{n \times n} \big|   X_{ij} > 0,\ \mathbf{X} = \mathbf{X}^T  \right\}$ represent the set of symmetric, element-wise positive matrices. The projection onto the set of symmetric doubly stochastic matrices is denoted by the operator $\mathcal{P}^+:\overline{\mathcal{S}}_n \longrightarrow \mathds{S}\mathds{P}_{n}$. The retraction is given in the following corollary.
\begin{corollary}
The mapping $R: \mathcal{T}\mathds{S}\mathds{P}_{n} \longrightarrow \mathds{S}\mathds{P}_{n} $ whose restriction $R_{\mathbf{X}}$ to $ \mathcal{T}_{\mathbf{X}}\mathds{S}\mathds{P}_{n} $ is given by:
\begin{align}
R_{\mathbf{X}}(\xi_{\mathbf{X}}) = \mathcal{P}^+ \left( \mathbf{X} \odot\text{exp}(\xi_{\mathbf{X}} \oslash \mathbf{X}) \right),
\end{align}
is a retraction on the symmetric doubly stochastic multinomial manifold for all $\xi_{\mathbf{X}} \in \mathcal{T}\mathds{S}\mathds{P}_{n}$.
\end{corollary}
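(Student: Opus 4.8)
The plan is to mirror the proof of \lref{lem5} essentially verbatim, since the only change is that the ambient space is now $\mathcal{S}_n$ rather than $\mathds{R}^{n\times n}$ and the projection onto the doubly stochastic polytope is carried out by the DAD algorithm rather than by Sinkhorn--Knopp. First I would verify that the map is well-defined: for $\xi_{\mathbf{X}}\in\mathcal{T}_{\mathbf{X}}\mathds{S}\mathds{P}_{n}$, the matrix $\mathbf{X}\odot\text{exp}(\xi_{\mathbf{X}}\oslash\mathbf{X})$ is entry-wise positive and symmetric (because $\mathbf{X}$ and $\xi_{\mathbf{X}}$ are both symmetric, Hadamard operations and the element-wise exponential preserve symmetry), hence lies in $\overline{\mathcal{S}}_n$, so $\mathcal{P}^+$ applies and returns a point of $\mathds{S}\mathds{P}_{n}$. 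Smoothness of the composite map follows from the smoothness of $\mathcal{P}^+$, which is the content of the DAD results invoked in \appref{app3}; I would cite \cite{CSIMA1972147} and that appendix exactly as the corollary statement for \cref{cor1} does.

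Next I would check the centering condition: $R_{\mathbf{X}}(\mathbf{0})=\mathcal{P}^+(\mathbf{X}\odot\text{exp}(\mathbf{0}))=\mathcal{P}^+(\mathbf{X})=\mathbf{X}$, the last equality because $\mathbf{X}$ is already symmetric doubly stochastic and so is a fixed point of $\mathcal{P}^+$. For local rigidity the argument of \lref{lem5} transfers with one simplification. Writing $\mathcal{P}^+(\mathbf{X})=\mathbf{D}\mathbf{X}\mathbf{D}$ for the DAD scaling (a single diagonal matrix on both sides, by symmetry), the fact that $\mathbf{X}$ is already symmetric doubly stochastic forces $\mathbf{D}=\mathbf{I}$; a first-order perturbation $\mathbf{X}+\partial\mathbf{X}$ with $\partial\mathbf{X}\in\mathcal{T}_{\mathbf{X}}\mathds{S}\mathds{P}_{n}$ then gives $\mathcal{P}^+(\mathbf{X}+\partial\mathbf{X})\approx\mathbf{X}+\partial\mathbf{X}+\partial\mathbf{D}\,\mathbf{X}+\mathbf{X}\,\partial\mathbf{D}$, and imposing that the result is doubly stochastic while $\partial\mathbf{X}\mathbf{1}=\mathbf{0}$ yields $(\mathbf{I}+\mathbf{X})\partial\mathbf{d}=\mathbf{0}$, where $\partial\mathbf{d}$ is the diagonal of $\partial\mathbf{D}$. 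Since $\mathbf{I}+\mathbf{X}$ is invertible (as used in \thref{th2}), $\partial\mathbf{d}=\mathbf{0}$, hence $\partial\mathbf{D}\mathbf{X}+\mathbf{X}\partial\mathbf{D}=\mathbf{0}$ and $\mathcal{P}^+(\mathbf{X}+\partial\mathbf{X})\approx\mathbf{X}+\partial\mathbf{X}$. Applying this with $\partial\mathbf{X}=\tau\xi_{\mathbf{X}}$ and a first-order expansion of the element-wise exponential, $\mathcal{P}^+(\mathbf{X}\odot\text{exp}(\tau\xi_{\mathbf{X}}\oslash\mathbf{X}))=\mathcal{P}^+(\mathbf{X}+\tau\xi_{\mathbf{X}})=\mathbf{X}+\tau\xi_{\mathbf{X}}$ to first order, so $\frac{d}{d\tau}R_{\mathbf{X}}(\tau\xi_{\mathbf{X}})\big|_{\tau=0}=\xi_{\mathbf{X}}$, which is local rigidity.

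The one genuinely new point to get right — and the place I expect the only real friction — is the linear-algebra step for the DAD scaling: in the symmetric case there is a single diagonal factor, so the perturbation system collapses from the $2n\times 2n$ block system $\begin{pmatrix}\mathbf{I}&\mathbf{X}\\\mathbf{X}^T&\mathbf{I}\end{pmatrix}$ of \lref{lem5} to the $n\times n$ system $(\mathbf{I}+\mathbf{X})\partial\mathbf{d}=\mathbf{0}$, and one must be careful that the relevant row/column conditions from post- and pre-multiplying by $\mathbf{1}$ and $\mathbf{1}^T$ really do coincide here (they do, again by symmetry of $\mathbf{X}$ and $\partial\mathbf{X}$). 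One should also confirm the uniqueness/differentiability of the DAD scaling on $\overline{\mathcal{S}}_n$ that legitimises treating $\mathbf{D}$ as a smooth function of its argument; this is exactly what \cref{cor1}'s proof imports from \appref{app3}, so I would simply point there. I would close the proof by stating that centering, local rigidity and smoothness together establish that $R_{\mathbf{X}}$ is a retraction on $\mathds{S}\mathds{P}_{n}$.
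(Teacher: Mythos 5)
Your proposal is correct and follows essentially the same route as the paper, which likewise reduces the corollary to the proof of \lref{lem5} after observing that $\mathbf{X}\odot\text{exp}(\xi_{\mathbf{X}}\oslash\mathbf{X})$ lands in $\overline{\mathcal{S}}_n$. The one detail you work out explicitly --- that the DAD perturbation system collapses to $(\mathbf{I}+\mathbf{X})\partial\mathbf{d}=\mathbf{0}$, which forces $\partial\mathbf{d}=\mathbf{0}$ by invertibility of $\mathbf{I}+\mathbf{X}$ --- is left implicit in the paper and is a correct (indeed cleaner, since the null space is now trivial) analogue of the $2n\times 2n$ argument in \lref{lem5}.
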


\begin{proof}
The proof of this corollary is straightforward. Indeed, after showing that the range space of $R_{\mathbf{X}}(\xi_{\mathbf{X}})$ is the set symmetric element-wise matrices $\overline{\mathcal{S}}_n$, the proof concerning the centering and local rigidity of the retraction are similar to the one in \lref{lem5}.
\end{proof}

\subsection{Connection and Riemannian Hessian Computation}

As discussed earlier, the Levi-Civita connection solely depends on the Riemannian metric. Therefore, the symmetric stochastic multinomial manifold shares the same retraction on the embedding space\footnote{Even though the embedding spaces are not the same for both manifolds, one can easily show that the expression of the connection is invariant.} as the doubly stochastic multinomial manifold. The Riemannian Hessian can be obtained by differentiating the Riemnanian gradient using the projection of the Levi-Civita connection onto the manifold as shown in the below corollary:
\begin{corollary}
The Riemannian Hessian $\text{hess }f(\mathbf{X})[\xi_{\mathbf{X}}]$ can be obtained from the Euclidean gradient $\text{Grad }f(\mathbf{X})$ and the Euclidean Hessian $\text{Hess }f(\mathbf{X})[\xi_{\mathbf{X}}]$ using the identity:
\begin{align}
&\text{hess }f(\mathbf{X})[\xi_{\mathbf{X}}] = \Pi_\mathbf{X}\left( \dot{\delta} - \cfrac{1}{2}(\delta \odot \xi_{\mathbf{X}}) \oslash \mathbf{X}\right) \nonumber \\
&\alpha = (\mathbf{I} + \mathbf{X})^{-1}\gamma\mathbf{1} \nonumber \\
&\delta = \gamma - (\alpha \mathbf{1}^T + \mathbf{1} \alpha^T ) \odot \mathbf{X} \nonumber \\
&\gamma=\text{Grad }f(\mathbf{X}) \odot \mathbf{X} \nonumber \\
&\dot{\alpha} = \left((\mathbf{I} + \mathbf{X})^{-1}\dot{\gamma} -(\mathbf{I} + \mathbf{X})^{-1}\xi_{\mathbf{X}}(\mathbf{I} + \mathbf{X})^{-1}\gamma\right)\mathbf{1} \nonumber \\
&\dot{\delta} = \dot{\gamma} -(\dot{\alpha} \mathbf{1}^T + \mathbf{1} \dot{\alpha}^T ) \odot \mathbf{X} - (\alpha \mathbf{1}^T + \mathbf{1} \alpha^T ) \odot \xi_{\mathbf{X}} \nonumber \\
&\dot{\gamma} = \text{Hess }f(\mathbf{X})[\xi_{\mathbf{X}}]\odot \mathbf{X} + \text{Grad }f(\mathbf{X}) \odot \xi_{\mathbf{X}}  
\end{align}
\label{cor3}
\end{corollary}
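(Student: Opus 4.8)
The plan is to mirror the derivation of \thref{th6}, specializing every ingredient to the smaller ambient space $\mathcal{S}_n$ and to the simpler projection of \thref{th2}. By definition $\text{hess }f(\mathbf{X})[\xi_{\mathbf{X}}] = \nabla_{\xi_{\mathbf{X}}}\text{grad }f(\mathbf{X})$, and by the submanifold formula of \cite{AbsMahSep2008} the Levi-Civita connection of $\mathds{S}\mathds{P}_{n}$ is the $\Pi_\mathbf{X}$-projection of the ambient connection $\overline{\nabla}$. First I would argue that the expression $\overline{\nabla}_{\eta_{\mathbf{X}}}\xi_{\mathbf{X}} = \text{D}(\xi_{\mathbf{X}})[\eta_{\mathbf{X}}] - \tfrac{1}{2}(\eta_{\mathbf{X}}\odot\xi_{\mathbf{X}})\oslash\mathbf{X}$, derived for $\mathds{R}^{n\times n}$, remains valid on $\mathcal{S}_n$: since $\mathbf{X}$, $\eta_{\mathbf{X}}$ and $\xi_{\mathbf{X}}$ are symmetric, the right-hand side is symmetric, hence already lies in $\mathcal{S}_n$, so no extra ambient projection is required and the Koszul computation for the ambient connection goes through verbatim (this is the content of the footnote).

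Second, I would substitute $\xi_{\mathbf{X}}=\text{grad }f(\mathbf{X})$ into this formula, which gives $\text{hess }f(\mathbf{X})[\xi_{\mathbf{X}}] = \Pi_\mathbf{X}\!\left(\text{D}(\text{grad }f(\mathbf{X}))[\xi_{\mathbf{X}}] - \tfrac{1}{2}(\xi_{\mathbf{X}}\odot\text{grad }f(\mathbf{X}))\oslash\mathbf{X}\right)$, and then expand $\text{grad }f(\mathbf{X})=\delta$ using the closed form of the gradient computation, namely $\gamma=\text{Grad }f(\mathbf{X})\odot\mathbf{X}$, $\alpha=(\mathbf{I}+\mathbf{X})^{-1}\gamma\mathbf{1}$, $\delta=\gamma-(\alpha\mathbf{1}^T+\mathbf{1}\alpha^T)\odot\mathbf{X}$. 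The remaining work is to compute $\dot\delta:=\text{D}(\delta)[\xi_{\mathbf{X}}]$ by the product and chain rules. The term $\dot\gamma$ follows from the definition of the Euclidean Hessian, $\dot\gamma=\text{Hess }f(\mathbf{X})[\xi_{\mathbf{X}}]\odot\mathbf{X}+\text{Grad }f(\mathbf{X})\odot\xi_{\mathbf{X}}$; the term $\dot\alpha$ uses $\text{D}((\mathbf{I}+\mathbf{X})^{-1})[\xi_{\mathbf{X}}]=-(\mathbf{I}+\mathbf{X})^{-1}\xi_{\mathbf{X}}(\mathbf{I}+\mathbf{X})^{-1}$, giving $\dot\alpha=\big((\mathbf{I}+\mathbf{X})^{-1}\dot\gamma-(\mathbf{I}+\mathbf{X})^{-1}\xi_{\mathbf{X}}(\mathbf{I}+\mathbf{X})^{-1}\gamma\big)\mathbf{1}$; and differentiating the two Hadamard products in $\delta$ yields $\dot\delta=\dot\gamma-(\dot\alpha\mathbf{1}^T+\mathbf{1}\dot\alpha^T)\odot\mathbf{X}-(\alpha\mathbf{1}^T+\mathbf{1}\alpha^T)\odot\xi_{\mathbf{X}}$. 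Collecting these and applying the projection $\Pi_\mathbf{X}$ of \thref{th2} produces exactly the stated identity; the general bookkeeping behind this step is the one recorded in \appref{app4}.

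The main obstacle I anticipate is a verification task rather than a conceptual one: one must check that every intermediate quantity stays in the correct space so that the ambient connection formula and the symmetric projection $\Pi_\mathbf{X}$ are legitimately applicable — in particular that $\dot\delta-\tfrac12(\delta\odot\xi_{\mathbf{X}})\oslash\mathbf{X}\in\mathcal{S}_n$, which holds because $\delta$, $\xi_{\mathbf{X}}$ and $\dot\alpha\mathbf{1}^T+\mathbf{1}\dot\alpha^T$ are symmetric. A secondary point is that $\text{D}(\text{grad }f(\mathbf{X}))[\xi_{\mathbf{X}}]$ is the directional derivative of the gradient \emph{vector field}, not of a fixed tangent vector, so the chain rule must be applied to $\alpha$, $\gamma$ and $\delta$ viewed as $\mathbf{X}$-dependent maps; this is precisely what generates the additional $\odot\,\xi_{\mathbf{X}}$ terms in $\dot\gamma$ and $\dot\delta$. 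With these observations in place the result follows immediately.
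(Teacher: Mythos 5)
Your proposal is correct and follows essentially the same route as the paper's own proof in \appref{app4}: write $\text{hess }f(\mathbf{X})[\xi_{\mathbf{X}}]=\Pi_\mathbf{X}\bigl(\text{D}(\text{grad }f(\mathbf{X}))[\xi_{\mathbf{X}}]-\tfrac12(\text{grad }f(\mathbf{X})\odot\xi_{\mathbf{X}})\oslash\mathbf{X}\bigr)$ via the projected ambient Levi-Civita connection, then differentiate $\gamma$, $\alpha$, $\delta$ with $\text{D}((\mathbf{I}+\mathbf{X})^{-1})[\xi_{\mathbf{X}}]=-(\mathbf{I}+\mathbf{X})^{-1}\xi_{\mathbf{X}}(\mathbf{I}+\mathbf{X})^{-1}$ (which the paper obtains through the Kailath/Sherman--Morrison--Woodbury variant and a limit, a cosmetic difference only). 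Just phrase the connection substitution as taking the direction $\eta=\xi_{\mathbf{X}}$ and the differentiated field equal to $\text{grad }f(\mathbf{X})$, rather than ``substituting $\xi_{\mathbf{X}}=\text{grad }f(\mathbf{X})$''; the formula you then write is the right one.
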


\section{Extension to the Definite Symmetric Multinomial Manifold} \label{sec:ext}

The definite symmetric stochastic multinomial manifold is defined as the subset of the symmetric stochastic multinomial manifold wherein the matrix of interest is positive definite. Similar to the condition $\mathbf{X}_{ij} > 0$, the strict condition $\mathbf{X} \succ 0$, i.e., full-rank matrix, ensures that the manifold has a differentiable structure. 

The positive-definiteness constraint is a difficult one to retract. In order to produce highly efficient algorithms, one usually needs a re-parameterization of the manifold and to regard the new structure as a quotient manifold, e.g., a Grassmann manifold. However, this falls outside the scope of this paper and is left for future investigation. This part extends the previous study and regards the manifold as an embedded manifold of $\mathcal{S}_n$ for which two retractions are proposed.  

\subsection{Manifold Geometry}

The manifold geometry of the definite symmetric stochastic multinomial manifold is similar to the one of the previous manifold. Indeed, the strictly positive constraint being an inequality one, the dimension of the manifold is unchanged. Therefore, let $\mathbf{X} \in \mathds{S}\mathds{P}^+_{n}$ be a point on the manifold, the tangent space $\mathcal{T}_{\mathbf{X}}\mathds{S}\mathds{P}^+_{n}$ of the definite symmetric stochastic multinomial manifold is similar to tangent space of the symmetric stochastic multinomial manifold, i.e.,:
\begin{align}
\mathcal{T}_{\mathbf{X}}\mathds{S}\mathds{P}^+_{n} = \left\{\mathbf{Z} \in \mathcal{S}_n \big| \mathbf{Z}\mathbf{1}=\mathbf{0}\right\}.
\end{align}

As a result, the expression of the Riemannian gradient and Hessian are the same as the one presented in the previous section. Hence, one only needs to design a retraction to derive optimization algorithms on the manifold.

\subsection{Retraction on the Cone of Positive Definite Matrices}

As shown in the previous subsection, the geometry of the definite symmetric multinomial manifold is similar to the symmetric multinomial manifold one. Therefore, one can extend the canonical retraction proposed in the previous section. However, even tough the retraction looks similar to the one proposed in the previous section, its implementation is more problematic as it includes a condition on the eigenvalues. Hence, the section proposes another retraction that exploits the definite structure of the manifold and uses the matrix exponential to retract the tangent vectors as shown in the following theorem.
\begin{theorem}
Define the map $R_{\mathbf{X}}$ from $ \mathcal{T}_{\mathbf{X}}\mathds{S}\mathds{P}^+_{n} $ to $\mathds{S}\mathds{P}^+_{n} $ by:
\begin{align}
R_{\mathbf{X}}(\xi_{\mathbf{X}}) = \mathbf{X} + \dfrac{1}{\omega_{\mathbf{X}}} \mathbf{I} - \dfrac{1}{\omega_{\mathbf{X}}} \mathbf{e}^{-\omega_{\mathbf{X}}\xi_{\mathbf{X}}},
\end{align}
wherein $\mathbf{e}^{\mathbf{Y}}$ the matrix exponential\footnote{Not to be confused with the exponential map $\text{Exp}$ of the Riemannian geometry or with the element-wise exponential $\text{exp}$.} of matrix $\mathbf{Y}$ and $\omega_{\mathbf{X}}$ is a scalar that ensures:
\begin{align}
R_{\mathbf{X}}(\xi_{\mathbf{X}})_{ij} &> 0 ,\  1 \leq i,j \leq n \label{eq:13}\\
R_{\mathbf{X}}(\xi_{\mathbf{X}}) &\succ \mathbf{0}, \label{eq:14}
\end{align}
for all $\xi_{\mathbf{X}} \in \mathcal{T}_{\mathbf{X}}\mathds{S}\mathds{P}^+_{n}$ in the neighborhood of $\mathbf{0}_\mathbf{X} $, i.e., $||\xi_{\mathbf{X}}||_F \leq \epsilon$ for some $\epsilon>0$. Then, there exists a sequence of scalars $\{\omega_{\mathbf{X}}\}_{\mathbf{X} \in \mathds{S}\mathds{P}^+_{n}}$ such that the mapping $R: \mathcal{T}\mathds{S}\mathds{P}^+_{n} \longrightarrow \mathds{S}\mathds{P}^+_{n} $, whose restriction $R_{\mathbf{X}}$ to $ \mathcal{T}_{\mathbf{X}}\mathds{S}\mathds{P}^+_{n} $, is a retraction on the definite symmetric multinomial manifold.
\label{th5}
\end{theorem}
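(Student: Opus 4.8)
\emph{Proof strategy.} To establish that $R$ is a retraction on $\mathds{S}\mathds{P}^+_{n}$ I plan to follow the template used for \thref{th4} and \lref{lem5}: check that $R_{\mathbf{X}}(\xi_{\mathbf{X}})$ indeed lies in $\mathds{S}\mathds{P}^+_{n}$, that $R$ is smooth, that it satisfies the centering property $R_{\mathbf{X}}(\mathbf{0})=\mathbf{X}$, and that it satisfies the local rigidity property; additionally I must exhibit the scalars $\omega_{\mathbf{X}}$ that guarantee the strict constraints \eref{eq:13} and \eref{eq:14}. The general machinery for retractions onto embedded manifolds, collected in \appref{app3}, will handle the smoothness part once an explicit formula is in hand.

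First I would dispose of the algebraic membership conditions that hold for \emph{any} admissible $\omega_{\mathbf{X}}$. Symmetry of $R_{\mathbf{X}}(\xi_{\mathbf{X}})$ is immediate because $\mathbf{X}$, $\mathbf{I}$ and $\mathbf{e}^{-\omega_{\mathbf{X}}\xi_{\mathbf{X}}}$ are all symmetric, the last one since $\xi_{\mathbf{X}}=\xi_{\mathbf{X}}^T$. For the row (hence, by symmetry, column) sums I would use that $\xi_{\mathbf{X}}\in\mathcal{T}_{\mathbf{X}}\mathds{S}\mathds{P}^+_{n}$ implies $\xi_{\mathbf{X}}\mathbf{1}=\mathbf{0}$, so $\xi_{\mathbf{X}}^{k}\mathbf{1}=\mathbf{0}$ for every $k\geq 1$ and the power series defining the matrix exponential collapses to $\mathbf{e}^{-\omega_{\mathbf{X}}\xi_{\mathbf{X}}}\mathbf{1}=\mathbf{1}$; together with $\mathbf{X}\mathbf{1}=\mathbf{1}$ this gives $R_{\mathbf{X}}(\xi_{\mathbf{X}})\mathbf{1}=\mathbf{1}+\omega_{\mathbf{X}}^{-1}\mathbf{1}-\omega_{\mathbf{X}}^{-1}\mathbf{1}=\mathbf{1}$.

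The step I expect to be the main obstacle is securing the \emph{strict} inequalities \eref{eq:13} and \eref{eq:14} simultaneously, since along tangent directions with negative eigenvalues $\mathbf{e}^{-\omega_{\mathbf{X}}\xi_{\mathbf{X}}}$ grows and can push an eigenvalue of $R_{\mathbf{X}}(\xi_{\mathbf{X}})$ below zero. My plan is to observe that $R_{\mathbf{X}}(\mathbf{0})=\mathbf{X}$ satisfies both conditions and that entrywise positivity and positive definiteness are open conditions, so that for any fixed $\omega_{\mathbf{X}}>0$ continuity of $\xi_{\mathbf{X}}\mapsto R_{\mathbf{X}}(\xi_{\mathbf{X}})$ yields a radius $\epsilon>0$ on which both persist; to make the definiteness bound quantitative I would invoke Weyl's inequality, giving $\lambda_{\min}(R_{\mathbf{X}}(\xi_{\mathbf{X}}))\geq\lambda_{\min}(\mathbf{X})+\omega_{\mathbf{X}}^{-1}(1-\mathbf{e}^{\omega_{\mathbf{X}}||\xi_{\mathbf{X}}||_F})$, whose right-hand side is positive once $||\xi_{\mathbf{X}}||_F$ is small relative to $\lambda_{\min}(\mathbf{X})$, with an analogous elementary bound for the entrywise positivity. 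This delivers the family $\{\omega_{\mathbf{X}}\}$ (which may in fact be taken constant) and confines the retraction to the stated neighborhood of $\mathbf{0}_{\mathbf{X}}$.

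Finally I would clear the remaining axioms. Smoothness follows from the analyticity of the matrix exponential together with \appref{app3}; centering is the direct evaluation $R_{\mathbf{X}}(\mathbf{0})=\mathbf{X}+\omega_{\mathbf{X}}^{-1}(\mathbf{I}-\mathbf{e}^{\mathbf{0}})=\mathbf{X}$; and for local rigidity I would differentiate $\gamma_{\xi_{\mathbf{X}}}(\tau)=\mathbf{X}+\omega_{\mathbf{X}}^{-1}\mathbf{I}-\omega_{\mathbf{X}}^{-1}\mathbf{e}^{-\omega_{\mathbf{X}}\tau\xi_{\mathbf{X}}}$, using $\frac{d}{d\tau}\mathbf{e}^{-\omega_{\mathbf{X}}\tau\xi_{\mathbf{X}}}=-\omega_{\mathbf{X}}\xi_{\mathbf{X}}\mathbf{e}^{-\omega_{\mathbf{X}}\tau\xi_{\mathbf{X}}}$, to obtain $\gamma'_{\xi_{\mathbf{X}}}(0)=\xi_{\mathbf{X}}$, which lies in $\mathcal{T}_{\mathbf{X}}\mathds{S}\mathds{P}^+_{n}$ as required. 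Assembling these pieces proves that $R$ is a retraction on the definite symmetric multinomial manifold.
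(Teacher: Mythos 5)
Your proposal is correct and verifies the same checklist as the paper (range contained in $\mathds{S}\mathds{P}^+_{n}$, centering, local rigidity, existence of admissible $\omega_{\mathbf{X}}$), but the decisive step is handled by a genuinely different argument. For membership and rigidity the differences are cosmetic: you get $\mathbf{e}^{-\omega_{\mathbf{X}}\xi_{\mathbf{X}}}\mathbf{1}=\mathbf{1}$ from the power series and $\xi_{\mathbf{X}}^k\mathbf{1}=\mathbf{0}$, and $\gamma'_{\xi_{\mathbf{X}}}(0)=\xi_{\mathbf{X}}$ from $\frac{d}{d\tau}\mathbf{e}^{-\omega_{\mathbf{X}}\tau\xi_{\mathbf{X}}}=-\omega_{\mathbf{X}}\xi_{\mathbf{X}}\mathbf{e}^{-\omega_{\mathbf{X}}\tau\xi_{\mathbf{X}}}$, where the paper does both via the eigendecomposition of $\xi_{\mathbf{X}}$. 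The real divergence is the order of quantifiers in securing \eref{eq:13}--\eref{eq:14}: you fix $\omega_{\mathbf{X}}$ (possibly a single constant for all $\mathbf{X}$) and shrink the radius $\epsilon$ using openness and continuity, made quantitative by Weyl's inequality, whereas the paper fixes the radius $\epsilon$ and manufactures $\omega_{\mathbf{X}}$ by letting a sequence $\omega^m_{\mathbf{X}}\downarrow 0$ and arguing uniform convergence of $\mathbf{X}_m(\xi_{\mathbf{X}})=\mathbf{X}+\frac{1}{\omega^m_{\mathbf{X}}}(\mathbf{I}-\mathbf{e}^{-\omega^m_{\mathbf{X}}\xi_{\mathbf{X}}})$ together with openness of the set of entry-wise positive, positive-definite symmetric matrices. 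Your direction is the more elementary and arguably more robust one: a retraction only needs to be defined on a neighborhood of $\mathbf{0}_{\mathbf{X}}$, so letting that neighborhood depend on $\mathbf{X}$ and $\omega_{\mathbf{X}}$ is legitimate; your bound $\lambda_{\min}(R_{\mathbf{X}}(\xi_{\mathbf{X}}))\geq\lambda_{\min}(\mathbf{X})+\omega_{\mathbf{X}}^{-1}(1-e^{\omega_{\mathbf{X}}||\xi_{\mathbf{X}}||_F})$ is valid, since the smallest eigenvalue of $\omega_{\mathbf{X}}^{-1}(\mathbf{I}-\mathbf{e}^{-\omega_{\mathbf{X}}\xi_{\mathbf{X}}})$ is attained at the most negative eigenvalue of $\xi_{\mathbf{X}}$, which is at least $-||\xi_{\mathbf{X}}||_F$; and taking $\omega$ constant sidesteps the (unaddressed in the paper) question of how $\omega_{\mathbf{X}}$ varies with $\mathbf{X}$. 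What the paper's ordering buys is a radius $\epsilon$ prescribed before $\omega_{\mathbf{X}}$, but at the cost of a delicate limiting argument---note that as $\omega\to 0$ the map tends to the canonical retraction $\mathbf{X}+\xi_{\mathbf{X}}$, so small $\epsilon$ relative to $\lambda_{\min}(\mathbf{X})$ and $\min_{ij}\mathbf{X}_{ij}$ is implicitly needed there as well, and the paper's bound only controls the nonnegative eigenvalues of $\xi_{\mathbf{X}}$. Two small touch-ups to your write-up: spell out the deferred entrywise estimate, e.g.\ $|(\mathbf{I}-\mathbf{e}^{-\omega_{\mathbf{X}}\xi_{\mathbf{X}}})_{ij}|\leq\sqrt{n}\,(e^{\omega_{\mathbf{X}}||\xi_{\mathbf{X}}||_F}-1)$, which is dominated by $\min_{ij}\mathbf{X}_{ij}$ for small $||\xi_{\mathbf{X}}||_F$; and smoothness follows directly from the analyticity of the matrix exponential---\appref{app3} is not needed here, since this retraction, unlike those of \thref{th4} and \corref{cor1}, is not built from the Sinkhorn/DAD decomposition machinery.
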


\begin{proof}
Unlike the previous retractions that rely on the Euclidean structure of the embedding space, this retraction is obtained by direct computation of the properties of the retraction given in \sref{sec:opt}. The organization of the proof is the following: First, assuming the existence of $\omega_{\mathbf{X}}$, we show that the range of the mapping $R_{\mathbf{X}}$ is included in the definite symmetric multinomial manifold. Afterward, we demonstrate that the operator satisfies the centering and local rigidity conditions. Therefore, the operator represents a retraction. Finally, showing the existence of the scalar $\omega_{\mathbf{X}}$ for an arbitrary $\mathbf{X} \in \mathds{S}\mathds{P}^+_{n}$ concludes the proof.

Recall that the matrix exponential of a symmetric real matrix $\xi_{\mathbf{X}}$ with eigenvalue decomposition  $\xi_{\mathbf{X}}=\mathbf{U}\Lambda\mathbf{U}^T$ is given by $\mathbf{e}^{\xi_{\mathbf{X}}}=\mathbf{U}\exp(\Lambda)\mathbf{U}^T$, where $\exp(\Lambda)$ is the usual element-wise exponential of the element on the diagonal and zeros elsewhere. From the derivation of the tangent space of the definite symmetric multinomial manifold $\mathcal{T}_{\mathbf{X}}\mathds{S}\mathds{P}^+_{n}$, we have $\xi_{\mathbf{X}}\mathbf{1}=\mathbf{0}$. Therefore, $\xi_{\mathbf{X}}$ has an eigenvalue of $0$ corresponding to the eigenvector $\mathbf{1}$. As stated by the definition of the matrix exponential, the eigenvalue are exponentiated and the eigenvectors are unchanged. Therefore, $\mathbf{e}^{\xi_{\mathbf{X}}}$ (and thus $\mathbf{e}^{-\omega_{\mathbf{X}}\xi_{\mathbf{X}}}$) has an eigenvalue of $\exp(0)=1$ corresponding to the eigenvector $\mathbf{1}$, i.e., $\mathbf{e}^{-\omega_{\mathbf{X}}\xi_{\mathbf{X}}}\mathbf{1}=\mathbf{1}$. First note from the first condition on $\omega_{\mathbf{X}}$, all entries are positive. Now, computing the rows summation gives:
\begin{align}
R_{\mathbf{X}}(\xi_{\mathbf{X}})\mathbf{1} &= \mathbf{X}\mathbf{1} + \dfrac{1}{\omega_{\mathbf{X}}} \mathbf{I}\mathbf{1} - \dfrac{1}{\omega_{\mathbf{X}}} \mathbf{e}^{-\omega_{\mathbf{X}}\xi_{\mathbf{X}}}\mathbf{1} \nonumber \\
&= \mathbf{1} + \dfrac{1}{\omega_{\mathbf{X}}}\mathbf{1} - \dfrac{1}{\omega_{\mathbf{X}}} \mathbf{1} \nonumber \\
&= \mathbf{1}.
\end{align}
Hence $R_{\mathbf{X}}(\xi_{\mathbf{X}})$ is stochastic. Besides, all matrices are symmetric which concludes that the matrix is doubly stochastic. Finally, the second condition on $\omega_{\mathbf{X}}$ ensure the definiteness of the matrix which concludes that $R_{\mathbf{X}}(\xi_{\mathbf{X}}) \in \mathds{S}\mathds{P}^+_{n}$.

The centering property can be easily checked by evaluating the retraction $R_{\mathbf{X}}$ at the zero-element $\mathbf{0}_\mathbf{X} $ of $ \mathcal{T}_{\mathbf{X}}\mathds{S}\mathds{P}^+_{n} $. Indeed, we obtain:
\begin{align}
R_{\mathbf{X}}(\mathbf{0}_\mathbf{X}) &= \mathbf{X} + \dfrac{1}{\omega_{\mathbf{X}}} \mathbf{I} - \dfrac{1}{\omega_{\mathbf{X}}} \mathbf{e}^{-\omega_{\mathbf{X}}\mathbf{0}_\mathbf{X}} \nonumber \\
&= \mathbf{X} + \dfrac{1}{\omega_{\mathbf{X}}} \mathbf{I} - \dfrac{1}{\omega_{\mathbf{X}}} \mathbf{I} = \mathbf{X}.
\end{align}
The speed of the rigidity curve $\gamma_{\xi_{\mathbf{X}}}(\tau) = R_{\mathbf{X}}(\tau \xi_{\mathbf{X}})$ at the origin is given by:
\begin{align}
\cfrac{d\gamma_{\xi_x}(\tau)}{d\tau}\Big|_{\tau=0} &= - \dfrac{1}{\omega_{\mathbf{X}}} \cfrac{d\mathbf{e}^{-\omega_{\mathbf{X}}\tau\xi_{\mathbf{X}}}}{d\tau} \Big|_{\tau=0} \nonumber \\
&= - \dfrac{1}{\omega_{\mathbf{X}}} \mathbf{U}\cfrac{d\exp(-\omega_{\mathbf{X}}\tau\Lambda)}{d\tau}\mathbf{U}^T\Big|_{\tau=0} \nonumber \\
&= \mathbf{U}\Lambda \exp(-\omega_{\mathbf{X}}\tau\Lambda)\mathbf{U}^T\Big|_{\tau=0} \nonumber \\
&= \mathbf{U}\Lambda \mathbf{U}\mathbf{U}^T\exp(-\omega_{\mathbf{X}}\tau\Lambda)\mathbf{U}^T\Big|_{\tau=0} \nonumber \\
&= \xi_{\mathbf{X}}\mathbf{e}^{-\omega_{\mathbf{X}}\tau\xi_{\mathbf{X}}}\Big|_{\tau=0} = \xi_{\mathbf{X}} 
\end{align}
Therefore, we conclude that $R_{\mathbf{X}}(\xi_{\mathbf{X}})$ is a well-defined retraction.

Finally, the existence of the weight $\omega_{\mathbf{X}}$ is ensured by the fact that $\overline{\mathcal{S}}_n^+$ is an open subset of $\mathcal{S}_n$. Consider a positive sequence $\{\omega^m_{\mathbf{X}}\}_{m =1}^{\infty}$ decreasing to $0$ and construct the function series $\{\mathbf{X}_m\}_{m =1}^{\infty}$ as follows:
\begin{align}
\mathbf{X}_m(\xi_{\mathbf{X}}) = \mathbf{X} + \dfrac{1}{\omega^m_{\mathbf{X}}} \mathbf{I} - \dfrac{1}{\omega^m_{\mathbf{X}}} \mathbf{e}^{-\omega^m_{\mathbf{X}}\xi_{\mathbf{X}}}.
\end{align}
We aim to show that $\{\mathbf{X}_m\}_{m =1}^{\infty}$ uniformly converges to the constant $\mathbf{X} \in \overline{\mathcal{S}}_n^+$. Since $\overline{\mathcal{S}}_n^+$ is an open set, then there exists an index $m_0$ above which, i.e., $\forall \ m \geq m_0$ the sequence $\mathbf{X}_m(\xi_{\mathbf{X}}) \in \overline{\mathcal{S}}_n^+, \forall \ \xi_{\mathbf{X}} \in \mathcal{T}_{\mathbf{X}}\mathds{S}\mathds{P}^+_{n}$ with $||\xi_{\mathbf{X}}||_F \leq \epsilon$. Hence $\omega_{\mathbf{X}}$ can be chosen to be any $\omega^m_{\mathbf{X}}$ for $ m \geq m_0$. The uniform convergence of the series is given in the following lemma
\begin{lemma}
The uniform convergence of the function series $\{\mathbf{X}_m\}_{m =1}^{\infty}$ is satisfied as $\forall \ \epsilon^\prime > 0$, $\exists \  M_0$ such that $\forall \ m \geq M_0$ the following holds:
\begin{align}
||\mathbf{X}_m(\xi_{\mathbf{X}})-\mathbf{X}||_F < \epsilon^\prime, \ \forall \ \xi_{\mathbf{X}} \in \mathcal{T}_{\mathbf{X}}\mathds{S}\mathds{P}^+_{n} \text{ with } ||\xi_{\mathbf{X}}||_F \leq \epsilon \nonumber
\end{align}
\end{lemma}
\begin{proof}
Note that the condition over all tangent vectors can be replaced by the following condition (up to an abuse of notation with the $\epsilon^\prime$):
\begin{align}
||\mathbf{X}_m(&\xi_{\mathbf{X}})-\mathbf{X}||_F < \epsilon^\prime, \ \forall \ \xi_{\mathbf{X}} \in \mathcal{T}_{\mathbf{X}}\mathds{S}\mathds{P}^+_{n} \text{ with } ||\xi_{\mathbf{X}}||_F \leq \epsilon \nonumber \\
&\Leftrightarrow \sup_{\substack{\xi_{\mathbf{X}} \in \mathcal{T}_{\mathbf{X}}\mathds{S}\mathds{P}^+_{n} \\ ||\xi_{\mathbf{X}}||_F \leq \epsilon}} ||\mathbf{X}_m(\xi_{\mathbf{X}})-\mathbf{X}||_F^2 < \epsilon^\prime \nonumber \\ 
& \qquad \Leftrightarrow \max_{\substack{\xi_{\mathbf{X}} \in \mathcal{T}_{\mathbf{X}}\mathds{S}\mathds{P}^+_{n} \\ ||\xi_{\mathbf{X}}||_F \leq \epsilon}} ||\mathbf{X}_m(\xi_{\mathbf{X}})-\mathbf{X}||_F^2 < \epsilon^\prime,
\end{align}
wherein the last equivalence is obtained from the fact that the search space is closed. The last expression allows us to work with an upper bound of the distance. Indeed, the distance can be bound by:
\begin{align}
||\mathbf{X}_m(\xi_{\mathbf{X}})-\mathbf{X}||_F^2 &= \dfrac{1}{(\omega^m_{\mathbf{X}})^2}|| \mathbf{I} - \mathbf{e}^{-\omega^m_{\mathbf{X}}\xi_{\mathbf{X}}} ||_F^2\nonumber \\
 &= \dfrac{1}{(\omega^m_{\mathbf{X}})^2} \sum_{i=1}^n(1-\exp(-\omega^m_{\mathbf{X}}\lambda_i))^2 \nonumber \\
 &\leq \dfrac{n}{(\omega^m_{\mathbf{X}})^2} (1-\exp(-\omega^m_{\mathbf{X}}\epsilon))^2,
\end{align}
with the last inequality being obtained from $||\xi_{\mathbf{X}}||_F \leq \epsilon \Rightarrow  \lambda_i\leq \epsilon , 1 \leq i \leq n$. Now using the fact that $\{\omega^m_{\mathbf{X}}\}_{m =1}^{\infty}$ is decreasing to $0$, then there exists $M_0$ such that $\forall \ m \geq M_0$, the following is true:
\begin{align}
\dfrac{n}{(\omega^m_{\mathbf{X}})^2} (1-\exp(-\omega^m_{\mathbf{X}}\epsilon))^2  \leq \epsilon \leq \epsilon^\prime
\end{align}
Combining the above results, we find out that $\forall \ \epsilon^\prime > 0$, $\exists \  M_0$ such that $\forall \ m \geq M_0$ the following holds:
\begin{align}
||\mathbf{X}_m(\xi_{\mathbf{X}})-\mathbf{X}||_F < \epsilon^\prime, \ \forall \ \xi_{\mathbf{X}} \in \mathcal{T}_{\mathbf{X}}\mathds{S}\mathds{P}^+_{n} \text{ with } ||\xi_{\mathbf{X}}||_F \leq \epsilon \nonumber
\end{align}
\end{proof}
Finally, as stated earlier, combining the uniform convergence and the fact that $\overline{\mathcal{S}}_n^+$ is an open subset of $\mathcal{S}_n$ allows us to conclude the existence of $\omega_{\mathbf{X}}$ such that both conditions \eref{eq:13} and \eref{eq:14} are satisfied for all tangent vectors $\xi_{\mathbf{X}} \in \mathcal{T}_{\mathbf{X}}\mathds{S}\mathds{P}^+_{n}$ with $||\xi_{\mathbf{X}}||_F \leq \epsilon$.
\end{proof} 

\begin{remark}
The retraction developed in \thref{th5} can be applied to the symmetric multinomial manifold in which the scalar $\omega_{\mathbf{X}}$ is chosen in such fashion so as to satisfy condition \eref{eq:13} independently of condition \eref{eq:14}. However, the retraction is not valid for the doubly stochastic multinomial manifold. Indeed, the tangent vectors of the doubly stochastic multinomial manifold being not necessarily symmetric, nothing can be claimed about the exponential of the vector $\xi_{\mathbf{X}}$.
\end{remark}

While the retraction proposed in \thref{th5} is superior to the canonical one, it still suffers from the scaling of $\xi_{\mathbf{X}}$. In other words, if the optimal solution has vanishing entries or vanishing eigenvalues, the retraction results in tiny updates which compromise the convergence speed of the proposed algorithms. As stated at the beginning of the section, the problem can be solved by re-parameterizing the manifold which falls outside the scope of this paper.

\section{Optimization Algorithms and Complexity} \label{sec:opt2}

\begin{table*}
\centering
\caption{Complexity of the steepest descent and Newton's method algorithms for the proposed manifolds.}
\begin{tabular}{|c||c|c|}
\hline
Manifold & Steepest descent algorithm & Newton's method algorithm \\
\hline \hline
Doubly Stochastic Multinomial $\mathds{D}\mathds{P}_{n}$ & $(16/3)n^3+7n^2+\log(n)\sqrt{n}$ & $32/3n^3+15n^2+\log(n)\sqrt{n}$\\
\hline 
Symmetric Multinomial $\mathds{S}\mathds{P}_{n}$ & $(1/3)n^3 + 2n^2+2n+\log(n)\sqrt{n}$ & $n^3+8n^2+17/2n+\log(n)\sqrt{n}$\\
\hline 
Definite Symmetric Multinomial $\mathds{S}\mathds{P}_{n}^+$ & $n^3+3n^2+3n$ & $4/3n^3+13/2n^2+7n$\\\hline
\end{tabular}
\label{tab:1}
\end{table*}

This section analysis the complexity of the steepest descent, summarized in \algref{alg1}, and the Newton's method, summed up in \algref{alg2}, algorithms on the proposed Riemannian Manifolds. While this section only presents the simplest first and second order algorithms, the simulation section uses the more sophisticated conjugate gradient and trust regions methods as first and second order algorithms to obtain the curves. The complexity analysis of these algorithms is similar to the one presented herein as it is entirely governed by the complexity of computing the Riemannian gradient and Hessian. \tref{tab:1} summarizes the first and second order complexity for each manifold.

\subsection{Gradient Descent Complexity}

The complexity of computing the gradient \eref{eq:258} of the doubly stochastic multinomial manifold can be decomposed into the complexity of computing $\gamma$, $\alpha$, and $\text{grad }f(\mathbf{X})$. The term $\gamma$ is a simple Hamadard product that can be computed in $n^2$ operations. The term $\alpha$ is obtained by solving the system of equations in \eref{eq:257} which takes $(2/3)(2n)^3$ when using an LU factorization. Finally, the expression of $\text{grad }f(\mathbf{X})$ requires a couple of additions and an hadamard product which can be done in $3n^2$ operations. Finally, the complexity of computing the retraction can be decomposed into the complexity of computing the update vector and the complexity of the projection. The updated vector is an hadamard product and division that can be computed in $3n^2$. The complexity of projecting a matrix $\mathbf{A}$ onto the set of doubly stochastic manifold \cite{Kalantari} with accuracy $\epsilon$ is given by:
\begin{align}
\mathcal{O}((1/\epsilon + \log(n))\sqrt{n}V/v),
\end{align}
wherein $V = \max(\mathbf{A})$ and $v=\min(\mathbf{A})$. Therefore, the total complexity of an iteration of the gradient descent algorithm on the doubly stochastic multinomial manifold is $(16/3)n^3+7n^2+\log(n)\sqrt{n}$.

The complexity of the symmetric stochastic manifold can be obtained in a similar manner. Due to the symmetry, term $\gamma$ only requires $n(n+1)/2$ operations. The term $\alpha$ is the solution to an $n\times n$ system of equations which can be solved in $(1/3)n^3$. Similarly, $\text{grad }f(\mathbf{X})$ requires $3/2 n(n+1)$. Therefore, the total complexity can be written as:
\begin{align}
(1/3)n^3 + 2n^2+2n+\log(n)\sqrt{n}.
\end{align}

The retraction on the cone of positive matrices requires $n^3+2n^2$ which gives a total complexity of the algorithm for the positive symmetric multinomial manifold of $n^3+3n^2+3n$.

\subsection{Newton's Method Complexity}

The complexity of computing the Newton's method requires computing the Riemannian gradient and Hessian and solving an $n \times n$ linear system. However, from the expressions of the Riemannian Hessian, one can note that the complexity of computing the Riemannian gradient in included in the one of the Riemannian Hessian.

For the doubly stochastic manifold, the complexity of computing the Riemannian Hessian is controlled by the complexity of the projection and the inversions. The projection onto the tangent space requires solving a $n \times n$ system and a couple of additions and an Hadamard product. The total cost of the operation is $2/3(2n)^3 + 3n^2$. The $\epsilon$ and $\dot{epsilon}$ terms are inversions and matrices products that require $4n^3$. The other terms combined require $9n^2$ operation. The retraction costs $3n^2+\log(n)\sqrt{n}$ and solving for the search direction requires $2/3(2n)^3$ which gives a total complexity of:
\begin{align}
32/3n^3+15n^2+\log(n)\sqrt{n}.
\end{align}

A similar analysis as the one above allows to conclude that the total complexity of a second order method on the symmetric and positive doubly stochastic manifold require, respectively, the following number of iterations:
\begin{align}
n^3+8n^2+17/2n+\log(n)\sqrt{n} \\
4/3n^3+13/2n^2+7n.
\end{align}

\section{Simulation Results} \label{sec:sim}

This section attests the performance of the proposed framework in efficiently solving optimization problems in which the optimization variables is a doubly stochastic matrix. The experiments are carried out using Matlab on an Intel Xeon Processor E5-1650 v4 (15M Cache, 3.60 GHz) computer with 32GB 2.4 GHz DDR4 RAM. The optimization is performed using the Matlab toolbox ``Manopt" \cite{manopt} and the conjugate gradient (denoted by CG) and trust regions (denoted by TR) algorithms.

The section is divided into three subsections. The first subsection tests the performance of the proposed solution against a popular generic solver ``CVX" \cite{cvx} for each of the manifolds. The section further shows the convergence of each manifold in reaching the same solution by mean of regularization. The second subsection solves a convex clustering problem \cite{7541267} and testifies the efficiency of the proposed algorithm against a generic solver. Finally, the last subsection shows that the proposed framework outperforms a specialized algorithm \cite{Yang:2012:CLD:3042573.3042666} in finding the solution of a non-convex clustering problem. 

\subsection{Performance of the Proposed Manifolds}

This section solves the following optimization problem:
\begin{align}
\min_{\mathbf{X} \in \mathcal{M}} ||\mathbf{A}-\mathbf{X}||_F^2,
\label{eq:545}
\end{align}
wherein the manifold $\mathcal{M}$ is the doubly stochastic, symmetric, and definite stochastic multinomial manifold, respectively. For each of the experiment, matrix $\mathbf{A}$ is generated by $\mathbf{A} = \mathbf{M} + \mathbf{N}$ with $\mathbf{M} \in \mathcal{M}$ belonging to the manifold of interest and $\mathbf{N}$ is a zero-mean white Gaussian noise. 

The optimization problem is first solved using the toolbox CVX to obtain the optimal solution $\mathbf{X}^*$ with a predefined precision. The proposed algorithms are iterated until the desired optimal solution $\mathbf{X}^*$ is reached with the same precision and the total execution time is displayed in the corresponding table.

\begin{table}[t]
\begin{small}
\centering
\begin{tabular}{|c||c|c|c|c|c|}
\hline  
$n$&60&70&80&90&100  \\ \hline \hline
CVX $\mathds{D}\mathds{P}_n$&32.04&60.19&97.69&152.32&267.45  \\ \hline
CG $\mathds{D}\mathds{P}_n$&0.80&0.89&1.42&1.69&2.16  \\ \hline
TR $\mathds{D}\mathds{P}_n$&7.69&11.03&18.24&22.60&24.17  \\ \hline
\end{tabular}
\caption{Performance of the Doubly Stochastic Multinomial Manifold Against the Problem Dimension.}
\label{tab:2}
\end{small}
\end{table}

\begin{table}[t]
\begin{small}
\centering
\begin{tabular}{|c||c|c|c|c|c|}
\hline  
$n$&60&70&80&90&100  \\ \hline \hline
CVX $\mathds{S}\mathds{P}_n$&8.66&16.08&26.12&39.95&59.57  \\ \hline
CG $\mathds{S}\mathds{P}_n$&0.18&0.31&0.34&0.44&0.53   \\ \hline
TR $\mathds{S}\mathds{P}_n$&0.61&1.15&1.43&2.07&2.50  \\ \hline
\end{tabular}
\caption{Performance of the Symmetric Stochastic Multinomial Manifold Against the Problem Dimension.}
\label{tab:3}
\end{small}
\end{table}

\begin{table}[t]
\begin{small}
\centering
\begin{tabular}{|c||c|c|c|c|c|}
\hline  
$n$&60&70&80&90&100  \\ \hline \hline
CVX $\mathds{S}\mathds{P}^+_n$&5.43&11.39&22.84&33.77&53.79 \\ \hline
CG $\mathds{S}\mathds{P}^+_n$&0.17&0.18&0.21&0.28&0.34 \\ \hline
TR $\mathds{S}\mathds{P}^+_n$&0.58&0.78&0.93&1.32&1.90  \\ \hline
\end{tabular}
\caption{Performance of the Definite Symmetric Stochastic Multinomial Manifold Against the Problem Dimension.}
\label{tab:4}
\end{small}
\end{table}

\tref{tab:2} illustrates the performance of the proposed method in denoising a doubly stochastic matrix against the problem dimension. The table reveals a significant gain in the simulation time ranging from $39$ to $123$ fold for the first order method and from $4$ to $11$ fold for the second order algorithm as compared with the generic solver. The gain in performance can be explained by the fact that the proposed method uses the geometry of the problem efficiently unlike generic solvers which convert the problem in a standard form and solve it using standard methods. The second order method performs poorly as compared with the first order method due to the fact that the expression of the Riemannian Hessian is complex to compute.

\tref{tab:3} shows the simulation time of the symmetric doubly stochastic multinomial manifold against the problem size. One can note that the gain is more important than the one in \tref{tab:2}. Indeed, as shown in the complexity analysis section, the symmetric manifold enjoys a large dimension reduction as compared with the doubly stochastic one which makes the required ingredients easier to compute. One can note that the computation of the Riemannian Hessian of the symmetric stochastic manifold is more efficient that the doubly stochastic manifold which is reflected in a better performance against the conjugate gradient algorithm.

\tref{tab:4} displayed similar performance in the positive symmetric doubly stochastic multinomial manifold. The proposed definite multinomial manifold efficiently finds the solution. This is due to the fact that the optimal solution does not represent vanishing entries or eigenvalues as pointed out in \sref{sec:ext} which makes the retraction efficient. Such condition being not fulfilled in the upcomming couple of subsections, the performance of the positive definite manifold is omitted and a relaxed version using the symmetric manifold and regularization is presented.

The rest of the subsection confirms the linear and quadratic convergence rate behaviors of the proposed method by plotting the norm of the gradient against the iteration number of each of the manifolds and algorithms. For each of the manifolds, an optimization problem is set up using regularizers is order to reach the optimal solution to optimization problem \eref{eq:545} with $\mathcal{M}=\mathds{S}\mathds{P}_n^+$. The definition of the regularized objective functions is delayed to the next subsection for the more interesting clustering problem. Since the complexity of each step depends on the manifold, optimization problem, and used algorithm, nothing is concluded in this subsection about the efficiency of the algorithm in reaching the same solution.

\begin{figure}[t]
\centering
\includegraphics[width=1\linewidth]{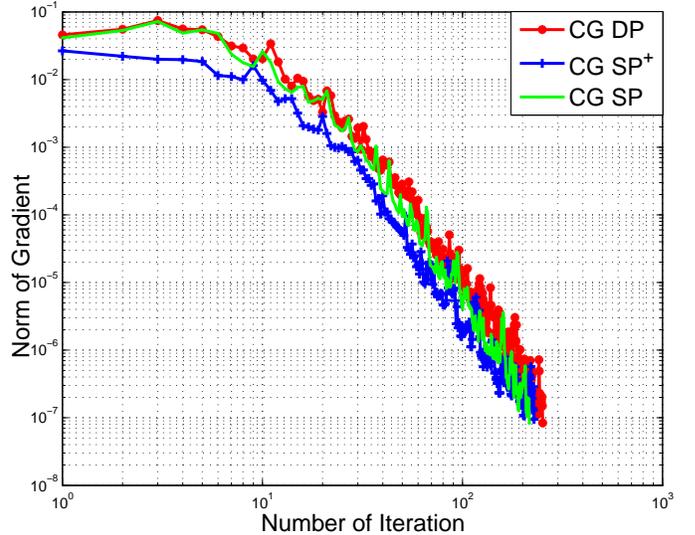}\\
\caption{Convergence rate of the conjugate gradient algorithm on the various proposed manifolds against the iteration number for a high dimension system $n=1000$.} \label{fig:4}
\end{figure}

\begin{figure}[t]
\centering
\includegraphics[width=1\linewidth]{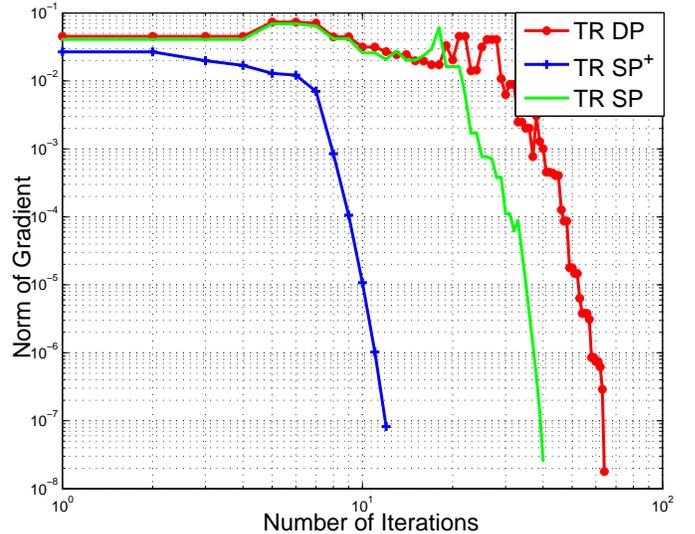}\\
\caption{Convergence rate of the trust region method on the various proposed manifolds against the iteration number for a high dimension system $n=1000$.} \label{fig:5}
\end{figure}

\fref{fig:4} plots the convergence rate of the first order method using the doubly stochastic, symmetric, and positive manifolds. The figures clearly shows that the conjugate gradient algorithm exhibits a linear convergence rate behavior similar to the one of unconstrained optimization. This is mainly due to the fact that the Riemannian optimization approach convert a constrained optimization into an unconstrained one over a constrained set. 

\fref{fig:5} shows that the trust region method has a super linear, i.e., quadratic, convergence behavior with respect to the iteration number. The figure particularly show that the quadratic rate is achieved after a number of iterations which can be explained by the fact that our implementation uses a general retraction instead of the optimal (and complex) Exponential map.

\subsection{Similarity Clustering via Convex Programming}

This section suggests using the proposed framework to solve the convex clustering problem \cite{7541267}. Given an entry-wise non-negative similarity matrix $\mathbf{A}$ between $n$ data points, the goal is to cluster these data points into $r$ clusters. Similar to \cite{7541267}, the matrix is generated by assuming a noisy block stochastic model with $3$ blocks, a connection probability of $0.7$ intra-cluster and $0.2$ inter-clusters, and a noise variance of $0.2$. Under the above conditions, the reference guarantees the recovery of the clusters by solving the following optimization problem:
\begin{align}
\min_{\substack{\mathbf{X} \in \mathds{S}\mathds{P}_n \\ \mathbf{X}\succeq \mathbf{0} }} ||\mathbf{A}-\mathbf{X}||_F^2 + \lambda\text{Tr}(\mathbf{X}),\label{eq:456}
\end{align}
wherein $\lambda$ is the regulizer parameter whose expression is derived in \cite{7541267}. The optimal solution to the above problem is a block matrix (up to a permutation) of rank equal to the number of clusters. Due to such rank deficiency of the optimal solution, the definite positive manifold cannot be used to solve the above problem. Therefore, we reformulate the problem on $\mathds{S}\mathds{P}_n$ by adding the adequate regulizers as below:
\begin{align}
\min_{\mathbf{X} \in \mathds{S}\mathds{P}_n} ||\mathbf{A}-\mathbf{X}||_F^2 + \lambda\text{Tr}(\mathbf{X}) + \rho(||\mathbf{X}||_* - \text{Tr}(\mathbf{X})),\label{eq:125} 
\end{align} 
wherein $\rho$ is the regularization parameter. The expression of such regulizer can be obtained by expressing the Lagrangian of the original problem and deriving the expression of the Lagrangian multiplier. However, this falls outside the scope of this paper. Clearly, the expression $||\mathbf{X}||_* - \text{Tr}(\mathbf{X}) = \sum_{i=1}^n|\lambda_i|-\lambda_i$ is positive and equal to zero if and only if all the eigenvalues are positive which concludes that $\mathbf{X}$ is positive. Similarly, the problem can be reformulated on $\mathds{D}\mathds{P}_n$ as follows:
\begin{align}
\min_{\mathbf{X} \in \mathds{S}\mathds{P}_n} f(\mathbf{X}) + \rho(||\mathbf{X}||_* - \text{Tr}(\mathbf{X})) + \mu(||\mathbf{X}-\mathbf{X}^T||_F^2), \label{eq:126} 
\end{align}
where $f(\mathbf{X})$ is the original objective function in \eref{eq:456} regularized with $\rho$ and $\mu$ to promote positiveness and symmetry.

\begin{figure}[t]
\centering
\includegraphics[width=1\linewidth]{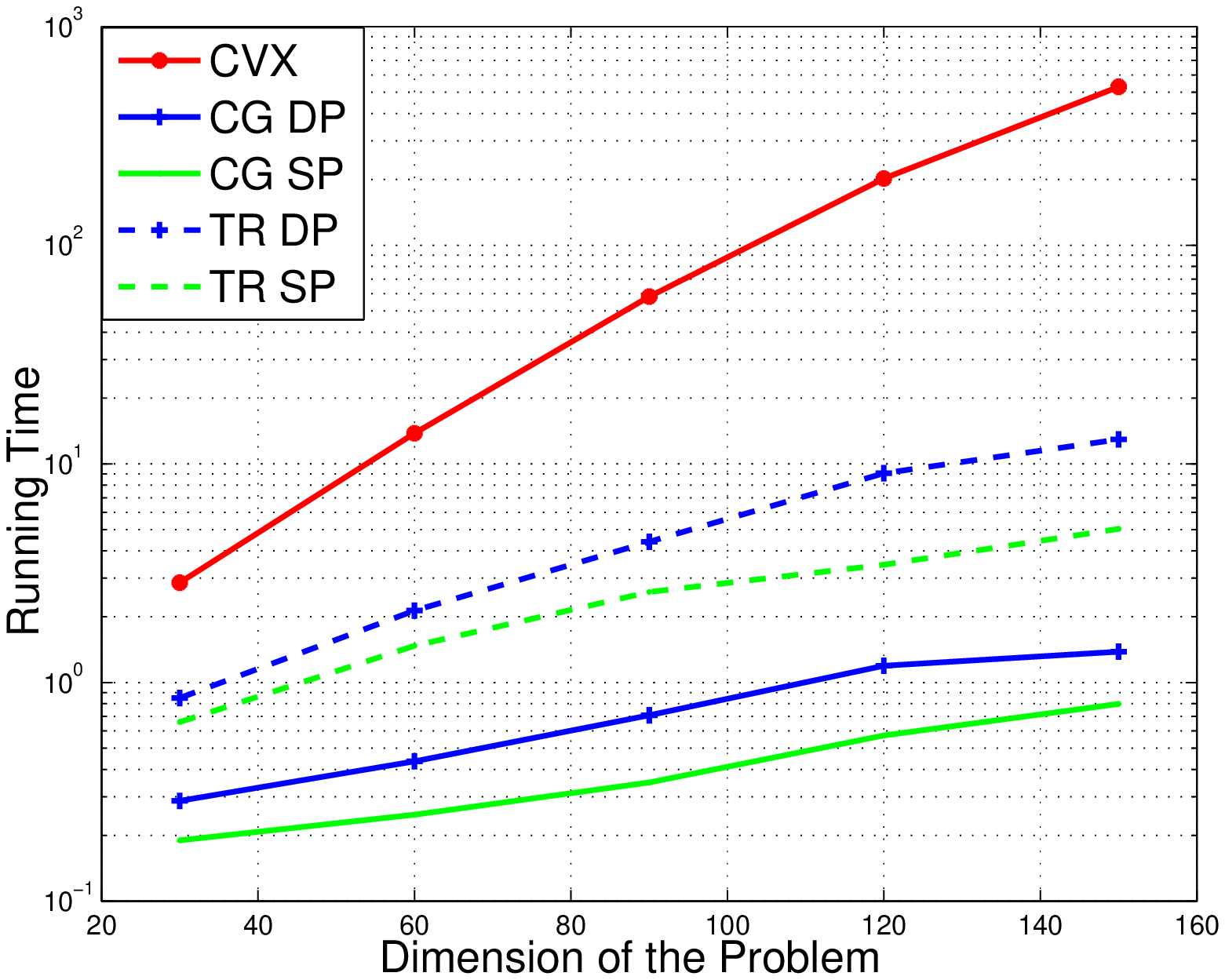}\\
\begin{small}
\begin{tabular}{|c||ccccc|}
\hline 
$n$&30&60&90&120&150  \\ \hline\hline
CVX $\mathds{S}\mathds{P}^+_n$&90.53&383.69&881.49&1590.41&2499.86  \\ \hline
CG $\mathds{S}\mathds{P}_n$&90.53&383.67&881.44&1590.33&2499.74  \\ \hline
TR $\mathds{S}\mathds{P}_n$&90.77&384.02&881.98&1590.90&2500.36  \\ \hline
CG $\mathds{D}\mathds{P}_n$&90.53&383.66&881.46&1590.32&2499.76  \\ \hline
TR $\mathds{D}\mathds{P}_n$&90.87&384.15&881.92&1590.58&2500.06  \\ \hline
\end{tabular}
\end{small}
\caption{Performance of the doubly stochastic and symmetric multinomial manifolds in solving the convex clustering problem in terms of running time and objective cost.} \label{fig:6}
\end{figure}

\fref{fig:6} plots the running time required to solve the convex clustering problem and show the achieved original cost \eref{eq:456} for the different optimization methods. Clearly, the proposed Riemannian optimization algorithms largely outperform the standard approach with gains ranging from $15$ to $665$ fold for the first order methods. The precision of the algorithms is satisfactory as they achieve the CVX precision in almost all experiments. Also note that using the symmetric multinomial manifold produces better results. This can be explained by the fact that not only the objective function \eref{eq:125} is simpler than \eref{eq:126} but also by the fact that the manifold contains less degrees of freedom which makes the projections more efficient. 

\subsection{Clustering by Low-Rank Doubly Stochastic Matrix Decomposition}

This last part of the simulations tests the performance of the proposed method for clustering by low-rank doubly stochastic matrix decomposition in the setting proposed in \cite{Yang:2012:CLD:3042573.3042666}. Given a similarity matrix as in the previous section, the authors in the above reference claim that a suitable objective function to determine the clusters structure is the following non-convex cost:
\begin{align}
\min_{\substack{\mathbf{X} \in \mathds{S}\mathds{P}_n \\ \mathbf{X}\succeq \mathbf{0} }} -\sum_{i,j} \mathbf{A}_{ij} \log\left(\sum_k\cfrac{\mathbf{X}_{ik}\mathbf{X}_{jk}}{\sum_v \mathbf{X}_{vk}}\right) - (\alpha-1)\sum_{ij}\log\left(\mathbf{X}_{ij}\right) \nonumber
\end{align}

The authors propose a specialized algorithm, known as ``Relaxed MM", to efficiently solve the problem above. This section suggests solving the above problem using the positive and the symmetric multinomial manifold (with the proper regularization as shown in the previous subsection). In order to reach the same solution, all algorithms are initialized with the same value. The objective function achieved by the algorithm of \cite{Yang:2012:CLD:3042573.3042666} is taken as a reference, and the other algorithms stop as soon as their cost drops below such value.

\begin{figure}[t]
\centering
\includegraphics[width=1\linewidth]{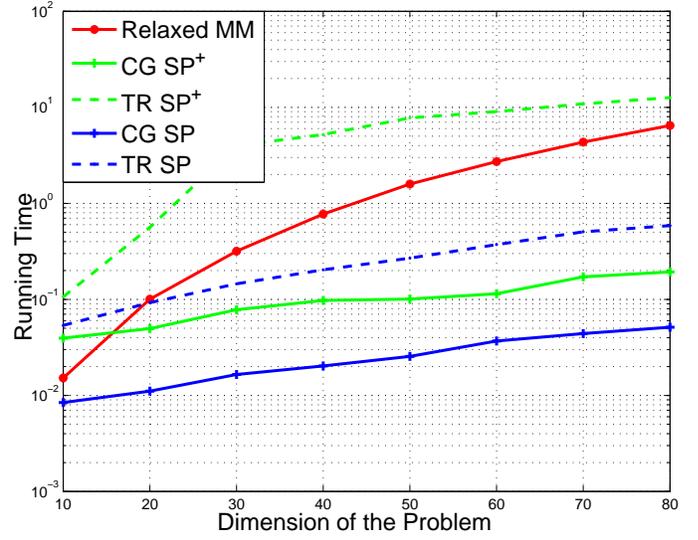}\\
\caption{Performance of the positive and symmetric multinomial manifolds in solving the non-convex clustering problem against the Relaxed MM algorithm.} \label{fig:7}
\end{figure}

\fref{fig:7} illustrates the running time of the different algorithms in order to reach the same solution. The plot reveals that the proposed framework is highly efficient in high dimension with significant gain over the specialized algorithm. The performance of the first order method is noticeably better than the second order one. This can be explained by the complexity of deriving the Riemannian Hessian. In practical implementations, one would use an approximation of the Hessian in a similar manner as the quasi-Newton methods, e.g., BHHH, BFGS. Finally, one can note that the symmetric multinomial performs better than the positive one which can be explained by the fact that the optimal solution has vanishing eigenvalues which make the retraction on the cone of positive matrices non-efficient.

\section{Conclusion} \label{sec:con}

This paper proposes a Riemannian geometry-based framework to solve a subset of convex (and non-convex) optimization problems in which the variable of interest represents a multidimensional probability distribution function. The optimization problems are reformulated from constrained optimizations into unconstrained ones over a restricted search space. The fundamental philosophy of the Riemannian optimization is to take advantage of the low-dimension manifold in which the solution lives and to use efficient unconstrained optimization algorithms while ensuring that each update remains feasible. The geometrical structure of the doubly stochastic, symmetric stochastic, and the definite multinomial manifold is studied and efficient first and second order optimization algorithms are proposed. Simulation results reveal that the proposed approach outperforms conventional generic and specialized solvers in efficiently solving the problem in high dimensions.

\appendices
\numberwithin{equation}{section}

\section{Computation of the Tangent Space} \label{app1}

This section computes the tangent space of the different manifold of interest in this paper. Recall that the tangent space $\mathcal{T}_x\mathcal{M}$ of the manifold $\mathcal{M}$ at $x$ is the $d$-dimensional vector space generated by the derivative of all curves going through $x$. Therefore, the computation of such tangent space requires considering a generic smooth curve $\gamma(t): \mathds{R} \longrightarrow \mathcal{M}$ such as $\gamma(0)=x$ and $\gamma(t) \in \mathcal{M}$ for some $t$ in the neighborhood of $0$. The evaluation of the derivative of such parametric curves at the origin generates a vector space $\mathcal{D}$ such that $\mathcal{T}_x\mathcal{M} \subseteq \mathcal{D}$. Two approaches can be used to show the converse. If the dimension of the manifold is known apriori and match the dimension of $\mathcal{D}$, then $\mathcal{T}_x\mathcal{M} = \mathcal{D}$. Such approach is referred as dimension count. The second and more direct method is to consider each element $d \in \mathcal{D}$ and construct a smooth curve $\gamma(t) \in \mathcal{M}$ for some $t \in \mathcal{I} \subset \mathds{R}$ such that $\gamma(0)=x$ and $\gamma^\prime(0)=d$. For illustration purposes, the first and second subsections use the former and latter approaches, respectively.  

\subsection{Proof of \preref{pre1}}

Recall the definition of the doubly stochastic manifold $\mathds{D}\mathds{P}_{n} = \left\{ \mathbf{X} \in \mathds{R}^{n \times n} \big|  X_{ij} > 0,\ \mathbf{X}\mathbf{1}=\mathbf{1},\ \mathbf{X}^T\mathbf{1}=\mathbf{1} \right\}$. Consider an $\mathbf{X} \in \mathds{D}\mathds{P}_{n}$ and let $\mathbf{X}(t)$ be a smooth curve such that $\mathbf{X}(0)=\mathbf{X}$. Since $\mathbf{X}(t) \in \mathds{D}\mathds{P}_{n}$ for some $t$ in the neighborhood of the origin, then the curve satisfies:
\begin{align}
\mathbf{X}(t)\mathbf{1}=\mathbf{1} &\Rightarrow \dot{\mathbf{X}}(t)\mathbf{1}=\mathbf{0}\\
(\mathbf{X}(t))^T\mathbf{1}=\mathbf{1} &\Rightarrow (\dot{\mathbf{X}}(t))^T\mathbf{1}=\mathbf{0}
\end{align}
Differentiating both equations above concludes that the tangent space is a subset of 
\begin{align}
\mathcal{T}_{\mathbf{X}}\mathds{D}\mathds{P}_{n} \subseteq \left\{\mathbf{Z} \in \mathds{R}^{n \times n} \big| \mathbf{Z}\mathbf{1}=\mathbf{0},\ \mathbf{Z}^T\mathbf{1}=\mathbf{0} \right\}.
\end{align}

From the Birkhoff-von Neumann theorem, the degrees of freedom of doubly stochastic matrices is $(n-1)^2$. Similarly, one can note that the above space is generated by $2n-1$ independent linear equations (the sum of the last column can be written as the difference of the sum of all row and the some of all except the last column). In other words, the dimension of the space is $n^2-(2n-1)=(n-1)^2$. Therefore, a dimension count argument concludes that the tangent space has the above expression.

\subsection{Proof of \preref{pre2}}

The symmetric multinomial manifold has the following expression $\mathds{S}\mathds{P}_{n} = \left\{ \mathbf{X} \in \mathds{R}^{n \times n} \big|  X_{ij} > 0,\ \mathbf{X}\mathbf{1}=\mathbf{1},\ \mathbf{X} = \mathbf{X}^T  \right\}$. Therefore, a smooth curve $\mathbf{X}(t)$ that goes through a point $\mathbf{X} \in \mathds{S}\mathds{P}_{n}$ satisfies:
\begin{align}
\mathbf{X}(t)=(\mathbf{X}(t))^T &\Rightarrow \dot{\mathbf{X}}(t)=(\dot{\mathbf{X}}(t))^T\\
\mathbf{X}(t)\mathbf{1}=\mathbf{1} &\Rightarrow \dot{\mathbf{X}}(t)\mathbf{1}=\mathbf{0}
\end{align}
which concludes that the tangent space $\mathcal{T}_{\mathbf{X}}\mathds{D}\mathds{P}_{n}$ is included in the set $\left\{\mathbf{Z} \in \mathcal{S}_n \big| \mathbf{Z}\mathbf{1}=\mathbf{0}\right\}$. Now consider $\mathbf{Z}$ in the above set and the smooth curve $\gamma(t) = \mathbf{X} + t\mathbf{Z}$. Clearly, $\gamma(t)=(\gamma(t))^T$ for all $t\in \mathds{R}$. Furthermore, we have:
\begin{align}
\gamma(t)\mathbf{1}=\mathbf{X}\mathbf{1} + t\mathbf{Z}\mathbf{1}=\mathbf{X}\mathbf{1}=\mathbf{1}
\end{align}
Finally, since $X_{ij}>0$ defines an open set, then there exists an interval $\mathcal{I} \subset \mathds{R}$ such that $\gamma(t)_{ij} >0$. Finally, it is clear that $\gamma(\mathbf{0})=\mathbf{X}$ and $\gamma^\prime(\mathbf{0})=\mathbf{Z}$ which concludes that:
\begin{align}
\mathcal{T}_{\mathbf{X}}\mathds{S}\mathds{P}_{n} = \left\{\mathbf{Z} \in \mathcal{S}_n \big| \mathbf{Z}\mathbf{1}=\mathbf{0}\right\}.
\end{align}

\section{Orthogonal Projection on the Tangent Space} \label{app2}

This section describes the general procedure to obtain the orthogonal projection to manifolds of interest in this paper. For a manifold $\mathcal{M}$ embedded in a vector space $\mathcal{E}$, the orthogonal projection $\Pi_x(z)$ projects a point $z \in \mathcal{E}$ onto the tangent space $\mathcal{T}_x\mathcal{M}$ for some $x \in \mathcal{M}$. The term orthogonal herein refers to the fact that the difference $z - \Pi_x(z)$ is orthogonal to the space $\mathcal{T}_x\mathcal{M}$ for the inner product $\langle.,.\rangle_x$. Therefore, the first step in deriving the expression of the orthogonal projection onto the tangent space $\mathcal{T}_x\mathcal{M}$ is to determine its orthogonal complement $\mathcal{T}^\perp_x\mathcal{M}$ defined as:
\begin{align}
\mathcal{T}^\perp_x\mathcal{M} = \{ z^\perp \in \mathcal{E} \  | \  \langle z^\perp,z\rangle_x, \ \forall \ z \in \mathcal{T}_x\mathcal{M} \}
\end{align}

As the dimension of $\mathcal{T}^\perp_x\mathcal{M}$ can be written as $\text{dim}(\mathcal{E})-\text{dim}(\mathcal{T}_x\mathcal{M})$, a typically method for deriving the expression of the orthogonal complement is to find a generating family for the space and check its dimension. Now, let $\Pi^\perp_x(z)$ be the orthogonal projection onto the orthogonal complement $\mathcal{T}^\perp_x\mathcal{M}$, each point in the ambient space can be decomposed as:
\begin{align}
z = \Pi_x(z)+\Pi^\perp_x(z), \ \forall \ z \in \mathcal{E}, \ \forall \ x \in \mathcal{M}.
\end{align}

Using the expressions of both the tangent set and its complement, the above equation allows deriving the expressions of both projections simultaneously. The next subsections compute the orthogonal projection on the set of double stochastic and symmetric multinomial manifolds using the described method.

\subsection{Proof of \thref{th1}}

As stated earlier, the first step in deriving the expression of the orthogonal projection onto the tangent space, one needs to derive the expression on the orthogonal complement which is given in the following lemma.
\begin{lemma}
The orthogonal complement of the tangent space of the doubly stochastic multinomial has the following expression:
\begin{align}
\mathcal{T}^\perp_{\mathbf{X}}\mathds{D}\mathds{P}_{n} = \left\{ \mathbf{Z}^\perp \in \mathds{R}^{n \times n} \big| \mathbf{Z}^\perp = (\alpha \mathbf{1}^T + \mathbf{1} \beta^T ) \odot \mathbf{X} \right\}
\end{align}
for some $\alpha,\beta \in \mathds{R}^{n}$.
\label{lem2}
\end{lemma}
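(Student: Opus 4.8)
The plan is to establish the claimed expression by the containment-plus-dimension-count strategy outlined at the start of \appref{app2}. First I would verify that every matrix of the form $\mathbf{Z}^\perp = (\alpha \mathbf{1}^T + \mathbf{1}\beta^T)\odot \mathbf{X}$ is orthogonal, in the Fisher metric $\langle\cdot,\cdot\rangle_\mathbf{X}$, to the tangent space $\mathcal{T}_\mathbf{X}\mathds{D}\mathds{P}_{n}$ given in \preref{pre1}. The point is that the entrywise division by $\mathbf{X}_{ij}$ in the metric exactly cancels the Hadamard factor $\mathbf{X}$: since $(\mathbf{Z}^\perp)_{ij} = (\alpha_i+\beta_j)\mathbf{X}_{ij}$, for any $\mathbf{Z} \in \mathcal{T}_\mathbf{X}\mathds{D}\mathds{P}_{n}$ we get $\langle \mathbf{Z}^\perp, \mathbf{Z}\rangle_\mathbf{X} = \sum_{i,j}(\alpha_i + \beta_j)\mathbf{Z}_{ij} = \alpha^T(\mathbf{Z}\mathbf{1}) + \beta^T(\mathbf{Z}^T\mathbf{1}) = 0$, using $\mathbf{Z}\mathbf{1} = \mathbf{Z}^T\mathbf{1} = \mathbf{0}$. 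Hence the right-hand set is contained in $\mathcal{T}^\perp_\mathbf{X}\mathds{D}\mathds{P}_{n}$.

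Next I would compute the dimension of the right-hand set, which is the image of the linear map $L:\mathds{R}^{n}\times\mathds{R}^{n}\to\mathds{R}^{n\times n}$ defined by $L(\alpha,\beta)=(\alpha\mathbf{1}^T+\mathbf{1}\beta^T)\odot\mathbf{X}$. Because every entry $\mathbf{X}_{ij}$ is strictly positive, $L(\alpha,\beta)=\mathbf{0}$ forces $\alpha_i+\beta_j=0$ for all $i,j$; fixing $j$ determines $\alpha$ up to the constant $\beta_j$, and varying $j$ shows all the $\beta_j$ coincide, so the kernel is exactly $\{(c\mathbf{1},-c\mathbf{1})\}$ and $\dim\operatorname{Im}L = 2n-1$. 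On the other hand, \preref{pre1} and its dimension count give $\dim \mathcal{T}_\mathbf{X}\mathds{D}\mathds{P}_{n} = (n-1)^2$, so $\dim\mathcal{T}^\perp_\mathbf{X}\mathds{D}\mathds{P}_{n} = n^2-(n-1)^2 = 2n-1$. A subspace contained in another of the same finite dimension must equal it, which yields the claimed equality.

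I expect the only delicate point to be the kernel computation, namely arguing carefully that the condition $\alpha_i + \beta_j = 0$ for \emph{all} pairs $(i,j)$ collapses the nominal $2n$ parameters to a single scalar degree of freedom. Everything else follows directly from the Fisher-metric cancellation and the dimension formula already recorded after \preref{pre1}. Should the dimension argument feel unsatisfying, an alternative is to take an arbitrary $\mathbf{Z}^\perp \in \mathcal{T}^\perp_\mathbf{X}\mathds{D}\mathds{P}_{n}$, set $\mathbf{W} = \mathbf{Z}^\perp \oslash \mathbf{X}$, and observe that $\langle \mathbf{Z}^\perp,\mathbf{Z}\rangle_\mathbf{X} = \operatorname{Tr}(\mathbf{W}\mathbf{Z}^T) = 0$ for every $\mathbf{Z}$ with vanishing row and column sums forces $\mathbf{W}$ to lie in the span of $\{\mathbf{e}_i\mathbf{1}^T\} \cup \{\mathbf{1}\mathbf{e}_j^T\}$, i.e.\ $\mathbf{W} = \alpha\mathbf{1}^T + \mathbf{1}\beta^T$; but the dimension-count proof is shorter and is the one I would present.
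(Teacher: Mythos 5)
Your proposal is correct and follows essentially the same route as the paper: show that every matrix of the form $(\alpha\mathbf{1}^T+\mathbf{1}\beta^T)\odot\mathbf{X}$ is Fisher-orthogonal to the tangent space via the cancellation $\mathbf{Z}^\perp\oslash\mathbf{X}=\alpha\mathbf{1}^T+\mathbf{1}\beta^T$, then conclude by matching the dimension $2n-1$ against $n^2-(n-1)^2$. The only difference is that you explicitly compute the kernel of the parameterization $(\alpha,\beta)\mapsto(\alpha\mathbf{1}^T+\mathbf{1}\beta^T)\odot\mathbf{X}$ to justify that the set has dimension exactly $2n-1$, a detail the paper asserts without proof; this is a welcome addition rather than a deviation.
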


\begin{proof}
As stated in the introduction of the section, the computation of the orthogonal complement of the tangent space requires on deriving a basis for the space and counting the dimension. Let $\mathbf{Z}^\perp \in \mathcal{T}^\perp_{\mathbf{X}}\mathds{D}\mathds{P}_{n}$ and $\mathbf{Z} \in \mathcal{T}_{\mathbf{X}}\mathds{D}\mathds{P}_{n}$, the inner product can be written as:
\begin{align}
\langle \mathbf{Z}^\perp,\mathbf{Z} \rangle_{\mathbf{X}} &= \text{Tr}((\mathbf{Z}^\perp \oslash \mathbf{X})\mathbf{Z}^T) \nonumber \\
&= \text{Tr}((\alpha \mathbf{1}^T + \mathbf{1} \beta^T)\mathbf{Z}^T) \nonumber \\
&= \alpha^T \mathbf{Z} \mathbf{1}  + \beta^T \mathbf{Z}^T\mathbf{1} 
\end{align}
But $\mathbf{Z} \mathbf{1} = \mathbf{Z}^T\mathbf{1} = \mathbf{0}$ by definition of the tangent space. Therefore, we have  $\langle \mathbf{Z},\mathbf{Z}^\perp \rangle_x, \ \forall \ \mathbf{Z} \in \mathcal{T}_{\mathbf{X}}\mathds{D}\mathds{P}_{n}$. 

Finally, one can note that the dimension of set is $2n-1$ which is the correct dimension for $\mathcal{T}^\perp_{\mathbf{X}}\mathds{D}\mathds{P}_{n}$. Therefore, the derived set is the orthogonal complement of the tangent space.
\end{proof}

Let $\mathbf{Z} \in \mathds{R}^{n \times n}$ be a vector in the ambient space and $\mathbf{X} \in \mathds{D}\mathds{P}_{n}$. The expression of the projections are obtained using the following decomposition:
\begin{align}
\mathbf{Z} &= \Pi_{\mathbf{X}}(\mathbf{Z}) + \Pi^\perp_{\mathbf{X}}(\mathbf{Z}) \nonumber \\
\mathbf{Z}\mathbf{1}  &= \Pi_{\mathbf{X}}(\mathbf{Z})\mathbf{1} +\Pi^\perp_{\mathbf{X}}(\mathbf{Z}) \mathbf{1} \label{eq:app22}
\end{align}
However, by definition of the tangent space, the first term in the right hand side in the above equation vanishes. Similarly, from \lref{lem2}, the second term can be replaced by its $(\alpha \mathbf{1}^T + \mathbf{1} \beta^T ) \odot \mathbf{X}$. Therefore the first equation implies 
\begin{align}
\mathbf{Z}\mathbf{1} &= ((\alpha \mathbf{1}^T + \mathbf{1} \beta^T ) \odot \mathbf{X})\mathbf{1} \nonumber \\
\sum_{j=1}^n \mathbf{Z}_{ij} &= \sum_{j=1}^n (\alpha_i + \beta_j)\mathbf{X}_{ij}, 1 \leq i \leq n \nonumber \\
\sum_{j=1}^n \mathbf{Z}_{ij} &= \alpha_i + \sum_{j=1}^n \beta_j\mathbf{X}_{ij}, 1 \leq i \leq n \nonumber \\
\mathbf{Z}\mathbf{1} &= \alpha + \mathbf{X}\beta \label{eq:app25}
\end{align}
A similar argument allows to conclude that $\mathbf{Z}^T\mathbf{1} = \mathbf{X}^T\alpha+ \beta$. Grouping the equations above gives the following system of equations:
\begin{align}
\begin{pmatrix}
\mathbf{Z}\mathbf{1} \\
\mathbf{Z}^T\mathbf{1}
\end{pmatrix} = 
\begin{pmatrix}
\mathbf{I} & \mathbf{X} \\
\mathbf{X}^T & \mathbf{I}
\end{pmatrix}
\begin{pmatrix}
\alpha \\
\beta
\end{pmatrix} \label{eq:24}
\end{align}

Even though the matrix $\begin{pmatrix}
\mathbf{I} & \mathbf{X} \\
\mathbf{X}^T & \mathbf{I}
\end{pmatrix}$ is rank deficient to the present of the null vector at $\begin{pmatrix}
\mathbf{1}  \\
-\mathbf{1}
\end{pmatrix}$, the systems admits infinitely many solutions. Indeed, from the orthogonal complement identification of the range and null space of a matrix $\mathbf{A}$, i.e., $\mathcal{R}(\mathbf{A}) = \mathcal{N}^\perp(\mathbf{A})$, it is sufficient to show that the vector of interest is orthogonal to the null space of the matrix of interest as follows:
\begin{align}
\begin{pmatrix}
\mathbf{Z}\mathbf{1} \\
\mathbf{Z}^T\mathbf{1}
\end{pmatrix}^T \begin{pmatrix}
\mathbf{1}  \\
-\mathbf{1}
\end{pmatrix} &= \mathbf{1}^T\mathbf{Z}^T\mathbf{1} - \mathbf{1}^T\mathbf{Z}\mathbf{1} \nonumber \\
&= \mathbf{1}^T\mathbf{Z}\mathbf{1}-\mathbf{1}^T\mathbf{Z}\mathbf{1} = \mathbf{0}
\end{align}

A particular solution to the system is the solve for $\beta$ as a function of $\alpha$ and solve for $\alpha$ which gives the following solution
\begin{align}
\alpha &= (\mathbf{I} - \mathbf{X}\mathbf{X}^T)^{\dagger}(\mathbf{Z} - \mathbf{X}\mathbf{Z}^T)\mathbf{1}  \\
\beta &= \mathbf{Z}^T\mathbf{1} - \mathbf{X}^{T}\alpha \label{eq:app23}
\end{align}
Finally, rearranging the terms in \eref{eq:app22} allows to conclude that the orthogonal projection onto the tangent space has the following expression
\begin{align}
\Pi_\mathbf{X}(\mathbf{Z}) = \mathbf{Z} - (\alpha \mathbf{1}^T + \mathbf{1} \beta^T ) \odot \mathbf{X},
\end{align}
wherein $\alpha$ and $\beta$ are obtained according to \eref{eq:app23} or more generally \eref{eq:24}.

\subsection{Proof of \thref{th2}}

The proof of this theorem is closely related to the proof of \thref{th1}. Indeed, the next section shows that the orthogonal projection on the symmetric double stochastic multinomial manifold is a special case of the projection on the doubly stochastic multinomial manifold. This section provides a more direct proof that does not reply on the previously derived result. The expression of the orthogonal complement of the tangent space is given in the following lemma.
\begin{lemma}
The orthogonal complement of the tangent space of the symmetric multinomial can be represented by the following set:
\begin{align}
\mathcal{T}^\perp_{\mathbf{X}}\mathds{S}\mathds{P}_{n} = \left\{ \mathbf{Z}^\perp \in \mathds{S}_{n} \big| \mathbf{Z}^\perp = (\alpha \mathbf{1}^T + \mathbf{1} \alpha^T ) \odot \mathbf{X} \right\}
\end{align}
for some $\alpha \in \mathds{R}^{n}$.
\label{lem3}
\end{lemma}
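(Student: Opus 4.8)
The plan is to mirror the argument used for \lref{lem2} in the doubly stochastic case: first show that every matrix of the proposed form lies in the ambient space $\mathcal{S}_n$ and is orthogonal to $\mathcal{T}_{\mathbf{X}}\mathds{S}\mathds{P}_{n}$ with respect to the Fisher metric, and then close the argument with a dimension count rather than by direct characterization of all orthogonal elements.

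For the easy inclusion, fix $\alpha \in \mathds{R}^n$ and set $\mathbf{Z}^\perp = (\alpha \mathbf{1}^T + \mathbf{1}\alpha^T) \odot \mathbf{X}$. Since $\alpha\mathbf{1}^T + \mathbf{1}\alpha^T$ is symmetric and $\mathbf{X} = \mathbf{X}^T$, the Hadamard product $\mathbf{Z}^\perp$ is symmetric, so $\mathbf{Z}^\perp \in \mathcal{S}_n$. For any $\mathbf{Z} \in \mathcal{T}_{\mathbf{X}}\mathds{S}\mathds{P}_{n}$, i.e.\ $\mathbf{Z}\in\mathcal{S}_n$ with $\mathbf{Z}\mathbf{1}=\mathbf{0}$, I would compute, exactly as in \lref{lem2},
\begin{align}
\langle \mathbf{Z}^\perp, \mathbf{Z}\rangle_{\mathbf{X}} = \text{Tr}\big((\mathbf{Z}^\perp \oslash \mathbf{X})\mathbf{Z}^T\big) = \text{Tr}\big((\alpha\mathbf{1}^T + \mathbf{1}\alpha^T)\mathbf{Z}^T\big) = \alpha^T\mathbf{Z}\mathbf{1} + \alpha^T\mathbf{Z}^T\mathbf{1}, \nonumber
\end{align}
and since $\mathbf{Z}$ is symmetric with $\mathbf{Z}\mathbf{1}=\mathbf{0}$ both terms vanish. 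Hence the proposed set is contained in $\mathcal{T}^\perp_{\mathbf{X}}\mathds{S}\mathds{P}_{n}$.

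For the reverse inclusion I would use dimensions. The ambient space $\mathcal{S}_n$ has dimension $n(n+1)/2$, and by \preref{pre2} the tangent space is cut out by the $n$ equations $\mathbf{Z}\mathbf{1}=\mathbf{0}$, which remain linearly independent when restricted to symmetric matrices (they already separate diagonal matrices, where $(\mathbf{Z}\mathbf{1})_k = \mathbf{Z}_{kk}$); thus $\dim\mathcal{T}_{\mathbf{X}}\mathds{S}\mathds{P}_{n} = n(n-1)/2$ and $\dim\mathcal{T}^\perp_{\mathbf{X}}\mathds{S}\mathds{P}_{n} = n$. It then suffices to verify that the linear map $\alpha \mapsto (\alpha\mathbf{1}^T + \mathbf{1}\alpha^T)\odot\mathbf{X}$ from $\mathds{R}^n$ to $\mathcal{S}_n$ is injective: if $(\alpha\mathbf{1}^T + \mathbf{1}\alpha^T)\odot\mathbf{X} = \mathbf{0}$, then since $\mathbf{X}_{ij}>0$ we get $\alpha_i + \alpha_j = 0$ for all $i,j$, and taking $i=j$ forces $\alpha = \mathbf{0}$. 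So the proposed set is an $n$-dimensional subspace of $\mathcal{T}^\perp_{\mathbf{X}}\mathds{S}\mathds{P}_{n}$ and therefore equals it.

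The computations are routine; the only point needing care is the dimension bookkeeping, in particular confirming that the $n$ row-sum constraints stay independent after restricting to $\mathcal{S}_n$ (hence pointing to diagonal matrices as witnesses) and keeping track that the ambient space here is $\mathcal{S}_n$ rather than $\mathds{R}^{n\times n}$, so the target dimension of the complement drops from $2n-1$ in \lref{lem2} to $n$. This also explains why a single vector $\alpha$ suffices in place of the pair $(\alpha,\beta)$ of the doubly stochastic case.
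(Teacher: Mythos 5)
Your proof is correct and follows essentially the same route as the paper, which omits the argument for this lemma and simply points to the proof of \lref{lem2}: verify orthogonality of the proposed matrices to the tangent space under the Fisher metric, then close with a dimension count. You in fact supply details the paper leaves implicit (the independence of the $n$ row-sum constraints on $\mathcal{S}_n$ and the injectivity of $\alpha \mapsto (\alpha\mathbf{1}^T + \mathbf{1}\alpha^T)\odot\mathbf{X}$, which is where the count drops from $2n-1$ to $n$), and these are handled correctly.
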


\begin{proof}
The proof of this lemma uses similar steps like the one of \lref{lem2} and thus is omitted herein.
\end{proof}

Let $\mathbf{Z} \in \mathds{R}^{n \times n}$ be a vector in the ambient space and $\mathbf{X} \in \mathds{D}\mathds{P}_{n}$. The decomposition of $\mathbf{Z}$ gives the following:
\begin{align}
\mathbf{Z} &= \Pi_{\mathbf{X}}(\mathbf{Z}) + \Pi^\perp_{\mathbf{X}}(\mathbf{Z}) \nonumber \\
\mathbf{Z}\mathbf{1}  &= \Pi_{\mathbf{X}}(\mathbf{Z})\mathbf{1} +\Pi^\perp_{\mathbf{X}}(\mathbf{Z}) \mathbf{1}  \nonumber \\
\mathbf{Z}\mathbf{1}  &= ((\alpha \mathbf{1}^T + \mathbf{1} \alpha^T ) \odot \mathbf{X}) \mathbf{1} \nonumber \\
\mathbf{Z}\mathbf{1} &=  \alpha + \mathbf{X}\alpha = (\mathbf{I}+\mathbf{X})\alpha 
\nonumber \\
\alpha &=  (\mathbf{I}+\mathbf{X})^{-1}\mathbf{Z}\mathbf{1}, \label{eq:app26}
\end{align}
wherein the steps of the computation are obtained in a similar fashion as the one in \eref{eq:app25}. Therefore, the orthogonal projection on the tangent set of symmetric doubly stochastic multinational manifold is given by:
\begin{align}
\Pi_\mathbf{X}(\mathbf{Z}) = \mathbf{Z} - (\alpha \mathbf{1}^T + \mathbf{1} \alpha^T ) \odot \mathbf{X},
\end{align}
with $\alpha$ being derived in \eref{eq:app26}.

\subsection{Relationship Between the Orthogonal Projections}

One can note that the projection onto the tangent space of the symmetric multinomial is a special case of the projection onto the tangent space of the doubly stochastic manifold when both the point on the manifold $\mathbf{X}$ and the ambient vector $\mathbf{Z}$ are symmetric. In other words, the projection can be written as
\begin{align}
\Pi_\mathbf{X}(\mathbf{Z}) = \mathbf{Z} - (\alpha \mathbf{1}^T + \mathbf{1} \beta^T ) \odot \mathbf{X},
\end{align}
with
\begin{align}
\alpha &= (\mathbf{I} - \mathbf{X}\mathbf{X}^T)^{\dagger}(\mathbf{Z} - \mathbf{X}\mathbf{Z}^T)\mathbf{1}  \\
\beta &= (\mathbf{Z}^T - (\mathbf{X}^{-T} - \mathbf{X})^{\dagger}(\mathbf{Z} - \mathbf{X}\mathbf{Z}^T))\mathbf{1}
\end{align}
and the additional identities $\mathbf{X}=\mathbf{X}^T$ and $\mathbf{Z}=\mathbf{Z}^T$. Using the economic eigenvalue decomposition of the symmetric matrix $\mathbf{X}=\mathbf{U}\mathbf{\Lambda}\mathbf{U}^T$, vector $\alpha$ can be expressed as:
\begin{align}
\alpha &= (\mathbf{I} - \mathbf{X}\mathbf{X}^T)^{\dagger}(\mathbf{Z} - \mathbf{X}\mathbf{Z}^T)\mathbf{1} \nonumber \\
&= (\mathbf{I} - \mathbf{X}\mathbf{X}^T)^{\dagger}\mathbf{Z}\mathbf{1} -  (\mathbf{I} - \mathbf{X}\mathbf{X}^T)^{\dagger}\mathbf{X}\mathbf{Z}^T\mathbf{1} \nonumber \\
&= (\mathbf{I} - \mathbf{X}^2)^{\dagger}\mathbf{Z}\mathbf{1} -  (\mathbf{X}^{-1} - \mathbf{X})^{\dagger}\mathbf{Z}\mathbf{1} \nonumber \\
&= 
\mathbf{U} \left[(\mathbf{I} - \mathbf{\Lambda}^2)^{-1}-  (\mathbf{\Lambda}^{-1} - \mathbf{\Lambda})^{-1} \right]\mathbf{U}^T \mathbf{Z}\mathbf{1}
\end{align}
The inner matrix is a diagonal one with diagonal entries equal to:
\begin{align}
\cfrac{1}{1-\lambda^2} - \cfrac{1}{\cfrac{1}{\lambda}-\lambda} &= \cfrac{1}{1-\lambda^2} - \cfrac{\lambda}{1-\lambda^2} = \cfrac{1}{1+\lambda}
\end{align}
Therefore, $(\mathbf{I} - \mathbf{\Lambda}^2)^{-1}-  (\mathbf{\Lambda}^{-1} - \mathbf{\Lambda})^{-1} = (\mathbf{I} + \mathbf{\Lambda})^{-1}$ which gives the final expression of $\alpha$ as:
\begin{align}
\alpha &= (\mathbf{I} + \mathbf{X})^{-1}\mathbf{Z}\mathbf{1}.
\end{align}
Finally, the expression of $\beta$ is given by:
\begin{align}
\beta &= \mathbf{Z}^T\mathbf{1}-\mathbf{X}^T\alpha = (\mathbf{I}-\mathbf{X}(\mathbf{I} + \mathbf{X})^{-1})\mathbf{Z}\mathbf{1} \nonumber \\
&= \mathbf{U}\left[ \mathbf{I}-\mathbf{\Lambda}(\mathbf{I} + \mathbf{\Lambda})^{-1}) \right]\mathbf{U}^T\mathbf{Z}\mathbf{1},
\end{align} 
with the inner matrix equals to:
\begin{align}
1 - \lambda \cfrac{1}{1+\lambda} = \cfrac{1}{1+\lambda} \Rightarrow (\mathbf{I}-\mathbf{X}(\mathbf{I} + \mathbf{X})^{-1}) = (\mathbf{I} + \mathbf{X})^{-1} \nonumber
\end{align}
Therefore, we conclude that $\beta = (\mathbf{I} + \mathbf{X})^{-1}\mathbf{Z}\mathbf{1} = \alpha$ which is in accordance with the result derived in \thref{th2}.
\begin{remark}
Note that the link between both expressions can be obtained easier by assuming that $(\mathbf{I} - \mathbf{X}\mathbf{X}^T)$ is invertible. Indeed, for example the expression of $\alpha$ can be easily computed as:
\begin{align}
\alpha &= (\mathbf{I} - \mathbf{X}\mathbf{X}^T)^{-1}(\mathbf{Z} - \mathbf{X}\mathbf{Z}^T)\mathbf{1} \nonumber \\
 &= (\mathbf{I} - \mathbf{X}\mathbf{X})^{-1}(\mathbf{Z} - \mathbf{X}\mathbf{Z})\mathbf{1} \nonumber \\
&= (\mathbf{I} + \mathbf{X})^{-1}(\mathbf{I} -\mathbf{X})^{-1} ( \mathbf{I} - \mathbf{X})\mathbf{Z}\mathbf{1} \nonumber \\
&= (\mathbf{I} + \mathbf{X})^{-1}\mathbf{Z}\mathbf{1}
\end{align}
However, due to eigenvalue at $1$ from $\mathbf{X}\mathbf{1}=\mathbf{1}$, such proof is not valid and we need to use the pseudo-inverse as shown in the section above.
\end{remark}

\section{Retraction on Embedded Manifolds} \label{app3}

This section exploits the vector space structure of the embedding space to design efficient, i.e., low-complexity, retractions on the manifolds of interest in this paper. The construction of the retraction rely on the following theorem whose proof can be found in \cite{AbsMahSep2008}.
\begin{theorem}
Let $\mathcal{M}$ be an embedded manifold of the Euclidean space $\mathcal{E}$ and let $\mathcal{N}$ be an abstract manifold such that dim($\mathcal{M}$) + dim($\mathcal{N}$) = dim($\mathcal{E}$). Assume that there is a diffeomorphism
\begin{align}
\phi: \mathcal{M} \times \mathcal{N} &\longrightarrow \mathcal{E}^* \nonumber \\
 (\mathbf{F},\mathbf{G}) &\longmapsto \phi(\mathbf{F},\mathbf{G})
\end{align}
where $\mathcal{E}^*$ is an open subset of $\mathcal{E}$, with a neutral element $\mathbf{I} \in \mathbf{N}$ satisfying
\begin{align}
\phi(\mathbf{F},\mathbf{I}) = \mathbf{F}, \ \forall \ \mathbf{F} \in \mathcal{M}
\end{align}
Under the above assumption, the mapping 
\begin{align}
R_x: \mathcal{T}_x\mathcal{M} &\longrightarrow \mathcal{M}  \nonumber \\
\xi_x  &\longmapsto  R_x(\xi_x) = \pi_1(\phi^{-1}(x+\xi_x)),
\end{align}
where $\pi_1: \mathcal{M} \times \mathcal{N} \longrightarrow \mathcal{M}: (\mathbf{F},\mathbf{G}) \longmapsto \mathbf{F}$ is the projection onto the first component, defines a retraction on the manifold $\mathcal{M}$ for all $x \in \mathcal{M}$ and $\xi_x$ in the neighborhood of $0_x$.
\label{th3}
\end{theorem}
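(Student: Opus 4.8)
The plan is to check the three defining properties of a retraction from \sref{sec:opt}: that $R$ is a smooth map defined on a neighborhood of the zero section of $\mathcal{T}\mathcal{M}$ into $\mathcal{M}$, that it is centered ($R_x(0_x)=x$), and that it satisfies local rigidity ($DR_x(0_x)$ is the identity on $T_x\mathcal{M}$). The first two properties are essentially immediate from the neutral-element identity and the openness of $\mathcal{E}^*$; local rigidity is where the product structure of $\phi$ together with the dimension hypothesis must be used.

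First I would establish that $R_x$ is well defined near $0_x$ and smooth. Since $\phi(x,\mathbf{I})=x$, the point $x$ lies in the open set $\mathcal{E}^*$, so there is $\epsilon>0$ with $x+\xi_x\in\mathcal{E}^*$ whenever $\|\xi_x\|<\epsilon$; on this neighborhood $R_x(\xi_x)=\pi_1(\phi^{-1}(x+\xi_x))$ is the composition of the smooth translation $\xi_x\mapsto x+\xi_x$, the diffeomorphism $\phi^{-1}\colon\mathcal{E}^*\to\mathcal{M}\times\mathcal{N}$, and the smooth projection $\pi_1$, hence smooth (jointly in $x$ and $\xi_x$). Centering is a direct substitution: $R_x(0_x)=\pi_1(\phi^{-1}(x))=\pi_1(\phi^{-1}(\phi(x,\mathbf{I})))=\pi_1(x,\mathbf{I})=x$.

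The core step is local rigidity. Consider the curve $\gamma(\tau)=R_x(\tau\xi_x)=\pi_1(\phi^{-1}(x+\tau\xi_x))$; by the chain rule $\gamma'(0)=D\pi_1(x,\mathbf{I})\big[D\phi^{-1}(x)[\xi_x]\big]$, where $T_x\mathcal{E}^*$ is identified with $\mathcal{E}$. The linear map $D\phi(x,\mathbf{I})\colon T_x\mathcal{M}\times T_{\mathbf{I}}\mathcal{N}\to\mathcal{E}$ is an isomorphism because $\phi$ is a diffeomorphism. Differentiating the identity $\phi(\mathbf{F},\mathbf{I})=\mathbf{F}$ along an arbitrary curve in $\mathcal{M}$ through $x$ shows that the restriction of $D\phi(x,\mathbf{I})$ to $T_x\mathcal{M}\times\{0\}$ is exactly the inclusion $T_x\mathcal{M}\hookrightarrow\mathcal{E}$; set $\mathcal{V}_x=D\phi(x,\mathbf{I})[\{0\}\times T_{\mathbf{I}}\mathcal{N}]$. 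Because $\dim\mathcal{M}+\dim\mathcal{N}=\dim\mathcal{E}$ and $D\phi(x,\mathbf{I})$ is injective, we get a direct sum $\mathcal{E}=T_x\mathcal{M}\oplus\mathcal{V}_x$. Now for $\xi_x\in T_x\mathcal{M}$, the unique element of $T_x\mathcal{M}\times T_{\mathbf{I}}\mathcal{N}$ sent to $\xi_x$ by $D\phi(x,\mathbf{I})$ is $(\xi_x,0)$: $(\xi_x,0)$ is a preimage, and the direct-sum decomposition forces the $\mathcal{V}_x$-component, hence the $\mathcal{N}$-component of the preimage, to vanish. Thus $D\phi^{-1}(x)[\xi_x]=(\xi_x,0)$, and applying $D\pi_1$ yields $\gamma'(0)=\xi_x$, which is local rigidity. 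Having verified smoothness, centering, and local rigidity, $R$ is a retraction on $\mathcal{M}$.

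The main obstacle is precisely the claim that the $\mathcal{N}$-component of $D\phi^{-1}(x)[\xi_x]$ vanishes for $\xi_x$ tangent to $\mathcal{M}$: this requires combining the neutral-element identity (to identify $D\phi(x,\mathbf{I})$ on the $\mathcal{M}$-factor with the inclusion) with the dimension equality (to obtain a genuine direct-sum splitting of $\mathcal{E}$, which makes the decomposition of $\xi_x$ unique). Some care is also needed in the routine identifications of $T_x\mathcal{E}^*$ with $\mathcal{E}$ and of $\phi^{-1}$ as a smooth map into the product manifold $\mathcal{M}\times\mathcal{N}$, but these are standard once the main point is in place.
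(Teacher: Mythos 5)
Your proof is correct. The paper does not prove this theorem itself---it defers to \cite{AbsMahSep2008} (Proposition 4.1.2 there)---and your argument is essentially the standard one from that reference: well-definedness and centering follow from the neutral-element identity and openness of $\mathcal{E}^*$, and local rigidity follows by differentiating $\phi(\mathbf{F},\mathbf{I})=\mathbf{F}$ to see that $\mathrm{D}\phi(x,\mathbf{I})$ restricted to $\mathcal{T}_x\mathcal{M}\times\{0\}$ is the inclusion, whence injectivity of $\mathrm{D}\phi(x,\mathbf{I})$ forces $\mathrm{D}\phi^{-1}(x)[\xi_x]=(\xi_x,0)$ and $\mathrm{D}R_x(0_x)=\mathrm{id}$.
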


The upcoming sections take advantage of the matrix decomposition to design a mapping $\phi$. Interestingly, the inverse of the map $\phi$ turns out to be straightforward to compute even though the projection on the doubly stochastic matrices space is challenging. 

\subsection{Proof of \thref{th4}}

This subsection uses the Sinkhorn's theorem \cite{8541523974} to derive an expression for the mapping $\phi$. The Sinkhorn's theorem states:
\begin{theorem}
Let $\mathbf{A} \in \overline{\mathds{R}}^{n \times n}$ be an element-wise positive matrix. There exists two strictly positive diagonal matrices $\mathbf{D}_1$ and $\mathbf{D}_2$ such that $\mathbf{D}_1\mathbf{A} \mathbf{D}_2$ is doubly stochastic.
\end{theorem}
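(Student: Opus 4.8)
The plan is to realize the two scalings as Lagrange multipliers of a strictly convex minimization over the compact Birkhoff polytope. Let $\mathcal{B}_n = \{\mathbf{P} \in \mathds{R}^{n \times n} \mid P_{ij} \geq 0,\ \mathbf{P}\mathbf{1} = \mathbf{1},\ \mathbf{P}^T\mathbf{1} = \mathbf{1}\}$ denote the set of all doubly stochastic matrices, of which $\mathds{D}\mathds{P}_{n}$ is the relative interior. Writing $\mathbf{D}_1 = \mathrm{diag}(r_1,\dots,r_n)$ and $\mathbf{D}_2 = \mathrm{diag}(c_1,\dots,c_n)$ with $r_i, c_j > 0$, the claim amounts to exhibiting a matrix $\mathbf{P} \in \mathcal{B}_n$ of the form $P_{ij} = r_i A_{ij} c_j$. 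I would obtain such a $\mathbf{P}$ as the minimizer over $\mathcal{B}_n$ of the relative entropy
\begin{align}
h(\mathbf{P}) = \sum_{i=1}^n \sum_{j=1}^n P_{ij} \log \frac{P_{ij}}{A_{ij}},
\end{align}
with the convention $0 \log 0 = 0$; this is finite and continuous on $\mathcal{B}_n$ precisely because every $A_{ij} > 0$, and it is strictly convex since $t \mapsto t \log t$ is.

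Since $\mathcal{B}_n$ is compact and $h$ is continuous, a minimizer $\mathbf{P}^\star$ exists (unique by strict convexity, though uniqueness is not needed here). The crux of the argument is to show that $\mathbf{P}^\star$ lies in the relative interior, i.e.\ $P^\star_{ij} > 0$ for all $i,j$. Suppose instead $P^\star_{i_0 j_0} = 0$ for some indices, and consider the segment $\mathbf{P}_t = (1-t)\mathbf{P}^\star + \tfrac{t}{n}\mathbf{1}\mathbf{1}^T$, which stays in $\mathcal{B}_n$ for $t \in [0,1]$ by convexity. Every entry with $P^\star_{ij} > 0$ contributes to $h(\mathbf{P}_t)$ a term that is smooth near $t = 0$ with bounded derivative, whereas the $(i_0,j_0)$ entry contributes $\tfrac{t}{n}\log\frac{t}{nA_{i_0 j_0}}$, whose derivative tends to $-\infty$ as $t \downarrow 0$; hence $h(\mathbf{P}_t) < h(\mathbf{P}^\star)$ for small $t > 0$, contradicting optimality. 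Therefore $\mathbf{P}^\star$ is interior and $h$ is differentiable there.

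At an interior minimizer only the affine constraints $\mathbf{P}\mathbf{1} = \mathbf{1}$ and $\mathbf{P}^T\mathbf{1} = \mathbf{1}$ are active, so Lagrange stationarity yields multipliers $\lambda_i$ and $\mu_j$ with
\begin{align}
\log P^\star_{ij} + 1 - \log A_{ij} - \lambda_i - \mu_j = 0, \qquad 1 \leq i, j \leq n,
\end{align}
i.e.\ $P^\star_{ij} = A_{ij}\, e^{\lambda_i - 1}\, e^{\mu_j}$. Setting $r_i = e^{\lambda_i - 1} > 0$ and $c_j = e^{\mu_j} > 0$, the matrices $\mathbf{D}_1 = \mathrm{diag}(r_1,\dots,r_n)$ and $\mathbf{D}_2 = \mathrm{diag}(c_1,\dots,c_n)$ are strictly positive and diagonal and satisfy $\mathbf{D}_1 \mathbf{A} \mathbf{D}_2 = \mathbf{P}^\star \in \mathcal{B}_n$, which is doubly stochastic, as required. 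The only delicate point is the interior-point claim of the previous paragraph; it is exactly where positivity of all entries of $\mathbf{A}$ enters (so $h$ is finite everywhere and its gradient blows up only along coordinate hyperplanes), and it is what permits the clean Lagrange conditions instead of full KKT conditions — everything else, namely existence of the minimizer, convexity, and the final identification, is routine. Equivalently, one may argue in dual form and take the stationary point of the convex potential $\Phi(\mathbf{x},\mathbf{y}) = \sum_{i,j} A_{ij} e^{x_i + y_j} - \mathbf{1}^T\mathbf{x} - \mathbf{1}^T\mathbf{y}$ with $r_i = e^{x_i}$, $c_j = e^{y_j}$; there the main obstacle migrates to proving coercivity of $\Phi$ modulo its one-dimensional translation invariance $(\mathbf{x},\mathbf{y}) \mapsto (\mathbf{x}+s\mathbf{1},\ \mathbf{y}-s\mathbf{1})$.
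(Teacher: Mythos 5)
Your proof is correct. Note, though, that the paper does not actually prove this statement: it is Sinkhorn's theorem, quoted verbatim and attributed to the cited reference, whose classical proof establishes convergence of the alternating row/column normalization (the Sinkhorn--Knopp iteration) --- the very iteration the paper then reuses as the projection $\mathcal{P}$ onto $\mathds{D}\mathds{P}_n$. Your route is the standard variational alternative: minimize the relative entropy $h(\mathbf{P})=\sum_{i,j}P_{ij}\log(P_{ij}/A_{ij})$ over the Birkhoff polytope, rule out boundary minimizers by the $-\infty$ one-sided derivative of $t\log t$ at $0$ (this is exactly where $A_{ij}>0$ is used), and read off $\mathbf{D}_1,\mathbf{D}_2$ from the Lagrange multipliers of the two families of affine constraints, whose span $\{\lambda\mathbf{1}^T+\mathbf{1}\mu^T\}$ is precisely the orthogonal complement computed in the paper's Lemma~\ref{lem2} (there with respect to the Fisher metric rather than the Frobenius one). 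All steps check out: existence by compactness, interiority by the perturbation along $(1-t)\mathbf{P}^\star+\tfrac{t}{n}\mathbf{1}\mathbf{1}^T$, and the stationarity identity $P^\star_{ij}=A_{ij}e^{\lambda_i-1}e^{\mu_j}$. What each approach buys: yours gives a short, non-algorithmic existence proof plus uniqueness of the doubly stochastic matrix $\mathbf{D}_1\mathbf{A}\mathbf{D}_2$ for free (by strict convexity of $h$), while the iterative proof underlying the citation additionally furnishes the constructive procedure and the smoothness/continuity of $\mathcal{P}$ that the paper's retraction in Lemma~\ref{lem5} relies on; your concluding remark about the dual potential $\Phi$ is also sound, and its coercivity modulo the translation $(\mathbf{x},\mathbf{y})\mapsto(\mathbf{x}+s\mathbf{1},\mathbf{y}-s\mathbf{1})$ would be the analogous hurdle there.
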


Due the invariance of the above theorem for scaling $\mathbf{D}_1$ and $\mathbf{D}_2$, the rest of the paper assumes that $(\mathbf{D}_1)_{11}=1$ without loss of generality. Define the $\phi$ mapping as follows:
\begin{align}
\phi: \mathds{D}\mathds{P}_n \times \overline{\mathds{R}}^{2n-1} &\longrightarrow \overline{\mathds{R}}^{n \times n} \nonumber \\
 \left(\mathbf{A},\begin{pmatrix}
 d_1 \\ d_2
 \end{pmatrix}\right) &\longmapsto \text{diag}(1,d_1) \mathbf{A} \text{diag}(d_2).
\end{align}

Note that $\overline{\mathds{R}}^{2n-1}$ is an open subset of $\mathds{R}^{2n-1}$ and thus is a manifold by definition. Similarly, $\overline{\mathds{R}}^{n \times n}$ is an open subset of $\mathds{R}^{n \times n}$. Finally, $\text{dim}(\mathds{D}\mathds{P}_n) + \text{dim}( \overline{\mathds{R}}^{2n-1}) = (n-1)^2+2n-1 = n^2 = \text{dim}(\mathds{R}^{n \times n})$. Also, the all one element of $\mathds{R}^{2n-1}$ satisfies $\phi(\mathbf{A},\mathbf{1}) = \mathbf{A}$.

Clearly, the mapping $\phi$ is smooth by the smoothness of the matrix product. The existence of the inverse map is guaranteed by the Sinkhorn's theorem. Such inverse map is obtained through the Sinkhorn's algorithm \cite{8541523974} that scales the rows and columns of the matrix, i.e., the inverse map is smooth. Finally, we conclude that $\phi$ represents a diffeomorphism.  

Using the result of \thref{th3}, we conclude that $\pi_1(\phi^{-1}(\mathbf{X}+\xi_{\mathbf{X}}))$ is a valid retraction for $\xi_{\mathbf{X}}$ in the neighborhood of $\mathbf{0}_{\mathbf{X}}$, i.e., $(\mathbf{X}+\xi_{\mathbf{X}}) \in \overline{\mathds{R}}^{n \times n} $ which can explicitly written as $\mathbf{X}_{ij} > - \left(\xi_{\mathbf{X}}\right)_{ij},\ 1 \leq i,j \leq n$. Using the property of the manifold and its tangent space, the inverse map reduce the identity. Indeed, it holds true that:
\begin{align}
(\mathbf{X}+\xi_{\mathbf{X}})\mathbf{1} &= \mathbf{X}\mathbf{1}+\xi_{\mathbf{X}}\mathbf{1} = \mathbf{1}+ \mathbf{0} =\mathbf{1} \\
(\mathbf{X}+\xi_{\mathbf{X}})^T\mathbf{1} &= \mathbf{X}^T\mathbf{1}+\xi_{\mathbf{X}}^T\mathbf{1} = \mathbf{1}+ \mathbf{0} =\mathbf{1}
\end{align}
Therefore, the canonical retraction is defined by  $R_{\mathbf{X}}(\xi_{\mathbf{X}}) = \mathbf{X} + \xi_{\mathbf{X}}$.
 
\subsection{Proof of \corref{cor1}}

The proof of this corollary follows similar steps than the one of \thref{th4}. However, instead of using the Sinkhorn's theorem to find the adequate matrix decomposition, we use its extension to the symmetric case known as the DAD theorem \cite{CSIMA1972147} given below:
\begin{theorem}
Let $\mathbf{A} \in \overline{\mathcal{S}}_n$ be a symmetric, element-wise positive matrix. There exists a strictly positive diagonal matrix $\mathbf{D}$ such that $\mathbf{D}\mathbf{A} \mathbf{D}$ is symmetric doubly stochastic.
\end{theorem}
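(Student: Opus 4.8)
The plan is to exhibit the desired scaling as the stationary point of a convex, coercive function on $\mathds{R}^n$ and to read off the conclusion from the first-order optimality condition. I would parametrize the diagonal matrix as $\mathbf{D} = \mathrm{diag}(e^{x_1},\dots,e^{x_n})$ with $x \in \mathds{R}^n$, so that $(\mathbf{D}\mathbf{A}\mathbf{D})_{ij} = A_{ij}e^{x_i+x_j}$ is automatically symmetric and strictly positive (here we use $\mathbf{A} = \mathbf{A}^T$ and $A_{ij}>0$). The doubly-stochastic requirement then reduces to the single vector equation $(\mathbf{D}\mathbf{A}\mathbf{D})\mathbf{1}=\mathbf{1}$, and a direct differentiation shows that this is exactly $\nabla g(x)=\mathbf{0}$ for
\begin{align}
g(x) = \sum_{i=1}^n\sum_{j=1}^n A_{ij}\,e^{x_i+x_j} \;-\; 2\sum_{i=1}^n x_i. \nonumber
\end{align}
So it suffices to show that $g$ attains its infimum over $\mathds{R}^n$.

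Convexity of $g$ is immediate, each $e^{x_i+x_j}$ being the exponential of an affine form and the last term being linear; hence any critical point is a global minimizer, and a global minimizer of a smooth convex function on $\mathds{R}^n$ is precisely a point where the gradient vanishes. The hard part will be coercivity, $g(x)\to+\infty$ as $\|x\|\to\infty$, and this is where strict positivity of $\mathbf{A}$ is essential. Setting $a=\min_{i,j}A_{ij}>0$, I would use $g(x)\ge a\big(\sum_i e^{x_i}\big)^2-2\sum_i x_i \ge a\,e^{2\max_i x_i}-2n\max_i x_i$, which tends to $+\infty$ whenever $\max_i x_i\to\infty$; and if instead $\max_i x_i$ stays bounded while $\|x\|\to\infty$, then some coordinate, hence $\min_i x_i$, tends to $-\infty$, so $-2\sum_i x_i\to+\infty$ while the exponential sum stays nonnegative, and again $g\to+\infty$. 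Coercivity plus continuity yields a minimizer $x^\star$, and $\nabla g(x^\star)=\mathbf{0}$ gives a strictly positive diagonal $\mathbf{D}=\mathrm{diag}(e^{x^\star})$ with $\mathbf{D}\mathbf{A}\mathbf{D}$ symmetric doubly stochastic.

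An alternative route, closer in spirit to the preceding proof of \corref{cor1}, is to deduce the claim from Sinkhorn's theorem together with the essential uniqueness of the Sinkhorn scaling: applying Sinkhorn to $\mathbf{A}$ produces positive diagonal $\mathbf{D}_1,\mathbf{D}_2$ with $\mathbf{D}_1\mathbf{A}\mathbf{D}_2$ doubly stochastic, transposing and using $\mathbf{A}=\mathbf{A}^T$ shows $\mathbf{D}_2\mathbf{A}\mathbf{D}_1$ is doubly stochastic as well, and uniqueness then forces $\mathbf{D}_2 = c\,\mathbf{D}_1$ for some $c>0$, whence $\mathbf{D}:=\sqrt{c}\,\mathbf{D}_1$ works. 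The drawback of this approach is that it relies on the uniqueness statement for Sinkhorn balancing, which is not recorded in the excerpt; for that reason I would present the variational argument as the primary proof and mention the reduction to Sinkhorn only as a remark.
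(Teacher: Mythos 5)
Your variational argument is correct, but it is worth noting that the paper itself does not prove this statement at all: it is imported verbatim as the DAD theorem with a citation to \cite{CSIMA1972147} and then used as a black box in the proof of \corref{cor1}. Relative to that, your proof is a genuinely self-contained alternative, and it checks out. The parametrization $\mathbf{D}=\mathrm{diag}(e^{x})$ makes $(\mathbf{D}\mathbf{A}\mathbf{D})_{ij}=A_{ij}e^{x_i+x_j}$ symmetric and positive for free, and a direct computation using $\mathbf{A}=\mathbf{A}^T$ gives $\partial g/\partial x_k = 2\bigl(\sum_j A_{kj}e^{x_k+x_j}-1\bigr)$, so $\nabla g=\mathbf{0}$ is exactly the row-sum condition (column sums then follow by symmetry). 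Convexity of $g$ is immediate, and your two-case coercivity argument is sound: the bound $g(x)\ge a\,e^{2\max_i x_i}-2n\max_i x_i$ handles $\max_i x_i\to+\infty$, while if $\max_i x_i$ stays bounded above by some $C$ and $\|x\|\to\infty$, then $\min_i x_i\to-\infty$ along a subsequence and $-2\sum_i x_i\ge -2(n-1)C-2\min_i x_i\to+\infty$ with the exponential sum nonnegative. Coercivity plus continuity gives a minimizer on the open set $\mathds{R}^n$, where the gradient vanishes. What this buys over the paper's treatment is an elementary, constructive-in-spirit existence proof (the objective $g$ is essentially the potential minimized by the DAD/Sinkhorn iteration itself), at the cost of a page of analysis where the paper spends one citation. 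Your secondary route via Sinkhorn's theorem plus essential uniqueness of the scaling is also valid, and you are right to demote it: the uniqueness statement is nowhere recorded in the paper, so it would trade one unproved import for another.
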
 
With the theorem above, define the map $\phi$ as follows:
\begin{align}
\phi: \mathds{S}\mathds{P}_n \times \overline{\mathds{R}}^{n} &\longrightarrow \overline{\mathcal{S}}_n \nonumber \\
 \left(\mathbf{A},d\right) &\longmapsto \text{diag}(d) \mathbf{A} \text{diag}(d).
\end{align}

Similar to the previous proof, $\overline{\mathcal{S}}_n$ is an open subset of the vector space $\mathcal{S}_n$ and $\overline{\mathds{R}}^{n}$ is a manifold. The dimension of the left hand side gives:
\begin{align}
\text{dim}(\mathds{S}\mathds{P}_n) + \text{dim}(\overline{\mathds{R}}^{n})  &= \cfrac{n(n-1)}{2} +n \nonumber \\
&= \cfrac{n(n+1)}{2}= \text{dim}(\mathcal{S}_n). 
\end{align}

Using similar techniques as the ones in \thref{th4}, we can conclude that $\phi$ is a diffeomorphism whose inverse is ensured by the DAD algorithm. Finally, one can note that the projection onto the set of symmetric double stochastic matrices leaves $\mathbf{X} + \xi_{\mathbf{X}}$ unchanged for $\xi_{\mathbf{X}}$ in the neighborhood of $\mathbf{0}_{\mathbf{X}}$.

\section{Riemannian Hessian Computation} \label{app4}

Recall that the Riemannian Hessian is related to the Riemannian connection and Riemnanian gradient through the following equation:
\begin{align}
\text{hess }f(\mathbf{X})[\xi_{\mathbf{X}}] = \nabla_{\xi_{\mathbf{X}}} \text{grad }f(\mathbf{X}), \ \forall \ \xi_\mathbf{X} \in \mathcal{T}_\mathbf{X}\mathcal{M}.
\end{align}
Furthermore, the connection $\nabla_{\eta_{\mathbf{X}}} \xi_{\mathbf{X}}$ on the submanifold is given by the Levi-Civita connection $\overline{\nabla}_{\eta_{\mathbf{X}}} \xi_{\mathbf{X}}$ on $\mathds{R}^{n \times n}$ by $\nabla_{\eta_{\mathbf{X}}} \xi_{\mathbf{X}}= \Pi_\mathbf{X}(\overline{\nabla}_{\eta_{\mathbf{X}}} \xi_{\mathbf{X}})$. Substituting in the expression of the Riemannian Hessian yields:
\begin{align}
&\text{hess }f(\mathbf{X})[\xi_{\mathbf{X}}] = \Pi_\mathbf{X}\left(\text{D}(\text{grad }f(\mathbf{X}))[\xi_{\mathbf{X}}]\right) \nonumber \\
&\qquad \qquad - \cfrac{1}{2}\Pi_\mathbf{X}\left(  (\text{grad }f(\mathbf{X}) \odot \xi_{\mathbf{X}}) \oslash \mathbf{X}\right) \nonumber \\
&= \Pi_\mathbf{X}\left(\text{D}(\text{grad }f(\mathbf{X}))[\xi_{\mathbf{X}}]\right)  \\
&\qquad \qquad - \cfrac{1}{2}\Pi_\mathbf{X}\left(  (\Pi_\mathbf{X}(\text{Grad }f(\mathbf{X}) \odot \mathbf{X}) \odot \xi_{\mathbf{X}}) \oslash \mathbf{X}\right)\nonumber
\end{align}

Apart for the term $\text{D}(\text{grad }f(\mathbf{X}))[\xi_{\mathbf{X}}]$, all the other terms in the above equation are available. Therefore, one only needs to derive the expression of $\text{D}(\text{grad }f(\mathbf{X})[\xi_{\mathbf{X}}]$ to obtain the mapping from the Euclidean gradient and Hessian to their Riemannian counterpart. For ease of notation, the section uses the short notation $\dot{f}[\xi]$ to denote the directional derivative $\text{D}(f)[\xi]$ (also denoted by $\xi f$ in the Riemannian geometry community).The computation of the directional derivative of the Riemannian gradient uses the result of the following proposition:
\begin{proposition}
Let $f$ and $g$ be two matrix functions, i.e., $f,g: \mathds{R}^{n \times n} \longrightarrow \mathds{R}^{n \times n}$. The directional derivative of the Hamadard product $f\odot g$ and the matrix product $fg$ are given by:
\begin{align}
\text{D}(f\odot g)[\xi] &=   \dot{f}[\xi] \odot g + f \odot \dot{g}[\xi] \\
\text{D}(fg)[\xi] &=   \dot{f}[\xi] g + f  \dot{g}[\xi]
\end{align}
\end{proposition}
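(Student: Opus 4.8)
The plan is to reduce both Leibniz-type identities to the elementary product rule for scalar functions by working entrywise; the standing assumption, implicit in the notation $\dot f[\xi]$, is that $f$ and $g$ are differentiable at the base point $\mathbf{X}$. First I would recall that, by the definition $\text{D}(h)[\xi] = \lim_{t\to 0} \frac{h(\mathbf{X}+t\xi)-h(\mathbf{X})}{t}$ with the limit taken componentwise, one has $\left(\text{D}(h)[\xi]\right)_{ij} = \text{D}(h_{ij})[\xi]$ for every matrix map $h$ and every index pair $(i,j)$. Hence it suffices to verify each of the two identities one entry at a time and then collect the $n^2$ resulting scalar identities into a single matrix identity.

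For the Hadamard product, the $(i,j)$ entry of $f\odot g$ is the scalar function $\mathbf{X}\mapsto f_{ij}(\mathbf{X})\,g_{ij}(\mathbf{X})$, so the ordinary product rule gives $\text{D}(f_{ij}g_{ij})[\xi] = (\dot f[\xi])_{ij}\,g_{ij} + f_{ij}\,(\dot g[\xi])_{ij}$, which is precisely the $(i,j)$ entry of $\dot f[\xi]\odot g + f\odot\dot g[\xi]$. For the matrix product, the $(i,j)$ entry of $fg$ is $\mathbf{X}\mapsto \sum_{k=1}^{n} f_{ik}(\mathbf{X})\,g_{kj}(\mathbf{X})$; using linearity of the derivative together with the scalar product rule applied termwise yields $\text{D}\!\left(\sum_{k} f_{ik}g_{kj}\right)[\xi] = \sum_{k}\left((\dot f[\xi])_{ik}\,g_{kj} + f_{ik}\,(\dot g[\xi])_{kj}\right)$, which is the $(i,j)$ entry of $\dot f[\xi]\,g + f\,\dot g[\xi]$. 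This establishes both formulas.

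A coordinate-free variant, closer to the style adopted elsewhere in the paper, would instead substitute the first-order expansions $f(\mathbf{X}+t\xi) = f(\mathbf{X}) + t\,\dot f[\xi] + o(t)$ and $g(\mathbf{X}+t\xi) = g(\mathbf{X}) + t\,\dot g[\xi] + o(t)$ into $f(\mathbf{X}+t\xi)\odot g(\mathbf{X}+t\xi)$ (respectively into $f(\mathbf{X}+t\xi)\,g(\mathbf{X}+t\xi)$), expand using the bilinearity of $\odot$ (respectively of matrix multiplication), cancel the zeroth-order term, divide by $t$, and let $t\to 0$. The only point deserving a word of justification is that the quadratic cross term and the remainder contributions are $o(t)$, which holds because every bilinear map between finite-dimensional normed spaces is bounded; beyond that the argument is entirely routine, and I do not anticipate any genuine obstacle.
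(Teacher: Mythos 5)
Your proposal is correct, but it is worth noting that the paper does not actually prove this proposition at all: its ``proof'' consists of a single sentence deferring to a standard reference on matrix identities and differentiation (the Matrix Cookbook). So there is nothing in the paper to compare your argument against step by step; you have supplied a proof where the authors supplied a citation. Your entrywise reduction is the natural way to do it: since the directional derivative $\text{D}(h)[\xi]=\lim_{t\to 0}\frac{h(\mathbf{X}+t\xi)-h(\mathbf{X})}{t}$ is computed componentwise in a finite-dimensional space, the identity $\left(\text{D}(h)[\xi]\right)_{ij}=\text{D}(h_{ij})[\xi]$ is immediate, and both formulas then collapse to the scalar product rule applied to $f_{ij}g_{ij}$ and to $\sum_k f_{ik}g_{kj}$ respectively. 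Your coordinate-free variant via first-order expansions and boundedness of bilinear maps is equally valid and arguably closer in spirit to how the paper manipulates directional derivatives elsewhere (e.g.\ in the Hessian computations of Appendix D, where exactly these Leibniz rules are invoked for $\gamma=\text{Grad }f(\mathbf{X})\odot\mathbf{X}$ and for products like $\mathbf{X}\mathbf{X}^T$). The only implicit hypothesis you correctly flag is differentiability of $f$ and $g$ at the base point, which the paper also assumes tacitly. No gaps.
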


\begin{proof}
The matrices identities and differentiation, including the above identities, are summarized in the following reference \cite{Petersen06thematrix}.
\end{proof}

The next subsections derive such directional derivative for the double stochastic and the symmetric multinomial manifold to derive the final expression of the Riemannian Hessian. In both subsections, let $\gamma$ denote $\text{Grad }f(\mathbf{X}) \odot \mathbf{X}$.
    
\subsection{Proof of \thref{th6}}

Recall that the projection on the set of doubly stochastic multinomial manifold is given by:
\begin{align}
\Pi_\mathbf{X}(\mathbf{Z}) &= \mathbf{Z} - (\alpha \mathbf{1}^T + \mathbf{1} \beta^T ) \odot \mathbf{X} \nonumber \\
\alpha &= (\mathbf{I} - \mathbf{X}\mathbf{X}^T)^{\dagger}(\mathbf{Z} - \mathbf{X}\mathbf{Z}^T)\mathbf{1} \nonumber \\
\beta &= \mathbf{Z}^T\mathbf{1} - \mathbf{X}^{T}\alpha
\end{align}
Therefore, the directional derivative can be expressed as:
\begin{align}
&\text{D}(\text{grad }f(\mathbf{X}))[\xi_{\mathbf{X}}] = \text{D}(\Pi_\mathbf{X}(\gamma)[\xi_{\mathbf{X}}] \nonumber \\
&= \text{D}(\gamma-(\alpha \mathbf{1}^T + \mathbf{1} \beta^T ) \odot \mathbf{X})[\xi_{\mathbf{X}}] \nonumber \\
&= \text{D}(\gamma)[\xi_{\mathbf{X}}]-\text{D}((\alpha \mathbf{1}^T + \mathbf{1} \beta^T ) \odot \mathbf{X})[\xi_{\mathbf{X}}] \nonumber \\
&= \dot{\gamma}[\xi_{\mathbf{X}}] -(\dot{\alpha}[\xi_{\mathbf{X}}] \mathbf{1}^T + \mathbf{1} \dot{\beta}^T[\xi_{\mathbf{X}}] ) \odot \mathbf{X} \nonumber \\
&\qquad \qquad- (\alpha \mathbf{1}^T + \mathbf{1} \beta^T ) \odot \xi_{\mathbf{X}}
\end{align}
with
\begin{itemize}
\item $\dot{\gamma}[\xi_{\mathbf{X}}] = \text{D}(\gamma)[\xi_{\mathbf{X}}]$ can be expressed as:
\begin{align}
\dot{\gamma}[\xi_{\mathbf{X}}] &= \text{D}(\text{Grad }f(\mathbf{X}))[\xi_{\mathbf{X}}] \odot \mathbf{X} + \text{Grad }f(\mathbf{X}) \odot \xi_{\mathbf{X}}  \nonumber \\
&= \text{Hess }f(\mathbf{X})[\xi_{\mathbf{X}}]\odot \mathbf{X} + \text{Grad }f(\mathbf{X}) \odot \xi_{\mathbf{X}} \nonumber
\end{align}
\item $\dot{\alpha}[\xi_{\mathbf{X}}] =  \text{D}(\alpha)[\xi_{\mathbf{X}}]$ can be computed as follows:
\begin{align}
\dot{\alpha}[\xi_{\mathbf{X}}] &=  \text{D}((\mathbf{I} - \mathbf{X}\mathbf{X}^T)^{\dagger}(\gamma - \mathbf{X}\gamma^T)\mathbf{1})[\xi_{\mathbf{X}}]  \\
&=  \text{D}((\mathbf{I} - \mathbf{X}\mathbf{X}^T)^{\dagger})[\xi_{\mathbf{X}}](\gamma - \mathbf{X}\gamma^T)\mathbf{1} \nonumber \\
& \qquad + (\mathbf{I} - \mathbf{X}\mathbf{X}^T)^{\dagger}(\dot{\gamma}[\xi_{\mathbf{X}}]-\xi_{\mathbf{X}}\gamma-\mathbf{X}\dot{\gamma}^T[\xi_{\mathbf{X}}])\mathbf{1}\nonumber,
\end{align}
with the term $\text{D}((\mathbf{I} - \mathbf{X}\mathbf{X}^T)^{\dagger})[\xi_{\mathbf{X}}]$ being derived below.
\item $\dot{\beta}[\xi_{\mathbf{X}}] =  \text{D}(\beta)[\xi_{\mathbf{X}}]$ can be computed as follows:
\begin{align}
\dot{\beta}[\xi_{\mathbf{X}}] &= \text{D}(\gamma^T\mathbf{1} - \mathbf{X}^{T}\alpha)[\xi_{\mathbf{X}}] \nonumber \\
&= \dot{\gamma}^T[\xi_{\mathbf{X}}]\mathbf{1} - \xi_{\mathbf{X}}^T\alpha- \mathbf{X}^{T}\dot{\alpha}[\xi_{\mathbf{X}}]
\end{align}
\end{itemize}

In order to compute $\text{D}((\mathbf{I} - \mathbf{X}\mathbf{X}^T)^{\dagger})[\xi_{\mathbf{X}}]$, first introduce the following lemma:
\begin{lemma}
Let $\mathbf{A}$ be an $n \times n$ matrix with a left pseudo inverse $\mathbf{A}^\dagger$. The left pseudo inverse of $(\mathbf{A}+\mathbf{B}\mathbf{C})$ is given by:
\begin{align}
(\mathbf{A}+\mathbf{B}\mathbf{C})^\dagger = \mathbf{A}^\dagger - \mathbf{A}^\dagger\mathbf{B}(\mathbf{I}+\mathbf{C}\mathbf{A}^\dagger\mathbf{B})^\dagger \mathbf{C} \mathbf{A}^\dagger
\end{align}
\end{lemma}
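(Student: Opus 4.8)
The plan is to prove the identity by direct verification: I would show that the matrix on the right-hand side is a left inverse of $\mathbf{A}+\mathbf{B}\mathbf{C}$, which suffices since here $(\cdot)^\dagger$ records a left inverse. Write $\mathbf{S}=\mathbf{I}+\mathbf{C}\mathbf{A}^\dagger\mathbf{B}$ and $\mathbf{M}=\mathbf{A}^\dagger-\mathbf{A}^\dagger\mathbf{B}\mathbf{S}^\dagger\mathbf{C}\mathbf{A}^\dagger$. Before computing I would record the two one-sided identities that drive everything: $\mathbf{A}^\dagger\mathbf{A}=\mathbf{I}$ holds by hypothesis, and $\mathbf{S}^\dagger\mathbf{S}=\mathbf{I}$ holds as soon as $\mathbf{S}$ has full column rank (in the application to \thref{th6} the relevant $\mathbf{S}$ is a small square invertible matrix, so this is automatic). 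It is worth noting that the claimed formula is precisely the pseudo-inverse analogue of the Sherman--Morrison--Woodbury identity, which is a convenient check on the signs and factors.

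Next I would expand $\mathbf{M}(\mathbf{A}+\mathbf{B}\mathbf{C})$ term by term. Using $\mathbf{A}^\dagger\mathbf{A}=\mathbf{I}$, the four resulting terms combine to
\begin{align}
\mathbf{M}(\mathbf{A}+\mathbf{B}\mathbf{C}) = \mathbf{I} + \mathbf{A}^\dagger\mathbf{B}\mathbf{C} - \mathbf{A}^\dagger\mathbf{B}\mathbf{S}^\dagger\mathbf{C} - \mathbf{A}^\dagger\mathbf{B}\mathbf{S}^\dagger(\mathbf{C}\mathbf{A}^\dagger\mathbf{B})\mathbf{C}. \nonumber
\end{align}
The single manipulation that makes this collapse is the substitution $\mathbf{C}\mathbf{A}^\dagger\mathbf{B}=\mathbf{S}-\mathbf{I}$, which gives $\mathbf{S}^\dagger(\mathbf{C}\mathbf{A}^\dagger\mathbf{B})=\mathbf{S}^\dagger\mathbf{S}-\mathbf{S}^\dagger=\mathbf{I}-\mathbf{S}^\dagger$; feeding this into the last term yields $-\mathbf{A}^\dagger\mathbf{B}(\mathbf{I}-\mathbf{S}^\dagger)\mathbf{C}=-\mathbf{A}^\dagger\mathbf{B}\mathbf{C}+\mathbf{A}^\dagger\mathbf{B}\mathbf{S}^\dagger\mathbf{C}$, after which the four non-identity terms cancel in pairs and one is left with $\mathbf{M}(\mathbf{A}+\mathbf{B}\mathbf{C})=\mathbf{I}$.

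The point to be careful about is the one-sidedness of all the inverse relations: the computation only ever uses $\mathbf{A}^\dagger\mathbf{A}=\mathbf{I}$ and $\mathbf{S}^\dagger\mathbf{S}=\mathbf{I}$ (never $\mathbf{A}\mathbf{A}^\dagger$ or $\mathbf{S}\mathbf{S}^\dagger$), so it must be arranged so that each $\mathbf{A}^\dagger$ meets an $\mathbf{A}$ to its right and each $\mathbf{S}^\dagger$ meets an $\mathbf{S}$ to its right — which is exactly why replacing $\mathbf{C}\mathbf{A}^\dagger\mathbf{B}$ by $\mathbf{S}-\mathbf{I}$ (rather than pulling an $\mathbf{S}$ out on the left of $\mathbf{S}^\dagger$) is the right move. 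I would also insert one sentence making explicit the hypothesis needed for $\mathbf{S}^\dagger\mathbf{S}=\mathbf{I}$, namely that $\mathbf{S}$ have full column rank; this is the only genuinely non-mechanical point, and it is what one must check in context when applying the lemma.
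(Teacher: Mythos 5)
Your proposal is correct and is essentially the paper's own proof: both verify the identity by left-multiplying the claimed expression by $\mathbf{A}+\mathbf{B}\mathbf{C}$, applying $\mathbf{A}^\dagger\mathbf{A}=\mathbf{I}$, and collapsing the remaining terms via $(\mathbf{I}+\mathbf{C}\mathbf{A}^\dagger\mathbf{B})^\dagger(\mathbf{I}+\mathbf{C}\mathbf{A}^\dagger\mathbf{B})=\mathbf{I}$. Your only addition is making explicit the hypothesis that $\mathbf{S}=\mathbf{I}+\mathbf{C}\mathbf{A}^\dagger\mathbf{B}$ admits a left inverse, which the paper uses silently in its final step; that is a worthwhile clarification but not a different argument.
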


\begin{proof}
The above identity is similar to the Kailath variant of Sherman-Morrison-Woodbury formula \cite{57151854} for an invertible matrix $\mathbf{A}$. The proof is given by a simple left multiplication as follows:
\begin{align}
&(\mathbf{A}+\mathbf{B}\mathbf{C})^\dagger(\mathbf{A}+\mathbf{B}\mathbf{C}) = \mathbf{A}^\dagger\mathbf{A} - \mathbf{A}^\dagger\mathbf{B}(\mathbf{I}+\mathbf{C}\mathbf{A}^\dagger\mathbf{B})^\dagger \mathbf{C} \mathbf{A}^\dagger\mathbf{A} \nonumber \\
& \qquad \qquad + \mathbf{A}^\dagger \mathbf{B}\mathbf{C} - \mathbf{A}^\dagger\mathbf{B}(\mathbf{I}+\mathbf{C}\mathbf{A}^\dagger\mathbf{B})^\dagger \mathbf{C} \mathbf{A}^\dagger\mathbf{B}\mathbf{C}  \\
& = \mathbf{I} - \mathbf{A}^\dagger \mathbf{B}((\mathbf{I}+\mathbf{C}\mathbf{A}^\dagger\mathbf{B})^\dagger(\mathbf{I}+\mathbf{C}\mathbf{A}^\dagger\mathbf{B})\mathbf{C}-\mathbf{C} ) = \mathbf{I}\nonumber
\end{align}
\end{proof}

Using the identity above, the pseudo inverse of the perturbed $(\mathbf{I} - \mathbf{X}\mathbf{X}^T)^{\dagger}$ along $\xi_{\mathbf{X}}$ is given by:
\begin{align}
&(\mathbf{I} - (\mathbf{X}+t\xi_{\mathbf{X}})(\mathbf{X}+t\xi_{\mathbf{X}})^T)^{\dagger} = \mathbf{A}[\xi_{\mathbf{X}}]^\dagger \nonumber \\
&\qquad \qquad= \mathbf{A}^\dagger + t\mathbf{A}^\dagger(\mathbf{I} - t\mathbf{C}\mathbf{A}^\dagger)^\dagger \mathbf{C} \mathbf{A}^\dagger
\end{align}
wherein $\mathbf{A} = \mathbf{I} - \mathbf{X}\mathbf{X}^T$, $\mathbf{B} = -t$, and $\mathbf{C} = \mathbf{X}\xi_{\mathbf{X}}^T + \xi_{\mathbf{X}} \mathbf{X}^T + t\xi_{\mathbf{X}}\xi_{\mathbf{X}}^T$ in the above inversion lemma. Therefore, the directional derivative can obtained by:
\begin{align}
\text{D}((\mathbf{I} - \mathbf{X}\mathbf{X}^T)^{\dagger})&[\xi_{\mathbf{X}}] = \lim_{t \rightarrow 0} \cfrac{\mathbf{A}[\xi_{\mathbf{X}}]^\dagger-\mathbf{A}^\dagger}{t} \nonumber \\
&= \lim_{t \rightarrow 0} \cfrac{t\mathbf{A}^\dagger(\mathbf{I}-t\mathbf{C}\mathbf{A}^\dagger)^\dagger \mathbf{C} \mathbf{A}^\dagger}{t} \nonumber \\
&= \lim_{t \rightarrow 0} \mathbf{A}^\dagger(\mathbf{I}-t\mathbf{C}\mathbf{A}^\dagger)^\dagger \mathbf{C} \mathbf{A}^\dagger \nonumber \\
&=  \mathbf{A}^\dagger(\lim_{t \rightarrow 0}\mathbf{C}) \mathbf{A}^\dagger  \\
&= (\mathbf{I} - \mathbf{X}\mathbf{X}^T)^\dagger (\mathbf{X}\xi_{\mathbf{X}}^T + \xi_{\mathbf{X}} \mathbf{X}^T)(\mathbf{I} - \mathbf{X}\mathbf{X}^T)^\dagger \nonumber
\end{align}

\subsection{Proof of \corref{cor3}}

The proof of this corollary follows similar steps as the one of \thref{th6}. Recall that the projection on the symmetric multinomial manifold is given by:
\begin{align}
\Pi_\mathbf{X}(\mathbf{Z}) &= \mathbf{Z} - (\alpha \mathbf{1}^T + \mathbf{1} \alpha^T ) \odot \mathbf{X} \nonumber \\
\alpha &= (\mathbf{I} + \mathbf{X})^{-1}\mathbf{Z}\mathbf{1}.
\end{align}
Therefore, using a technique similar to \thref{th6}, the directional derivative can be expressed as:
\begin{align}
&\text{D}(\text{grad }f(\mathbf{X}))[\xi_{\mathbf{X}}] = \dot{\gamma}[\xi_{\mathbf{X}}] -(\dot{\alpha}[\xi_{\mathbf{X}}] \mathbf{1}^T + \mathbf{1} \dot{\alpha}^T[\xi_{\mathbf{X}}] ) \odot \mathbf{X} \nonumber \\
&\qquad \qquad- (\alpha \mathbf{1}^T + \mathbf{1} \alpha^T ) \odot \xi_{\mathbf{X}}
\end{align}
with $\dot{\gamma}[\xi_{\mathbf{X}}] = \text{Hess }f(\mathbf{X})[\xi_{\mathbf{X}}]\odot \mathbf{X} + \text{Grad }f(\mathbf{X}) \odot \xi_{\mathbf{X}}$. The computation of the directional derivative of $\alpha$ requires differentiating $(\mathbf{I} + \mathbf{X})^{-1} = \mathbf{A}^{-1}$. Using the Kailath variant of Sherman-Morrison-Woodbury formula \cite{57151854}, the inverse is given by:
\begin{align}
(\mathbf{I} + \mathbf{X} + t\xi_{\mathbf{X}})^{-1} = \mathbf{A}^{-1}-\mathbf{A}^{-1}t(\mathbf{I}+t\xi_{\mathbf{X}}\mathbf{A}^{-1})^{-1}\xi_{\mathbf{X}}\mathbf{A}^{-1}.
\end{align}
Therefore, the directional derivative can be expressed as:
\begin{align}
\text{D}((\mathbf{I} + \mathbf{X})^{-1})[\xi_{\mathbf{X}}] &= \lim_{t \rightarrow 0} \cfrac{\mathbf{A}[\xi_{\mathbf{X}}]^{-1}-\mathbf{A}^{-1}}{t} \nonumber \\
&= \lim_{t \rightarrow 0} \cfrac{-\mathbf{A}^{-1}t(\mathbf{I}+t\xi_{\mathbf{X}}\mathbf{A}^{-1})^{-1}\xi_{\mathbf{X}}\mathbf{A}^{-1}}{t}\nonumber \\
&= -(\mathbf{I} + \mathbf{X})^{-1}\xi_{\mathbf{X}}(\mathbf{I} + \mathbf{X})^{-1}
\end{align}
Hence, we obtain:
\begin{align}
\dot{\alpha}[\xi_{\mathbf{X}}] = \left((\mathbf{I} + \mathbf{X})^{-1}\dot{\gamma}[\xi_{\mathbf{X}}] -(\mathbf{I} + \mathbf{X})^{-1}\xi_{\mathbf{X}}(\mathbf{I} + \mathbf{X})^{-1}\gamma\right)\mathbf{1}. \nonumber
\end{align}

\bibliographystyle{IEEEtran}
\bibliography{citations}

\end{document}